\newcommand{\Eins}{\mathbbm{1}}
\theoremstyle{plain}
\newtheorem{thmAMS}{Theorem}[section]
\newtheorem{lemAMS}[thmAMS]{Lemma}
\newtheorem{propAMS}[thmAMS]{Proposition}
\newtheorem{corAMS}[thmAMS]{Corollary}
\theoremstyle{definition}
\newtheorem{defnAMS}[thmAMS]{Definition}
\newtheorem{exmpAMS}[thmAMS]{Example}
\newtheorem{remAMS}[thmAMS]{Remark}
\newtheorem{clmAMS}[thmAMS]{Claim}
\newenvironment{thm}{\vspace{3pt}\begin{thmAMS}}{\end{thmAMS}}
\newenvironment{lem}{\vspace{3pt}\begin{lemAMS}}{\end{lemAMS}}
\newenvironment{prop}{\vspace{3pt}\begin{propAMS}}{\end{propAMS}}
\newenvironment{cor}{\vspace{3pt}\begin{corAMS}}{\end{corAMS}}
\newenvironment{defn}{\vspace{3pt}\begin{defnAMS}}{\end{defnAMS}}
\newenvironment{exmp}{\vspace{3pt}\begin{exmpAMS}}{\end{exmpAMS}}
\newenvironment{rem}{\vspace{3pt}\begin{remAMS}}{\end{remAMS}}
\newenvironment{prf}{\begin{proof}}{\end{proof}\vspace{3pt}}
\newcommand{\TAG}[1]{\addtocounter{thmAMS}{1}\tag{\textbf{\thethmAMS}}\label{#1}}
\newcommand{\refeqn}[1]{\hyperref[#1]{Equation (\ref*{#1})}}
\newcommand{\refX}[2]{\hyperref[#2]{#1~\ref*{#2}}}
\newcommand{\refXohneNum}[2]{\hyperref[#2]{#1}}
\newcommand{\refXX}[3]{\hyperref[#2]{#1~\ref*{#2}#3}}
\newcommand{\refNumX}[2]{\hyperref[#1]{\ref*{#1}#2}}
\newcommand{\refKlammern}[1]{\hyperref[#1]{(\ref*{#1})}}
\newcommand{\refKlammernX}[2]{\hyperref[#1]{(\ref*{#1}#2)}}
\newcommand{\refsec}[1]{\hyperref[#1]{Section~\ref*{#1}}}
\newcommand{\refchap}[1]{\hyperref[#1]{Chapter~\ref*{#1}}}
\newcommand{\refthm}[1]{\hyperref[#1]{Theorem~\ref*{#1}}}
\newcommand{\refthmX}[2]{\hyperref[#1]{Theorem~\ref*{#1}#2}}
\newcommand{\reflem}[1]{\hyperref[#1]{Lemma~\ref*{#1}}}
\newcommand{\reflemX}[2]{\hyperref[#1]{Lemma~\ref*{#1}#2}}
\newcommand{\refprop}[1]{\hyperref[#1]{Proposition~\ref*{#1}}}
\newcommand{\refpropX}[2]{\hyperref[#1]{Proposition~\ref*{#1}#2}}
\newcommand{\refcor}[1]{\hyperref[#1]{Corollary~\ref*{#1}}}
\newcommand{\refcorX}[2]{\hyperref[#1]{Corollary~\ref*{#1}#2}}
\newcommand{\refdefn}[1]{\hyperref[#1]{Definition~\ref*{#1}}}
\newcommand{\refdefnX}[2]{\hyperref[#1]{Definition~\ref*{#1}#2}}
\newcommand{\refrem}[1]{\hyperref[#1]{Remark~\ref*{#1}}}
\newcommand{\refremX}[2]{\hyperref[#1]{Remark~\ref*{#1}#2}}
\newcommand{\refexmp}[1]{\hyperref[#1]{Example~\ref*{#1}}}
\newcommand{\refexmpX}[2]{\hyperref[#1]{Example~\ref*{#1}#2}}
\newcommand{\refclm}[1]{\hyperref[#1]{Claim~\ref*{#1}}}
\newcommand{\refclmX}[2]{\hyperref[#1]{Claim~\ref*{#1}#2}}
\newcommand{\skal}[2]{\langle #1, #2 \rangle} 
\newcommand{\norm}[1]{\lVert #1 \rVert} 
\newcommand{\abs}[1]{\left\vert #1 \right\vert} 
\newcommand{\set}[1]{\left\{#1\right\}} 
\newcommand{\Set}[2]{\left\{#1\ \middle\vert\ #2 \right\}} 
\newcommand{\clSPAN}[2]{\ensuremath{\overline{\mathrm{span\vphantom{k}}}\left\{#1\ \middle\vert\ #2 \right\}}} 
\newcommand{\TO}{\longrightarrow}
\newcommand{\folgt}{\Rightarrow}
\newcommand{\eqvt}{\Leftrightarrow}
\newcommand{\Geht}{\xrightarrow} 
\newcommand{\hr}[1]{\mathscr{#1}}
\newcommand{\hrB}{\hr{B}}
\newcommand{\hrH}{\hr{H}}
\newcommand{\hrK}{\hr{K}}
\newcommand{\hrT}{\hr{T}}
\newcommand{\BH}{\hrB(\hrH)}
\newcommand{\KH}{\hrK(\hrH)}
\renewcommand{\TH}{\hrT(\hrH)} 
\newcommand{\trnorm}[1]{\lVert #1 \rVert_{\tr}} 
\newcommand{\cA}{\mathcal{A}}
\newcommand{\cF}{\mathcal{F}}
\newcommand{\cG}{\mathcal{G}}
\newcommand{\cP}{\mathcal{P}}
\newcommand{\Zst}{\mathfrak{S}}
\newcommand{\fix}[1]{\cF(#1)}
\newcommand{\linfty}{\ell^\infty}
\newcommand{\Leins}{\mathrm{L}^1}
\newcommand{\Lzwei}{\mathrm{L}^2}
\newcommand{\Linfty}{\mathrm{L}^\infty}
\newcommand{\CC}{\mathbb{C}}
\newcommand{\NN}{\mathbb{N}}
\newcommand{\RR}{\mathbb{R}}
\newcommand{\TT}{\mathbb{T}}
\newcommand{\ZZ}{\mathbb{Z}}
\newcommand{\st}{\ensuremath{\mathrm{stop}}}
\DeclareMathOperator{\tr}{tr}
\DeclareMathOperator{\id}{Id}
\DeclareMathOperator{\supp}{supp}
\DeclareMathOperator{\co}{co}
\DeclareMathOperator{\Span}{span}
\DeclareMathOperator*{\wslim}{\ensuremath{\mathrm{w*-lim}}}
\DeclareMathOperator*{\stlim}{\st-lim}
\newcommand{\eps}{\varepsilon}
\newcommand{\Fi}{\varphi}
\newcommand{\ro}{\varrho}
\newcommand{\ChFkt}{\chi}
\newcommand{\ptr}{p_{\mathrm{Tr}}}
\newcommand{\ppr}{p_{\mathrm{R}}}
\newcommand{\pnr}{p_{\mathrm{R}_0}}
\newcommand{\cAtr}{\cA_{\mathrm{Tr}}}
\newcommand{\cApr}{\cA_{\mathrm{R}}}
\newcommand{\cApot}{\cA_{\mathrm{pot}}}
\newcommand{\Tpr}{T_{\mathrm{R}}}
\begin{document}

\title{\vspace*{-4ex}A Coherent Approach to\\[-0.4ex] Recurrence and Transience for\\[-0.4ex] Quantum Markov Operators}
\author{%
Andreas G\"{a}rtner\thanks{E-mail address: gaertner@mathematik.tu-darmstadt.de}\;\quad and\quad %
Burkhard K\"{u}mmerer\thanks{E-mail address: kuemmerer@mathematik.tu-darmstadt.de}\\%
\begin{small}Fachbereich Mathematik, Technische Universit\"{a}t Darmstadt,\end{small}\\[-1.4ex]%
\begin{small}Schlo{\ss}gartenstr.~7, 64289 Darmstadt, Germany\end{small}%
}
\date{November 29, 2012}

\maketitle

\begin{abstract}\vspace*{-4ex}
	We present a coherent approach to recurrence and transience, starting from a version of the Riesz decomposition theorem for superharmonic elements.
	Our approach allows straightforward proofs of some known results, entails new theorems, and has applications to other aspects of completely positive operators: It leads to a classification of idempotent Markov operators, thereby identifying concretely the Choi-Effros product, which can be introduced on the range of these maps. We obtain an abstract Poisson integral and a representation theorem for idempotent entanglement breaking channels.
\end{abstract}

\section{Introduction}\label{sec:intro}

In the present paper we investigate the long term behavior of discrete time Quantum Markov Processes by characterizing recurrent and transient parts in terms of the corresponding transition operators.
In classical probability theory the notions of recurrence and transience provide a fundamental tool in the study of Markov processes. However, due to the lack of points in the state space of a quantum Markov process, these notions do not allow an immediate and unique generalization to the non-commutative situation, and thus their study is still in its infancy.

In the following we present a coherent approach to non-commutative versions of these notions for discrete time quantum Markov processes in terms of the corresponding transition operators, thereby incorporating several partial results scattered around the literature.
Inspired by the work of F. Fagnola, R. Rebolledo, and V. Umanit\`{a} (cf.\ \cite{fr}, \cite{fag}, \cite{uma}) and F. Haag (cf.\ \cite{flo}) we use notions of classical (probabilistic) potential theory (see, e.g., \cite{rev} or \cite{dm}) to define potentials, transient projections, and recurrent projections for quantum Markov operators;  positive recurrent projections are defined via support projections of stationary normal states of such operators (see, e.g., \cite{ehk}, \cite{fv}, \cite{groh}, \cite{luc}, \cite{fag}, or \cite{uma}).

Different notions of recurrence and transience have been studied, for example, in \cite{acc}, \cite{moh}, \cite{rz}, or \cite{gvww}.

At the starting point of our approach stands a non-commutative version of the Riesz decomposition theorem, which we put to use several times throughout this paper. After having explored the basic notions of recurrence and transience under various aspects, it turns out that our results have applications to idempotent Markov operators, to the Choi-Effros product (cf.\ \cite{ce}), and to entanglement breaking channels (cf.\ \cite{hol}, \cite{hsr}). We also draw a connection between the theory of non-commutative Poisson boundaries (cf.\ \cite{izugrp}, \cite{izu}) and weak* mean ergodicity (see, e.g., \cite{kn}).

The paper is organized as follows: Apart from notation and preliminaries, \refsec{sec:prelim} contains a brief revision of classical definitions in their algebraic reformulation. In \refsec{sec:potentials} we define potentials, study superharmonic elements and projections, and  obtain a version of the Riesz decomposition theorem,which is the starting point of our approach. Finally, we show that the set of superharmonic projections is a complete lattice. Transient projections are defined and investigated in \refsec{sec:transProj}. In particular, for Markov operators on the algebra $\BH$, we obtain various characterizations of such projections.

In \refsec{sec:rec-proj} we direct our attention to recurrent projections and show that superharmonic elements are fixed on recurrent parts. To emphasize the differences to the classical case, we also define skew recurrent projections, which coincide with positive recurrent projections if the considered algebra is commutative. Finally, we employ a theorem of F. Haag to study reformulations of classical criteria for positive recurrence. \refsec{sec:finite} examines the finite dimensional case, where further characterizations of transient projections are obtained.

In \refsec{sec:Projections} we apply the previous results to the study of idempotent Markov operators: our analysis of recurrent and transient projections leads to a structure theorem of such maps and allows to put the Choi-Effros product into more concrete terms.
\refsec{sec:poisson} deals with non-commutative Poisson boundaries and establishes a new characterization of weak* mean ergodic Markov operators, for which we obtain an abstract Poisson integral. We also show that such operators never have null recurrent projections.

Finally, in \refsec{sec:eb} we include an application to entanglement breaking channels. We show that for a Markov operator with Holevo representation any two projections in the fixed space commute,  and we determine the idempotent Markov operators which admit a Holevo representation.

\section{Notation and Preliminaries}\label{sec:prelim}

Throughout this paper $\hrH$ denotes a Hilbert space (with scalar product $\skal{\,\cdot\,}{\,\cdot\,}$ linear in the first component) and $\BH$ the algebra of all bounded linear operators on $\hrH$. If $x\in\BH$ is positive, we write $x\geq0$ and we denote by $\BH_+$ the set of all positive elements. The ideals of all trace class operators and all compact operators in $\BH$ are denoted by $\TH$ and $\KH$, respectively. For $x\in\TH$ the trace norm is given by $\trnorm{x}:=\tr(\sqrt{x^*x})$. The strong operator topology is referred to as \st. Whenever we discuss infinite sums in $\BH$, we consider convergence in this topology.
The rank one operators $t_\xi:\hrH\to\hrH:\eta\mapsto\skal{\eta}{\xi}\xi$ in $\BH$ will appear several times in our
discussions. We write $\Eins_\hrH$ or simply $\Eins$ for the identity operator on $\hrH$.

The set of all \emph{normal states} on the algebra $\BH$, i.e.\ the set of all linear functionals $\Fi$, for which there is a (unique) \emph{density operator} $\rho\in\TH$ with $\rho\geq 0$ and $\tr(\rho)=1$ such that $\Fi(x)=\tr(\rho x)$, is denoted by $\Zst$.

More generally, we consider a \emph{von Neumann algebra} $\cA\subseteq\BH$, i.e.\ a *-algebra of operators on $\hrH$, which is closed in the strong operator topology. We always assume $\Eins_\hrH\in\cA$. As for $\BH$, the cone of all $0\leq x\in\cA$ is denoted by $\cA_+$. By $\Zst(\cA)$ we denote the set of all \emph{normal states on $\cA$}, i.e.\ the set of all linear functionals $\Fi$ which can be implemented by a density operator $\rho\in\TH$, which is, however, not unique in this more general case. A normal state $\Fi\in\Zst(\cA)$ is \emph{faithful} if $\Fi(x^*x)=0$ implies $x=0$. The von Neumann algebra $\cA$ is called \emph{$\sigma$-finite} if there exists a faithful normal state on $\cA$.

For an orthogonal projection $p\in\cA$ its orthogonal complement $\Eins-p$ is denoted by $p^\bot$. If $\cP$ is any set of orthogonal projections in $\cA$ then $\bigvee\set{p\in\cP}$ stands for the supremum of this set, i.e.\ the smallest projection $q\in\cA$ such that $p\leq q$ for all $p\in\cP$. For a bounded increasing net $(a_i)_{i\in I}\subseteq\cA_+$ the supremum $\bigvee_{i\in I}a_i$ is defined likewise.
If $a$ is a self-adjoint element in a von Neumann algebra $\cA$ then $\supp a$ denotes its support projection, i.e.\ the smallest projection $p\in\cA$ such that $a=pap$. Similarly, for a normal state $\Fi$ on $\cA$ the support $\supp\Fi$ is the smallest projection $p\in\cA$ such that $\Fi(x)=\Fi(pxp)$ for all $x\in\cA$.

For the mathematical background of these notations we refer to \cite{mur} or \cite{tak}.

A map $T:\cA\to\cA$ is \emph{positive} if $T(\cA_+)\subseteq\cA_+$ and \emph{completely positive} if the map $\id_n\otimes T:M_n\otimes\cA\to M_n\otimes\cA$ is positive for every $n\in\NN$, where $\id_n$ is the identity map on $M_n$. If $T(\Eins)=\Eins$, it is called \emph{unital}. Such a map $T$ is \emph{normal} if $T\left(\bigvee_{i\in I}a_i\right)=\bigvee_{i\in I}T(a_i)$ for every bounded increasing net $(a_i)_{i\in I}\subseteq\cA_+$ or, equivalently, if $\Fi\circ T$ is normal for every normal state $\Fi$. In this case the map $\Zst(\cA)\ni\Fi\mapsto\Fi\circ T\in\Zst(\cA)$ is denoted by $T_*$ and called the \emph{pre-adjoint of $T$}. A normal completely positive unital map $T$ on $\cA$ is called a \emph{(quantum) Markov operator}. Its fixed space is denoted by $\fix T:=\Set{x\in\cA}{T(x)=x}$; a normal state $\Fi$ is called \emph{stationary} if $\Fi\circ T=\Fi$.

Finally, a positive element $a\in\cA_+$ is called \emph{subharmonic (w.r.t.\ $T$)} if $T(a)\geq a$; if, on the other hand, $T(a)\leq a$ then $a$ is called \emph{superharmonic}. It is elementary to see that the support of a normal \emph{stationary} state is subharmonic. Indeed, if $\Fi\circ T=\Fi\in\Zst(\cA)$ and $p:=\supp\Fi$ then $\Fi(T(p^\bot))=\Fi(p^\bot)=0$, hence $T(p^\bot)=p^\bot T(p^\bot)p^\bot\leq p^\bot$. The following observations on subharmonic projections are useful.

\begin{lem}\label{lem:charakSubharm}\emph{(\cite[Lem.\,2]{luc}, \cite[Thm.\,2]{uma})}
	\begin{enumerate}[label=(\alph*)]
		\item Let $\cA\subseteq\BH$ be a von Neumann algebra and $T:\cA\to\cA$ be a Markov operator. Then an orthogonal projection $p\in\cA$ is subharmonic, i.e.\ $T(p)\geq p$, if and only if\vspace*{-2.5ex}
			\begin{align*}
				p T(x)p = p T(p xp)p \qquad\mbox{for all $x\in\cA$.}  
			\end{align*}
		\item Let $p\in\cA$ be a subharmonic projection and $T_p:p\cA p\to p\cA p: x\mapsto p\,T(x)\,p$. Then ${T_p}^n(x)=p\,T^n(x)\,p$ for any $x\in p\cA p$ and $n\in\NN$.
	\end{enumerate}
\end{lem}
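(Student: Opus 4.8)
The plan is to prove part~(a) first and then obtain part~(b) from it by a one-line induction. The key preliminary observation for (a) is that, since $T$ is unital, the inequality $T(p)\geq p$ is equivalent to $T(p^\bot)\leq p^\bot$, and as $0\leq T(p^\bot)\leq\Eins$ holds automatically, subharmonicity of $p$ is exactly the two-sided estimate $0\leq T(p^\bot)\leq p^\bot$. I would treat the two implications of (a) separately.

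For the implication ``$\Leftarrow$'' I would substitute $x=\Eins$ into the identity $pT(x)p=pT(pxp)p$, which yields $pT(p)p=pT(\Eins)p=p$. Together with $0\leq T(p)\leq\Eins$ this forces $T(p)\geq p$, by the elementary order fact that any $a$ with $0\leq a\leq\Eins$ and $pap=p$ satisfies $a\geq p$: for $\xi\in p\hrH$ one has $\skal{(\Eins-a)\xi}{\xi}=0$ with $\Eins-a\geq0$, hence $a\xi=\xi$, so $ap=p$ and therefore $a=p+p^\bot ap^\bot\geq p$; apply this with $a=T(p)$.

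The implication ``$\Rightarrow$'' carries the actual work. Assume $0\leq T(p^\bot)\leq p^\bot$. Compressing by $p$ gives $0\leq pT(p^\bot)p\leq0$, so $pT(p^\bot)p=0$, and positivity of $T(p^\bot)$ upgrades this to $T(p^\bot)p=0=pT(p^\bot)$. For arbitrary $z\in\cA$ I would then apply the Kadison--Schwarz inequality for the unital completely positive map $T$ to the element $zp^\bot$:
\[ T(zp^\bot)^*\,T(zp^\bot)\ \leq\ T\bigl(p^\bot z^*z\,p^\bot\bigr)\ \leq\ \norm{z}^2\,T(p^\bot), \]
the last step because $p^\bot z^*z\,p^\bot\leq\norm{z}^2p^\bot$. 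Compressing by $p$ annihilates the right-hand side, so $\bigl(T(zp^\bot)p\bigr)^*\bigl(T(zp^\bot)p\bigr)=0$, i.e.\ $T(zp^\bot)p=0$ for every $z\in\cA$; taking adjoints (using that $T$ preserves $*$) gives $pT(p^\bot z)=0$ for every $z\in\cA$. Finally I would split $x=pxp+pxp^\bot+p^\bot xp+p^\bot xp^\bot$: the two summands carrying $p^\bot$ on the left are killed by $pT(p^\bot\,\cdot\,)=0$, while $pxp^\bot=(px)p^\bot$ is killed on the right by $T(\,\cdot\,p^\bot)p=0$, leaving $pT(x)p=pT(pxp)p$.

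For (b) I would induct on $n$, the case $n=1$ being the definition of $T_p$. Since $T_p$ maps $p\cA p$ into itself, the inductive step reads
\[ T_p^{\,n+1}(x)=T_p\bigl(pT^n(x)p\bigr)=pT\bigl(p\,T^n(x)\,p\bigr)p=pT\bigl(T^n(x)\bigr)p=pT^{n+1}(x)p, \]
where the penultimate equality is part~(a) applied with the element $T^n(x)\in\cA$ in place of $x$. The only genuine obstacle I anticipate is the forward direction of (a): converting the order inequality $T(p^\bot)\leq p^\bot$ into the algebraic identity $pT(x)p=pT(pxp)p$. The decisive input there is the Schwarz inequality for unital completely positive maps together with the trivial domination $p^\bot z^*z\,p^\bot\leq\norm{z}^2p^\bot$; after that, only routine manipulation of the four corners of $x$ remains.
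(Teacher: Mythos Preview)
Your proof is correct. Note, however, that the paper does not actually prove this lemma: it is stated with citations to \cite[Lem.\,2]{luc} and \cite[Thm.\,2]{uma} and used without proof throughout. Your argument is the standard one and matches what is found in those references: the backward direction of~(a) by evaluating at $x=\Eins$, the forward direction via the Kadison--Schwarz inequality applied to $zp^\bot$ to kill the off-diagonal corners, and~(b) by the obvious induction using~(a). There is nothing to add or correct.
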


\subsection*{A Brief Revision of Classical Markov Chains}\label{sec:klassisch}

As a motivation for our approach we introduce classical notions of recurrence and transience and present them in a way that allows us to generalize them. For further details and the corresponding proofs we refer to \cite{rev} and \cite{dur}.

Let $\Omega$ be a discrete state space (finite or countable) and $T$ the \emph{transition matrix} of a (homogeneous) Markov chain on $\Omega$. In order to distinguish states in $\Omega$ from general states on operator algebras we prefer to call elements of $\Omega$ ``\emph{point-states}''.
The probability $t_{ij}^{(k)}$ to get from a point-state $i\in\Omega$ to $j\in\Omega$ in $k$ steps is equal to the $(i,j)$-th entry of $T^k$. The limit $\sum_{k=0}^\infty t_{jj}^{(k)}$ can be interpreted as the \emph{expected number of visits} of the point-state $j$ when the Markov chain starts in $j\in\Omega$.

Denoting the probability to ever reach a point-state $j$ when starting from $i$ by $\rho_{ij}$, we call a point-state $j\in\Omega$ \emph{transient} if $\rho_{jj}<1$ and \emph{recurrent} if $\rho_{jj}=1$. A point-state $j$ which can be reached from a recurrent point-state $i$ is itself recurrent, i.e.\ $\rho_{ii}=1$ and $\rho_{ij}>0$ imply $\rho_{jj}=1$. A recurrent point-state $j\in\Omega$ is called \emph{positive recurrent} if $\lim_{n\to\infty}\frac{1}{n}\sum_{k=1}^n t_{jj}^{(k)}>0$ and \emph{null recurrent} if $\lim_{n\to\infty}\frac{1}{n}\sum_{k=1}^n t_{jj}^{(k)}=0$. A subset $A\subseteq\Omega$ is called transient, recurrent, positive recurrent or null recurrent, respectively, if all $j\in A$ have the corresponding property.

\begin{thm}\label{thm:classical:transient}
A point-state $j\in\Omega$ is transient if and only if $\sum_{k=0}^\infty t_{jj}^{(k)}<\infty$, i.e.\ if the Markov chain is expected to hit $j$ only finitely many times.
\end{thm}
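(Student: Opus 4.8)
The plan is to compare $\sum_{k=0}^\infty t_{jj}^{(k)}$ with the return probability $\rho_{jj}$ by means of a first-passage (renewal) decomposition. Fix $j\in\Omega$ and, for $n\geq 1$, let $f^{(n)}$ denote the probability that the chain started in $j$ visits $j$ for the first time (after time $0$) at step $n$; thus $\rho_{jj}=\sum_{n=1}^\infty f^{(n)}$. Conditioning a length-$n$ excursion from $j$ to $j$ on the epoch of its first return to $j$ and using the Markov property yields the identity $t_{jj}^{(n)}=\sum_{k=1}^n f^{(k)}\,t_{jj}^{(n-k)}$ for all $n\geq 1$, with the convention $t_{jj}^{(0)}=1$.

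Next I would pass to generating functions. For $0\leq s<1$ put $U(s):=\sum_{n=0}^\infty t_{jj}^{(n)}s^n$ and $F(s):=\sum_{n=1}^\infty f^{(n)}s^n$; both series converge on $[0,1)$ since their coefficients lie in $[0,1]$. Multiplying the renewal identity by $s^n$ and summing over $n\geq 1$ gives, by absolute convergence, $U(s)-1=F(s)U(s)$, so that $U(s)=(1-F(s))^{-1}$ on $[0,1)$. Since all coefficients are nonnegative, monotone convergence yields $U(s)\uparrow\sum_{k=0}^\infty t_{jj}^{(k)}$ and $F(s)\uparrow\rho_{jj}\leq 1$ as $s\uparrow 1$, whence $\sum_{k=0}^\infty t_{jj}^{(k)}=(1-\rho_{jj})^{-1}$, read as $+\infty$ when $\rho_{jj}=1$. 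In particular $\sum_{k=0}^\infty t_{jj}^{(k)}<\infty$ if and only if $\rho_{jj}<1$, i.e.\ if and only if $j$ is transient, which is the assertion.

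A purely probabilistic variant avoids generating functions altogether. Applying the strong Markov property at the successive return times of $j$ shows that, starting from $j$, the probability of making at least $m$ returns to $j$ equals $\rho_{jj}^{\,m}$; hence the expected number of returns is $\sum_{m\geq 1}\rho_{jj}^{\,m}$, which is finite precisely when $\rho_{jj}<1$. Since the expected total number of visits to $j$ (counting time $0$) equals, by linearity of expectation, exactly $\sum_{k=0}^\infty t_{jj}^{(k)}$---the interpretation of this sum already noted above---the same dichotomy follows. Either way there is no serious obstacle: the only steps requiring a word of justification are the renewal identity (a routine conditioning on the first return epoch via the Markov property) and the passage to the limit $s\uparrow 1$ (immediate from monotonicity and nonnegativity of the coefficients), respectively the use of the strong Markov property at the successive hitting times of $j$ in the probabilistic version.
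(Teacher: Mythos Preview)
Your argument is correct: the first-passage decomposition $t_{jj}^{(n)}=\sum_{k=1}^{n} f^{(k)}t_{jj}^{(n-k)}$, the generating-function identity $U(s)=(1-F(s))^{-1}$, and the monotone passage $s\uparrow 1$ are all standard and sound, and the probabilistic variant via the strong Markov property is equally valid.

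As for comparison with the paper: the paper does not actually prove this statement. It is quoted in the preliminary section as a classical fact, with the proof deferred to the references \cite{rev} and \cite{dur}. Your proof is precisely the textbook argument one finds in Durrett, so there is no discrepancy to discuss.
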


We want to reformulate this algebraically. Let $\linfty(\Omega)$ be the C$^*$-algebra of all uniformly bounded functions from $\Omega$ to $\CC$. Then every $f\in\linfty(\Omega)$ can be identified with a column vector and every finite measure on $\Omega$ (regarded as a row vector) is a linear functional on $\linfty(\Omega)$. The (normal) states $\Zst(\linfty(\Omega))$ correspond to the probability distributions on $\Omega$. Furthermore, for every point-state $i\in\Omega$ we obtain a state $\delta_i\in\Zst(\linfty(\Omega))$ with $\delta_i(\set{j})=\delta_{ij}$ (Kronecker delta) and each state $\psi\in\Zst(\linfty(\Omega))$ can uniquely be written as a (finite or infinite) convex combination of these $\delta_i$. If $\Fi\in\Zst(\linfty(\Omega))$ is \emph{stationary}, i.e.\ $\Fi\circ T=\Fi$, then every $i\in\Omega$ with $\Fi(\set{i})>0$ is positive recurrent. Hence we have the following

\begin{thm}\label{thm:classical:stationary}
	The support of a stationary state is positive recurrent.
\end{thm}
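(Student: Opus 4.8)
The plan is to pass from the state $\Fi$ to its stationary distribution. Writing $\pi_i:=\Fi(\set{i})$ for $i\in\Omega$, we have $\pi_i\geq0$, $\sum_{i\in\Omega}\pi_i=1$, and $\supp\Fi=\Set{i\in\Omega}{\pi_i>0}$ (the indicator function of this set). So the claim is equivalent to: $\pi_j>0$ implies $j$ is positive recurrent; i.e.\ we must rule out the possibilities that $j$ is transient or null recurrent as soon as $\pi_j>0$.

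The first step is to iterate stationarity. From $\Fi\circ T=\Fi$ one gets $\Fi\circ T^k=\Fi$ for every $k\in\NN$, which, read off coordinate-wise, says $\sum_{i\in\Omega}\pi_i\,t_{ij}^{(k)}=\pi_j$ for all $j\in\Omega$ and all $k$. Averaging over $k=1,\dots,n$ and dividing by $n$ gives $\sum_{i\in\Omega}\pi_i\bigl(\tfrac1n\sum_{k=1}^n t_{ij}^{(k)}\bigr)=\pi_j$. The inner Cesàro averages are bounded by $1$ and $\sum_i\pi_i<\infty$, so dominated convergence lets me pass to the limit $n\to\infty$ and obtain
\[
\sum_{i\in\Omega}\pi_i\,e_{ij}=\pi_j,\qquad\text{where }\ e_{ij}:=\lim_{n\to\infty}\tfrac1n\sum_{k=1}^n t_{ij}^{(k)},
\]
the limits $e_{ij}$ existing by the classical ergodic theory of Markov chains (for $i=j$ this existence is already part of the definition of null/positive recurrence).

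The core of the argument is then to show that if $j$ is \emph{not} positive recurrent, then $e_{ij}=0$ for \emph{every} $i\in\Omega$; the displayed identity would then force $\pi_j=0$, and the contrapositive is exactly the assertion. If $j$ is transient, \refthm{thm:classical:transient} gives $\sum_{k=0}^\infty t_{jj}^{(k)}<\infty$, hence $\tfrac1n\sum_{l=0}^n t_{jj}^{(l)}\to0$; and the first-passage decomposition $t_{ij}^{(k)}=\sum_{m=1}^k f_{ij}^{(m)}t_{jj}^{(k-m)}$ (with $f_{ij}^{(m)}$ the probability of first hitting $j$ from $i$ at time $m$, so $\sum_m f_{ij}^{(m)}=\rho_{ij}\leq1$) together with a Fubini rearrangement bounds $\tfrac1n\sum_{k=1}^n t_{ij}^{(k)}$ by $\tfrac1n\sum_{l=0}^n t_{jj}^{(l)}\to0$, so $e_{ij}=0$. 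If $j$ is null recurrent, then $e_{jj}=0$ by definition, and the same first-passage estimate yields $e_{ij}\leq\rho_{ij}\,e_{jj}=0$. In both cases $e_{ij}=0$ for all $i$, so $\pi_j=0$; i.e.\ $\supp\Fi$ consists only of positive recurrent point-states.

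The only non-elementary ingredient is the existence of the Cesàro limits $e_{ij}$ (and, if one wants the sharp identity, $e_{ij}=\rho_{ij}/m_j$ with $m_j$ the mean return time), which is classical and can simply be quoted from \cite{rev} or \cite{dur}. The remaining ``obstacle'' is merely being careful with two interchanges of limit and summation — the dominated-convergence passage over $i\in\Omega$ and the Fubini rearrangement in the first-passage decomposition — but since every term in sight is non-negative and bounded, neither interchange is delicate.
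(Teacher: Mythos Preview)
Your argument is correct and is essentially the standard textbook proof: iterate stationarity, pass to Ces\`aro limits via dominated convergence, and use the first-passage decomposition to show $e_{ij}=0$ whenever $j$ is not positive recurrent. One minor remark: your separate treatment of the transient case is unnecessary, since for transient $j$ one also has $e_{jj}=0$ (the series $\sum_k t_{jj}^{(k)}$ converges, so its Ces\`aro average vanishes), and the single bound $e_{ij}\leq\rho_{ij}\,e_{jj}$ from the first-passage decomposition covers both cases at once.

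There is nothing to compare against in the paper itself: \refthm{thm:classical:stationary} sits in the classical review subsection, where the authors explicitly say ``for further details and the corresponding proofs we refer to \cite{rev} and \cite{dur}''. The theorem is simply quoted as the classical fact that $\Fi(\set{i})>0$ forces $i$ to be positive recurrent, with no proof given. Your write-up is exactly the kind of argument one finds in those references (e.g.\ Durrett), so you have supplied what the paper deliberately outsources.
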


Let $\ChFkt_A\in\linfty(\Omega)$ be the characteristic function of $A\subseteq\Omega$ and set $\ChFkt_i:=\ChFkt_{\set{i}}$. Then $T^k(\ChFkt_j)(i)=\delta_i\circ T^k(\ChFkt_j)=t_{ij}^{(k)}$ and $T^k(\ChFkt_A)(i)$ describes the probability to hit $A$ in $k$ steps when  starting from the point-state $i\in\Omega$. We define $\cG(f):=\sum_{k=0}^\infty T^k(f)$ for $0\leq f\in\linfty(\Omega)$, where  $\cG(f)$ may also attain the value $+\infty$. Then we have $\cG(\ChFkt_i)(i)=\sum_{k=0}^\infty T^k(\ChFkt_i)(i)= \sum_{k=0}^\infty t_{ii}^{(k)}$, hence $\cG(\ChFkt_A)(i)$ can be interpreted as expected number of visits to $A\subseteq\Omega$ when starting from $i\in\Omega$ (be it finite or infinite).

If $i\in\Omega$ is recurrent and $f(i)>0$ for a positive function $f\in\linfty(\Omega)$ then $\cG(f)(i)\geq \cG(f(i)\cdot\ChFkt_i)(i)= f(i)\cdot\cG(\ChFkt_i)(i)=\infty$.
If $A\subseteq\Omega$ is transient and $\cG(\ChFkt_A)(j)>0$ then $j$ is transient, too, since $\rho_{ij}=0$ for all recurrent $i\in\Omega$ and all $j\in A$. Hence the support of $\cG(\ChFkt_A)$ is transient. If $A=\set{j}$, and thus for finite $A$, the function $\cG(\ChFkt_A)$ takes finite values only. In general, however, it may happen that $\cG(\ChFkt_A)$ attains the value $+\infty$. Nevertheless, there still exists a positive function $f\in\linfty(\Omega)$ such that $\cG(f)$ is bounded and has the same support as $\cG(\ChFkt_A)$. Indeed, by the (complete) maximum principle (\cite[Thm.\,2.1.12]{rev}) $\cG(\ChFkt_j)(i)\leq\cG(\ChFkt_j)(j)<\infty$ for all $j\in A$, $i\in\Omega$. Thus by a standard argument $f$ can be chosen as a suitable (infinite) weighted sum of $\ChFkt_j$, $j\in A$, (cf. the proof of \refthm{thm:potentialSupp}).
%
%
Hence we arrive at the following characterization of transient sets:

\begin{thm}\label{thm:classical:potential}
	A set $A\subseteq\Omega$ is transient if and only if there is a positive function $f\in\linfty(\Omega)$ with $\cG(f)$ finite such that $A$ is contained in the support of $\cG(f)$.
\end{thm}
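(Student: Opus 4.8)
The plan is to extract both implications directly from the observations recorded just before the statement, supplying only the classical complete maximum principle to deal with a possibly infinite set $A$. For the ``only if'' direction I would argue as follows. Assume $A\subseteq\Omega$ is transient; since $\Omega$ is at most countable, enumerate $A=\set{j_1,j_2,\dots}$. Each $j_n$ is transient, so by \refthm{thm:classical:transient} the quantity $M_n:=\cG(\ChFkt_{j_n})(j_n)=\sum_{k=0}^\infty t_{j_nj_n}^{(k)}$ is finite, and clearly $M_n\geq 1$. The complete maximum principle (\cite[Thm.\,2.1.12]{rev}) gives $\cG(\ChFkt_{j_n})(i)\leq\cG(\ChFkt_{j_n})(j_n)=M_n$ for every $i\in\Omega$, so each single-point potential $\cG(\ChFkt_{j_n})$ is bounded. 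Now set $c_n:=2^{-n}/M_n\in(0,1]$ and $f:=\sum_n c_n\ChFkt_{j_n}$; then $f\in\linfty(\Omega)$ is positive, and rearranging the nonnegative series gives $\cG(f)=\sum_n c_n\,\cG(\ChFkt_{j_n})\leq\sum_n c_nM_n=\sum_n 2^{-n}=1$, so $\cG(f)$ is finite (indeed bounded). Moreover $\cG(f)(j_m)\geq c_m\,\cG(\ChFkt_{j_m})(j_m)=2^{-m}>0$ for every $m$, so $A$ is contained in the support of $\cG(f)$. This is exactly the ``standard argument'' referred to above, and parallels the proof of \refthm{thm:potentialSupp}.

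For the ``if'' direction, assume $f\in\linfty(\Omega)$ is positive, $\cG(f)$ is finite, and $A$ is contained in the support of $\cG(f)$; I would show that each $j\in A$ is transient. Rearranging the nonnegative double series, $\cG(f)(j)=\sum_k T^k(f)(j)=\sum_{i\in\Omega}f(i)\,\cG(\ChFkt_i)(j)$, and $\cG(f)(j)>0$ forces some $i_0\in\Omega$ with $f(i_0)>0$ and $\cG(\ChFkt_{i_0})(j)=\sum_k t_{ji_0}^{(k)}>0$, i.e.\ $\rho_{ji_0}>0$. If $j$ were recurrent, then $i_0$, being reachable from the recurrent point-state $j$, would itself be recurrent and would communicate with $j$; hence the chain started at $j$ would visit $i_0$ infinitely often almost surely, so $\cG(\ChFkt_{i_0})(j)=\infty$. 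But then $\cG(f)(j)\geq f(i_0)\,\cG(\ChFkt_{i_0})(j)=\infty$, contradicting the finiteness of $\cG(f)$. Therefore $j$ is transient, and so is $A$.

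I do not expect a genuine obstacle: the statement is essentially a repackaging of the discussion preceding it. The one ingredient that is not pure bookkeeping is the uniform bound $\cG(\ChFkt_{j})(i)\leq\cG(\ChFkt_{j})(j)$ supplied by the complete maximum principle, which is precisely what keeps the weighted sum $f$ bounded when $A$ is infinite; everything else is manipulation of nonnegative series together with the elementary recurrence/transience dichotomy recalled at the start of the subsection.
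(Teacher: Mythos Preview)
Your proposal is correct and follows exactly the approach the paper sketches in the paragraph preceding the theorem: the ``only if'' direction via the complete maximum principle and a weighted sum $f=\sum_n c_n\ChFkt_{j_n}$ (the ``standard argument'' the paper alludes to, with the explicit forward reference to the proof of \refthm{thm:potentialSupp}), and the ``if'' direction via the fact that a recurrent point-state cannot reach a transient one with positive probability. The paper does not spell out a formal proof; your write-up is precisely the intended fleshing-out of its discussion.
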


These are the versions of (positive) recurrence and transience which we will generalize to the non-commutative context.


\section{Potentials and the Riesz Decomposition Theorem}\label{sec:potentials}

In order to define transient projections in \refsec{sec:transProj}, we introduce non-commutative potentials (cf.\ \cite{fr}, \cite{fag}, and \cite{uma}) as a generalization of the classical notion (see, e.g., \cite{rev} and \cite{dm}). We investigate their relation to superharmonic elements and projections and obtain a non-commutative version of the Riesz decomposition theorem, which will be a key tool in our discussion.

\begin{defn}\label{defn:potential}
	Let $T:\cA\to\cA$ be a Markov operator on a von Neumann algebra $\cA\subseteq\BH$.
	An element $x\in\cA_+$ is called \emph{$T$-summable} if $\sum_{n=0}^\infty{T^n(x)}$ exists in $\cA_+$.
	An element $y\in\cA_+$ is called \emph{potential (for $T$)} if there exists a $T$-summable element $x\in\cA_+$ such that $y=\sum_{n=0}^\infty{T^n(x)}$.
	By \[\cApot(T):=\Set{y\in\cA_+}{\exists\, x\in\cA_+ \mbox{ with } y=\sum\nolimits_{n=0}^\infty{T^n(x)}}\] (or simply $\cApot$ if no confusion can arise) we denote the set of all potentials for $T$.
\end{defn}

\begin{rem}\label{rem:potenialSuperharm}
	If $y=\sum_{n=0}^\infty{T^n(x)}$ is a potential then $y$ is superharmonic and $x=y-T(y)$; in particular, $x$ is uniquely determined. Indeed, since $T$ is normal and the net of partial sums is increasing, we have $y-T(y)=\sum_{n=0}^\infty{T^n(x)}-\sum_{n=1}^\infty{T^n(x)}=x\geq 0$. In this case $x\in\cA_+$ is also called the \emph{charge} of the potential $y\in\cApot$.
\end{rem}

In the next theorem we show that the classical Riesz decomposition theorem (cf.\ \cite[Thm.\,2.1.6]{rev} or \cite[no.\,IX.28]{dm}) and its proof carry over to the non-commuta\-tive situation.

\begin{thm}[\textbf{Riesz decomposition theorem}]\label{thm:RieszDecompThm}
	Let $T:\cA\to\cA$ be a Markov operator on a von Neumann algebra $\cA\subseteq\BH$.
	\begin{enumerate}[label=(\alph*)]
		\item\label{item:RieszDecompThm:potential} An element $y\in\cA_+$ is a potential if and only if $T(y)\leq y$ and $\stlim\limits_{n\to\infty}T^n(y)=0$.
		\item\label{item:RieszDecompThm:zerlegung} An element $a\in\cA_+$ is superharmonic if and only if there are elements $y\in\cApot$ and $0\leq h\in\fix T$ such that $a=y+h$. Such a decomposition is unique.
	\end{enumerate}
\end{thm}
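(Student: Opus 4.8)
The plan is to dispatch the two parts in order, using as the single computational device the telescoping identity $\sum_{n=0}^{N} T^n(y-T(y)) = y - T^{N+1}(y)$, valid for any $y\in\cA_+$, together with two standard facts: a bounded monotone net of self-adjoint operators in $\cA$ converges in the strong operator topology to its order-supremum (resp.\ order-infimum), and a normal Markov operator intertwines such monotone limits. The charge identity $x=y-T(y)$ recorded in \refrem{rem:potenialSuperharm} will be used repeatedly as well.

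For part~\refNumX{item:RieszDecompThm:potential}{} I would prove both implications. If $y=\sum_{n=0}^\infty T^n(x)$ is a potential with charge $x\in\cA_+$, then \refrem{rem:potenialSuperharm} already gives $T(y)\leq y$; applying $T$ to the increasing net of partial sums and invoking normality (and iterating) yields $T^n(y)=\sum_{k=n}^\infty T^k(x)$, a decreasing net in $\cA_+$ whose quadratic forms satisfy $\skal{T^n(y)\xi}{\xi}=\sum_{k\geq n}\skal{T^k(x)\xi}{\xi}\to 0$, hence $\stlim_{n\to\infty}T^n(y)=0$. Conversely, if $T(y)\leq y$ and $\stlim_{n\to\infty}T^n(y)=0$, set $x:=y-T(y)\in\cA_+$; the telescoping identity gives $\sum_{n=0}^N T^n(x)=y-T^{N+1}(y)\to y$ strongly, and since the partial sums are bounded above by $y$, the element $x$ is $T$-summable with potential $y$.

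For part~\refNumX{item:RieszDecompThm:zerlegung}{}, if $a\in\cA_+$ is superharmonic then $(T^n(a))_{n}$ is a decreasing net in $\cA_+$, hence strongly convergent to some $h:=\stlim_{n\to\infty}T^n(a)$ with $0\leq h\leq a$. To see $h\in\fix T$, I rewrite this as the increasing limit $a-T^n(a)\uparrow a-h$ and apply $T$: normality gives $T(a)-T(h)=\bigvee_n\bigl(T(a)-T^{n+1}(a)\bigr)=T(a)-h$, so $T(h)=h$. Then $y:=a-h\in\cA_+$ satisfies $T(y)=T(a)-h\leq a-h=y$ and $T^n(y)=T^n(a)-h\to 0$ strongly, so $y\in\cApot$ by part~\refNumX{item:RieszDecompThm:potential}{}, and $a=y+h$ is the desired decomposition. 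For uniqueness, any decomposition $a=y_i+h_i$ gives $T^n(a)=T^n(y_i)+h_i$; letting $n\to\infty$ and using part~\refNumX{item:RieszDecompThm:potential}{} forces $h_i=\stlim_{n\to\infty}T^n(a)$, so $h_1=h_2$ and therefore $y_1=y_2$.

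The only genuinely delicate point is the bookkeeping with strong-operator limits of monotone nets: normality is stated via suprema of bounded \emph{increasing} nets, so before applying $T$ to a decreasing limit such as $T^n(a)\downarrow h$ one must first flip it to the increasing limit $a-T^n(a)\uparrow a-h$ — this is the one place where care is needed. One also uses the Vigier-type theorem that a bounded monotone net of self-adjoint operators converges strongly to its order limit, together with the elementary observation that $0\leq b_n$ decreasing with $\skal{b_n\xi}{\xi}\to 0$ for all $\xi$ implies $b_n\to 0$ strongly. Everything else is the telescoping algebra, exactly as in the classical proof of the Riesz decomposition theorem referenced in the statement.
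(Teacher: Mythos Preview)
Your proof is correct and follows essentially the same route as the paper: the telescoping identity $\sum_{n=0}^N T^n(y-T(y))=y-T^{N+1}(y)$ for part~(a), and the construction $h:=\stlim_n T^n(a)$, $y:=a-h$ for part~(b). You are in fact more careful than the paper in two places: you spell out why $h\in\fix T$ via the flip to an increasing net before invoking normality (the paper simply asserts this), and you give the uniqueness argument, which the paper omits entirely despite claiming it in the statement. The only thing you skip is the trivial reverse implication in~(b), that $a=y+h$ is superharmonic; this is immediate from $T(a)=T(y)+h\leq y+h=a$.
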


\begin{proof}
\begin{description}[font=\normalfont\mdseries\em]
	\item[~\;\ref*{item:RieszDecompThm:potential}]
		If $y\in\cA_+$ is a potential then $y$ is superharmonic by \refrem{rem:potenialSuperharm} and from $y=\sum_{n=0}^\infty{T^n(x)}$ it follows that $T^k(y)=\sum_{n=k}^\infty{T^n(x)}\Geht{k\to \infty} 0$ \st.

		Conversely, for $x:=y-T(y)\geq 0$ we obtain
		\[
			\sum_{n=0}^N{T^n(x)}=\sum_{n=0}^N{T^n(y)}-\sum_{n=1}^{N+1}{T^n(y)} =y- T^{N+1}(y)\Geht{N\to \infty} y\ \ \st.
		\]
	\item[~\;\ref*{item:RieszDecompThm:zerlegung}]
		An element $a=y+h$ with $y$ and $h$ as above is superharmonic by \refrem{rem:potenialSuperharm}.

		Conversely, if $T(a)\leq a$ then $h:=\stlim\limits_{n\to\infty}T^n(a)$ exists and $0\leq h\in\fix T$. Set $y:=a-h$ then $T(y)\leq y$ and $\stlim\limits_{n\to\infty}T^n(y)=\stlim\limits_{n\to\infty}T^n(a)-h=0$. Hence $y$ is a potential by part \ref*{item:RieszDecompThm:potential}.\qedhere
\end{description}
\end{proof}

\begin{cor}\label{cor:potential_dominierter_fixpkt_verschwindet}
	Let $0\leq h\in\fix T$. If there is a potential $y\in\cApot$ such that $h\leq y$ then $h=0$.
\end{cor}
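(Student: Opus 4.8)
The plan is to exploit the second characterization of potentials from the Riesz decomposition theorem, namely that a potential $y$ satisfies $\stlim_{n\to\infty}T^n(y)=0$ (\refthmX{thm:RieszDecompThm}{\ref*{item:RieszDecompThm:potential}}), and to play this off against the fact that $h$ is fixed by every iterate $T^n$.

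Concretely, I would first observe that since $T$ is positive and $0\le h\le y$, applying $T$ repeatedly preserves this inequality: from $h\le y$ we get $T(h)\le T(y)$, and because $h\in\fix T$ this reads $h\le T(y)$; iterating gives $h=T^n(h)\le T^n(y)$ for every $n\in\NN$. Next, since $y$ is a potential, part \ref*{item:RieszDecompThm:potential} of \refthm{thm:RieszDecompThm} yields $\stlim_{n\to\infty}T^n(y)=0$. Combining the two, for every $\xi\in\hrH$ we have $0\le\skal{h\xi}{\xi}\le\skal{T^n(y)\xi}{\xi}$, and the right-hand side tends to $0$ because strong convergence $T^n(y)\xi\to 0$ implies $\skal{T^n(y)\xi}{\xi}\to 0$. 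Hence $\skal{h\xi}{\xi}=0$ for all $\xi$, and positivity of $h$ forces $h=0$.

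There is essentially no obstacle here: the only point requiring a moment's care is passing from the operator inequalities $h\le T^n(y)$ and the strong convergence $T^n(y)\to 0$ to the conclusion $h=0$, which is the elementary remark that a positive operator dominated by a strongly null sequence of positive operators must vanish (test against vectors and use $h\ge 0$). Everything else is a direct invocation of \refthmX{thm:RieszDecompThm}{\ref*{item:RieszDecompThm:potential}} together with positivity and unitality of the Markov operator.
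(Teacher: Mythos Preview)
Your proof is correct and is essentially the same as the paper's: both argue that $h=T^n(h)\le T^n(y)\to 0$ strongly, using positivity of $T$ together with the characterization of potentials in \refthmX{thm:RieszDecompThm}{\ref*{item:RieszDecompThm:potential}}. The paper compresses this into a single line, while you spell out the final passage from the operator inequality and strong convergence to $h=0$ by testing against vectors.
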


\begin{proof}
	Since $T$ is completely positive, we have $h=T^N(h)\leq T^N(y)\Geht{N\to\infty} 0 \ \ \st$.
\end{proof}

\begin{cor}\label{cor:cApotClosedCone}
	The set of potentials $\cApot\subseteq\cA_+$ for a Markov operator $T:\cA\to\cA$ is a norm-closed $T$-invariant cone.
\end{cor}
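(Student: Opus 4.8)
The plan is to handle the three assertions separately, getting the cone and $T$-invariance properties directly from \refdefn{defn:potential} and reducing norm-closedness to the characterisation of potentials in \refthmX{thm:RieszDecompThm}{\,\ref*{item:RieszDecompThm:potential}}. For the cone property I would note that $0=\sum_{n=0}^\infty T^n(0)\in\cApot$ and that, for potentials $y_i=\sum_{n=0}^\infty T^n(x_i)$ with charges $x_i\in\cA_+$ ($i=1,2$) and $\lambda\geq0$, linearity of $T$ turns the partial sums into a sum of two \st-convergent increasing nets, so that $\sum_{n=0}^N T^n(\lambda x_1+x_2)=\lambda\sum_{n=0}^N T^n(x_1)+\sum_{n=0}^N T^n(x_2)\Geht{N\to\infty}\lambda y_1+y_2$ \st; hence $\lambda y_1+y_2\in\cApot$ with charge $\lambda x_1+x_2\in\cA_+$. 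For $T$-invariance I would observe that for a potential $y=\sum_{n=0}^\infty T^n(x)$ the telescoped partial sums satisfy $\sum_{n=0}^N T^n\bigl(T(x)\bigr)=\bigl(\sum_{n=0}^{N+1}T^n(x)\bigr)-x\Geht{N\to\infty}y-x=T(y)$ \st, so that $T(y)\in\cApot$ with charge $T(x)\in\cA_+$; alternatively, $T^2(y)\leq T(y)$ by positivity of $T$ together with $\stlim_{n\to\infty}T^n\bigl(T(y)\bigr)=\stlim_{n\to\infty}T^{n+1}(y)=0$, so that \refthmX{thm:RieszDecompThm}{\,\ref*{item:RieszDecompThm:potential}} applies.

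The substantive part is showing that $\cApot$ is norm-closed. Given $(y_k)_{k\in\NN}\subseteq\cApot$ with $\norm{y_k-y}\to0$, I first note $y\in\cA_+$ since $\cA_+$ is norm-closed, and then verify the two conditions of \refthmX{thm:RieszDecompThm}{\,\ref*{item:RieszDecompThm:potential}} for $y$. For $T(y)\leq y$: a positive unital map is contractive, $\norm{T}=\norm{T(\Eins)}=1$, so $T$ is norm-continuous, and passing to the norm limit in $T(y_k)\leq y_k$ (again using closedness of $\cA_+$) yields $T(y)\leq y$. For $\stlim_{n\to\infty}T^n(y)=0$: the key is the uniform bound $\norm{T^n}=1$, which gives $\bigl|\skal{T^n(y)\xi}{\xi}-\skal{T^n(y_k)\xi}{\xi}\bigr|\leq\norm{T^n(y-y_k)}\,\norm{\xi}^2\leq\norm{y-y_k}\,\norm{\xi}^2$ for every $\xi\in\hrH$; so given $\xi$ and $\eps>0$ I would pick $k$ with $\norm{y-y_k}\,\norm{\xi}^2<\eps/2$ and then, using that $y_k$ is a potential, an $N$ with $\skal{T^n(y_k)\xi}{\xi}<\eps/2$ for all $n\geq N$, whence $\skal{T^n(y)\xi}{\xi}<\eps$ for $n\geq N$. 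Since the $T^n(y)$ are positive with $\norm{T^n(y)}\leq\norm{y}$, the estimate $\norm{T^n(y)\xi}^2=\skal{T^n(y)^2\xi}{\xi}\leq\norm{y}\,\skal{T^n(y)\xi}{\xi}$ then promotes this pointwise convergence of quadratic forms to $\stlim_{n\to\infty}T^n(y)=0$, and \refthmX{thm:RieszDecompThm}{\,\ref*{item:RieszDecompThm:potential}} gives $y\in\cApot$.

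The only step that is not essentially mechanical is this last one: termwise strong convergence $T^n(y_k)\to0$ does not by itself transfer to $T^n(y)\to0$, and one genuinely needs the uniform norm bound $\norm{T^n}=1$ to make the functionals $z\mapsto\skal{T^n(z)\xi}{\xi}$ equicontinuous in $z$, so that the order of the two limits may be interchanged. All the remaining ingredients --- linearity and norm-continuity of $T$, norm-closedness of $\cA_+$, and the Riesz decomposition theorem --- are already available.
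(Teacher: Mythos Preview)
Your proof is correct and follows essentially the same approach as the paper: both verify the cone and $T$-invariance properties (the paper simply invokes \refthmX{thm:RieszDecompThm}{\,\ref*{item:RieszDecompThm:potential}} for these), and both establish norm-closedness by exploiting the uniform bound $\norm{T^n}\leq1$ to interchange the two limits. The only cosmetic difference is that the paper estimates $\norm{T^n(y)\xi}$ directly via the triangle inequality, whereas you pass through the quadratic form $\skal{T^n(y)\xi}{\xi}$ and then promote to strong convergence using $T^n(y)^2\leq\norm{y}\,T^n(y)$; both routes are equally valid.
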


\begin{proof}
	It is immediate from \refthmX{thm:RieszDecompThm}{.\ref*{item:RieszDecompThm:potential}} that $\cApot$ is a $T$-invariant cone.
	Let $(y_j)_{j\in\NN}\subseteq\cApot$ be a sequence converging uniformly to $y\in\cA_+$. Then we have
	\[
		T(y)= T(\lim\nolimits_j y_j)=\lim\nolimits_j T(y_j)\leq\lim\nolimits_j y_j=y.
	\]
	For $\eps>0$ choose $j\in\NN$ with $\norm{y-y_j}<\frac{\eps}{2}$. Let $\xi\in\hrH$ with $\norm{\xi}\leq 1$ and $n_0\in\NN$ such that $\norm{T^n(y_j)\xi}<\frac{\eps}{2}$ for every $n\geq n_0$. Then we have
	\[
		\norm{T^n(y)\xi}\leq \norm{T^n(y-y_j)\xi}+\norm{T^n(y_j)\xi}< \norm{T^n}\,\norm{(y-y_j)}\,\norm{\xi}+\tfrac{\eps}{2}<\eps.
	\]
	Hence $y\in\cApot$ by \refthm{thm:RieszDecompThm}.\ref*{item:RieszDecompThm:potential}.
\end{proof}

\begin{rem}\label{rem:shift_auf_ell_infty}
	Consider $\cA=\ell^\infty(\NN)$ and $T_0$ the left shift on $\cA$, i.e.\ $T_0(f)(n)=f(n+1)$ for $f=(f(n))_{n\in\NN}$ in $\cA$. Then $\cApot$ is given by the positive decreasing sequences which converge to zero. Since $\Eins\notin\cApot$, the cone of potentials cannot be strongly closed.
	
	Furthermore, this example shows that the cone of charges does not need to be closed. Here it is given by the positive elements of $\ell^1(\NN)\subseteq\cA$.
\end{rem}

The following observation can be useful for the computation of potentials.

\begin{prop}\label{prop:potentialsForPowersOfT}
	Let $\cA\subseteq\BH$ be a von Neumann algebra, $T:\cA\to\cA$ a Markov operator, and $N\in\NN$. If $y\in\cA_+$ is a potential for $T^N$ then $\tilde y:=y+T(y)+\ldots T^{N-1}(y)$ is a potential for $T$.
\end{prop}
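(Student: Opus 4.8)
The plan is not to verify the two conditions of the Riesz decomposition theorem directly, but to exhibit $\tilde y$ at once as a potential for $T$ by reading off its charge. Since $y$ is a potential for $T^N$, \refrem{rem:potenialSuperharm} applied to the (normal) Markov operator $T^N$ shows that $x:=y-T^N(y)$ belongs to $\cA_+$ and that $y=\sum_{k=0}^\infty T^{Nk}(x)$, the series being the strong limit of the increasing sequence of its finite partial sums. I claim that $x$ is $T$-summable with $\sum_{m=0}^\infty T^m(x)=\tilde y$, which yields $\tilde y\in\cApot(T)$.

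The first step is to pull each $T^j$ through the series defining $y$. For $0\le j\le N-1$ the power $T^j$ is again a normal Markov operator, so applying it to the increasing sequence $\bigl(\sum_{k=0}^K T^{Nk}(x)\bigr)_K$ gives $T^j(y)=\sum_{k=0}^\infty T^{Nk+j}(x)$; summing over $j$ then produces $\tilde y=\sum_{j=0}^{N-1}\sum_{k=0}^\infty T^{Nk+j}(x)$.

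The key step is the re-indexing of this double sum of positive elements. Since $m\mapsto\bigl(m\bmod N,\ \lfloor m/N\rfloor\bigr)$ is a bijection of $\{0,1,2,\ldots\}$ onto $\{0,\ldots,N-1\}\times\{0,1,2,\ldots\}$, every finite partial sum $\sum_{m\in F}T^m(x)$ with $F$ finite is dominated by $\sum_{j=0}^{N-1}\sum_{k=0}^K T^{Nk+j}(x)\le\tilde y$ once $K$ is large enough. Hence the net of finite partial sums of $\bigl(T^m(x)\bigr)_{m\ge 0}$ is bounded above by $\tilde y$, so $\sum_{m=0}^\infty T^m(x)$ exists in $\cA_+$; and because such a sum is the supremum of its finite partial sums, the bijection above gives $\sum_{m=0}^\infty T^m(x)=\sum_{j=0}^{N-1}\sum_{k=0}^\infty T^{Nk+j}(x)=\tilde y$. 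Thus $x$ is $T$-summable and $\tilde y$ is a potential for $T$ with charge $x=y-T^N(y)$.

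I expect the two interchanges to be the only delicate points. Pulling each $T^j$ inside the infinite sum uses normality, so the hypothesis that $T$ is a Markov operator (and not merely positive) is genuinely needed here; and the rearrangement of the double series has to be justified through the order-theoretic description of an infinite sum of positive elements in a von Neumann algebra as the supremum of its finite partial sums, together with the a priori bound by $\tilde y$. A more computational alternative would be to check the conditions of \refthmX{thm:RieszDecompThm}{.\ref*{item:RieszDecompThm:potential}} for $\tilde y$: the inequality $T(\tilde y)=\tilde y-y+T^N(y)\le\tilde y$ follows by telescoping from $T^N(y)\le y$, while $\stlim_{n\to\infty}T^n(\tilde y)=0$ reduces, as $\tilde y$ is a finite sum of the $T^j(y)$, to $\stlim_{m\to\infty}T^m(y)=0$; the latter is obtained by writing $m=Nk+r$ with $0\le r<N$ and transporting the given limit $\stlim_{k\to\infty}T^{Nk}(y)=0$ through the normal map $T^r$ via $T^{Nk+r}(y)=T^r\bigl(T^{Nk}(y)\bigr)$ — again the same appeal to normality.
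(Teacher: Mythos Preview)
Your argument is correct, and in fact the ``computational alternative'' you sketch at the end is precisely the route the paper takes: it verifies the two conditions of \refthmX{thm:RieszDecompThm}{.\ref*{item:RieszDecompThm:potential}} for $\tilde y$, first obtaining $T(\tilde y)\le\tilde y$ by the same telescoping identity you write down, and then showing $\stlim_n T^n(y)=0$ by splitting $(T^n(y))_n$ into the $N$ subsequences $(T^{Nk+r}(y))_k$ and invoking stop-continuity of $T^r$ on bounded sets (the boundedness coming from $T^n(y)\le\tilde y$).

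Your main proof is a genuinely different and somewhat more direct approach: rather than appealing to the Riesz characterization, you exhibit the charge $x=y-T^N(y)$ at the outset and show $\tilde y=\sum_{m\ge 0}T^m(x)$ by pulling each $T^j$ through the series via normality and then rearranging. This has the advantage of bypassing the discussion of stop-continuity on bounded sets entirely --- the only analytic input is that a normal map preserves suprema of bounded increasing nets, which is exactly the definition. The paper's approach, by contrast, stays closer to the Riesz decomposition machinery it has just developed and will keep using; it also makes the superharmonicity $T(\tilde y)\le\tilde y$ explicit, which is worth recording in its own right. Both proofs identify the same charge $x$ (the paper notes this separately after the statement), so they converge on the same structural information.
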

Note that if $x:=y-T^N(y)$ is the charge for $y$ with respect to $T^N$ then $x$ is also the charge for $\tilde y$ with respect to $T$.

\begin{proof}
	Let $y$ be a potential for $T^N$. By the characterization of potentials of the Riesz decomposition theorem~\refNumX{thm:RieszDecompThm}{.\ref*{item:RieszDecompThm:potential}} it follows that $y$ is superharmonic for $T^N$ and $\stlim\limits_{k\to\infty}(T^N)^k(y)=0$. This yields
	\[
		T(\tilde y) = T(y)+\ldots+T^{N-1}(y)+T^{N}(y) \leq T(y)+\ldots+T^{N-1}(y)+y=\tilde y,
	\]
	i.e.\ $\tilde y$ is superharmonic for $T$. Hence $T^n(y)\leq T^n(\tilde y)\leq\tilde y$ ($n\in\NN$), which implies that $\bigl(T^n(y)\bigr)_{n\in\NN}$ is bounded. Since the map $T$ is $\sigma\mbox{-}\mathrm{stop}$-continuous, it is $\mathrm{stop}$-continuous on bounded sets. Therefore, the stop-limit of the sequence $\bigl(T^m(T^{Nk}(y))\bigr)_{k\in\NN}$ is zero for each $0\leq m\leq N-1$. Hence $\bigl(T^n(y)\bigr)_{n\in\NN}$ can be decomposed into $N$ disjoined subsequences that are \st-convergent to zero. This implies $\stlim\limits_{n\to\infty}T^n(y)=0$.
\end{proof}

\begin{lem}\label{lem:TraegerBleibenGleich}
	Let $T:\cA\to\cA$ be a Markov operator on a von Neumann algebra $\cA\subseteq\BH$ and let $x\in\cA_+$ with support 
	$p:=\supp x$.
	Then $\supp T(x) = \supp T(p)$.\\
	In particular, if $\supp x = \supp y$ for $x,y\in\cA_+$ then $\supp T(x) = \supp T(y)$.
\end{lem}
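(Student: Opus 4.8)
The key fact is that for a positive operator $x$ with support projection $p$, one has an operator inequality sandwiching $x$ between scalar multiples of $p$ in the sense of ranges; more precisely, there exist constants $0 < c \leq 1$ such that $c\,p \leq x \leq \norm{x}\,p$ is false in general, but the correct statement is that $\supp x = p$ means $x$ and $p$ generate the same hereditary subalgebra, and in particular there is a sequence (or the functional calculus) showing $p = \mathrm{s\text{-}lim}_n\, f_n(x)$ for suitable continuous functions $f_n$ with $f_n(0)=0$, e.g.\ $f_n(t) = \min(nt,1)$. The plan is: first prove the inequality $0 \leq x \leq \norm{x}\,p$, which gives $0 \leq T(x) \leq \norm{x}\,T(p)$ by positivity of $T$, hence $\supp T(x) \leq \supp T(p)$. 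For the reverse inequality, one uses $p = \bigvee_n f_n(x)$ with $f_n(t)=\min(nt,1)$, so that $f_n(x) \leq n\,x$ (since $\min(nt,1)\leq nt$ for $t\geq 0$); applying the positive map $T$ gives $T(f_n(x)) \leq n\,T(x)$, whence $\supp T(f_n(x)) \leq \supp T(x)$ for every $n$. Then normality of $T$ yields $T(p) = T\bigl(\bigvee_n f_n(x)\bigr) = \bigvee_n T(f_n(x))$, and since the support of a supremum of positive elements is the supremum of their supports, $\supp T(p) = \bigvee_n \supp T(f_n(x)) \leq \supp T(x)$. Combining the two inclusions gives $\supp T(x) = \supp T(p)$.

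For the ``in particular'' clause, if $\supp x = \supp y =: p$, then by the first part $\supp T(x) = \supp T(p) = \supp T(y)$, so there is nothing further to do.

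The main obstacle I anticipate is being careful about the functional-calculus step and the claim that $p = \bigvee_n f_n(x)$: one must verify that $\min(nt,1) \nearrow \mathbbm{1}_{(0,\infty)}(t)$ pointwise and invoke normality of the Borel functional calculus to conclude $\bigvee_n f_n(x) = \mathbbm{1}_{(0,\infty)}(x) = \supp x = p$. One should also double-check the elementary but essential fact that for a bounded increasing net $(a_i) \subseteq \cA_+$ with supremum $a$, one has $\supp a = \bigvee_i \supp a_i$: the inequality $\supp a \geq \supp a_i$ is clear since $a \geq a_i$, and conversely $a_i \leq \norm{a_i}\, \supp a_i \leq (\sup_i\norm{a_i})\bigvee_j \supp a_j$ shows $a \leq (\sup_i \norm{a_i}) \bigvee_j \supp a_j$, giving $\supp a \leq \bigvee_j \supp a_j$. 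Everything else is a routine combination of positivity and normality of $T$, which are available by hypothesis. An alternative, perhaps cleaner, route avoiding functional calculus entirely is to note that $\supp x$ is characterised by: $q \geq \supp x \iff q x q = x \iff (\Eins - q) x (\Eins - q) = 0$; but the functional-calculus argument above is the most transparent, so I would present that one.
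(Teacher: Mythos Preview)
Your proof is correct and follows essentially the same approach as the paper. The only cosmetic difference is that the paper uses the spectral projections $p_k := \chi_{]1/k,\,\norm{x}]}(x)$ in place of your continuous functions $f_n(x)=\min(nx,1)$; since $\norm{T(p_k)}\leq 1$, the paper can write $T(p_k)\leq \supp T(x)$ directly and pass to the supremum without invoking the auxiliary fact about supports of suprema, but the underlying idea---approximate $p$ from below by elements dominated by scalar multiples of $x$ and use positivity and normality of $T$---is identical.
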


\begin{prf}
	Since $\supp x = \supp (\alpha\cdot x)$ for every $\alpha>0$, we can assume that $0\leq x\leq\Eins$. Let $p:=\supp x$, then $p\geq x$. Hence $T(p)\geq T(x)$ and $\supp T(p)\geq \supp T(x)$.

	Conversely, using the spectral theorem let $p_k:=\ChFkt_{]\frac{1}{k},\norm{x}]}(x)$ for every $k\in\NN$. Then each $p_k$ is an orthogonal projection and $(p_k)_{k\in\NN}$ converges monotonically from below to $\supp x=p$. Now $p_k\leq k\cdot x$ implies $T(p_k)\leq k\cdot T(x)$ and $T(p_k) \leq \supp T(x)$. Letting $k\to\infty$ we have $T(p)\leq \supp T(x)$, since $T$ is normal. Hence $\supp T(p)\leq\supp T(x)$.
\end{prf}

\begin{prop}\label{prop:suppEbenfallsSuperharm}
	If  $T:\cA\to\cA$ is a Markov operator and $a\in\cA_+$ is superharmonic, i.e.\ $T(a)\leq a$, then its support projection is superharmonic, too.\end{prop}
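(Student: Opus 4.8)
The plan is to deduce this from \reflem{lem:TraegerBleibenGleich}, which already says that forming the support projection is compatible with $T$ in the sense that $\supp T(x)=\supp T(\supp x)$ for every $x\in\cA_+$; the only extra ingredient needed is the elementary fact that $0\le b\le c$ in $\cA$ implies $\supp b\le\supp c$. The latter follows at once from the definition of the support: with $q:=\supp c$ one has $q^\bot c\,q^\bot=0$, hence $0\le q^\bot b\,q^\bot\le q^\bot c\,q^\bot=0$, so $q^\bot b^{1/2}=0$ and therefore $b=qbq$, i.e.\ $\supp b\le q$.

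Granting this, the argument runs as follows. Put $p:=\supp a$. The hypothesis $T(a)\le a$ together with the monotonicity of the support gives $\supp T(a)\le\supp a=p$. On the other hand, since a Markov operator is normal, \reflem{lem:TraegerBleibenGleich} applies and yields $\supp T(a)=\supp T(p)$. Combining the two,
\[
	\supp T(p)=\supp T(a)\le\supp a=p.
\]
It remains to upgrade this to $T(p)\le p$: as $0\le p\le\Eins$ and $T$ is positive and unital, $0\le T(p)\le\Eins$, so writing $r:=\supp T(p)$ we get $T(p)=r\,T(p)\,r\le r\,\Eins\,r=r\le p$, the last inequality being precisely $\supp T(p)\le p$. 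Hence $T(p)\le p$, i.e.\ $p$ is superharmonic.

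I do not expect a genuine obstacle here. The two auxiliary facts used — monotonicity of the support under the order, and the inequality $b\le\supp b$ for a positive contraction $b$ — are immediate from the spectral theorem, and the essential content has been packaged into \reflem{lem:TraegerBleibenGleich}. It is worth noting that superharmonicity of $a$ enters only through the weak consequence $\supp T(a)\le\supp a$, and that the Riesz decomposition theorem is not needed for this statement; the route via the decomposition $a=y+h$ with $y\in\cApot$ and $0\le h\in\fix T$ could be made to work as well, but it is less direct.
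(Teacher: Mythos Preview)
Your proof is correct and is essentially identical to the paper's: both invoke \reflem{lem:TraegerBleibenGleich} to get $\supp T(p)=\supp T(a)\le p$ and then use that $T(p)$ is a positive contraction to conclude $T(p)\le\supp T(p)\le p$. You have merely spelled out the two auxiliary order facts a bit more explicitly than the paper does.
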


\begin{prf}
	Let $p:=\supp a$, then we have $\supp T(p)=\supp T(a)\leq p$ by \reflem{lem:TraegerBleibenGleich}. Since $\norm{T(p)}\leq 1$, it follows that $T(p)\leq\supp T(p)\leq p$.
\end{prf}

In particular, support projections of potentials are superharmonic. This special case is implicitly contained in 
\cite[Prop.\,4]{fr}.

In general, the support of a potential is not in $\cApot$ itself. Compare, however, \refprop{prop:ptr_potential_falls_T_wme} and the subsequent remarks.

\begin{thm}\label{thm:superharm_complete_lattice} \emph{\cite{rz}}\ \ 
	Let $T:\cA\to\cA$ be a Markov operator on a von Neumann algebra $\cA\subseteq\BH$. Then the set of superharmonic projections is a complete lattice.
\end{thm}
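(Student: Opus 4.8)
The plan is to show that the set $\cP_{\mathrm{sh}}$ of superharmonic projections is closed under arbitrary infima and suprema (with respect to the usual order on projections in $\cA$), which, since $\Eins\in\cP_{\mathrm{sh}}$, makes it a complete lattice. I would treat infima and suprema separately, and in each case pass through positive superharmonic \emph{elements} and then apply \refprop{prop:suppEbenfallsSuperharm} to return to projections.

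First I would handle infima. Given a family $(p_i)_{i\in I}$ of superharmonic projections, consider a faithful normal semifinite weight, or more simply reduce to the $\sigma$-finite case by a central decomposition; but the cleanest route avoids this: let $q:=\bigwedge_{i\in I}p_i$. The projection $q$ is characterized as the largest projection below every $p_i$. Here I would use that $q$ can be obtained as the strong limit (or infimum) of finite infima $p_{i_1}\wedge\cdots\wedge p_{i_n}$, so it suffices to treat a finite family, and then by induction the case of two projections $p,q$. For two superharmonic projections, I would recall that $p\wedge q = \mathrm{s\text{-}lim}_{n}(pq)^n$ (the strong limit of alternating products), and more usefully that $p\wedge q$ is the support projection of $p+q$... no — rather, $p\vee q = \supp(p+q)$ and by De Morgan $p\wedge q = \Eins - (p^\bot\vee q^\bot)$. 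So in fact both infima and suprema reduce to suprema.

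Therefore the heart of the argument is: \emph{the supremum of two superharmonic projections is superharmonic}, and then \emph{the supremum of an arbitrary family of superharmonic projections is superharmonic}. For the supremum of $p,q$ superharmonic, I would argue that $a:=p+q\in\cA_+$ is superharmonic ($T$ positive and linear gives $T(a)=T(p)+T(q)\le p+q=a$), hence by \refprop{prop:suppEbenfallsSuperharm} its support $\supp(p+q)=p\vee q$ is superharmonic. For an arbitrary family $(p_i)_{i\in I}$, the sum $\sum_i p_i$ need not converge, so instead I would take the upward-directed net of finite suprema $q_F:=\bigvee_{i\in F}p_i$ over finite $F\subseteq I$: each $q_F$ is superharmonic by the two-element case and induction, $(q_F)_F$ increases to $q:=\bigvee_{i\in I}p_i$, and since $T$ is normal, $T(q)=T(\bigvee_F q_F)=\bigvee_F T(q_F)\le\bigvee_F q_F=q$, so $q$ is superharmonic. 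Combining with De Morgan (using that $T$ being unital gives $T(p^\bot)=\Eins-T(p)\ge\Eins-p=p^\bot$ whenever $p$ is \emph{subharmonic}, so one must be slightly careful: $p$ superharmonic does \emph{not} make $p^\bot$ superharmonic) — so De Morgan does \emph{not} immediately reduce infima to suprema, and the infimum must be handled on its own.

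The main obstacle, then, is the infimum. For this I would show directly that if $(p_i)_{i\in I}$ are superharmonic and $q:=\bigwedge_i p_i$, then $T(q)\le q$. The key is: $q\le p_i$ for every $i$, so $T(q)\le T(p_i)\le p_i$ for every $i$, hence $T(q)\le\bigwedge_i p_i=q$ — but this uses $T(q)\le p_i$ as operators together with $p_i$ being a projection, which only gives $T(q)\le p_i$, and the infimum of the $p_i$ in the order of \emph{self-adjoint operators} coincides with $q$ only because the lower bound $T(q)$ is itself dominated by each projection; in fact $\{x\in\cA_{sa}: x\le p_i \ \forall i\}$ has $\bigwedge_i p_i$ as... not necessarily its largest element among projections unless $x$ is a projection. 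The correct statement to invoke is that for any $x\in\cA_+$ with $x\le p_i$ for all $i$, one has $x\le\bigwedge_i p_i$: this holds because $x\le p_i$ implies $\supp x\le p_i$ (for $x\le p$ with $p$ a projection, $p^\bot x p^\bot\le p^\bot p p^\bot=0$ so $x=pxp$, whence $\supp x\le p$), thus $\supp x\le q$, thus $x=qxq\le\|x\|q\le q$ if $\|x\|\le1$ — and in general $x\le\|x\|\,\supp x\le\|x\|\,q$, which is not quite $x\le q$. So I would instead argue at the level of supports directly: $\supp T(q)\le\supp T(p_i)\le p_i$ for each $i$ (using \reflem{lem:TraegerBleibenGleich} and superharmonicity of $p_i$), hence $\supp T(q)\le q$, and since $\|T(q)\|\le1$ we get $T(q)\le\supp T(q)\le q$. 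This is exactly the pattern of the proof of \refprop{prop:suppEbenfallsSuperharm}, and it is clean. So the real content is this support-projection manipulation; once it is in place, both infima and suprema of arbitrary families follow, and the completeness of the lattice is immediate since $\cA$ itself ($=\Eins$) is the top element and $0$ the bottom.
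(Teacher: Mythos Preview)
Your proposal is correct and follows essentially the same route as the paper: finite suprema via $p\vee q=\supp(p+q)$ and \refprop{prop:suppEbenfallsSuperharm}, arbitrary suprema via normality of $T$, and infima handled separately. For the infimum you needlessly detour through support projections: the paper simply notes $T(p_\wedge)\le T(p_i)\le p_i$ for all $i$ and concludes $T(p_\wedge)\le p_\wedge$ directly---this is legitimate because $\norm{T(p_\wedge)}\le 1$, which is exactly the observation you made and then abandoned (``$x=qxq\le\norm{x}q\le q$ if $\norm{x}\le 1$''); your citation of \reflem{lem:TraegerBleibenGleich} is also misplaced, as only the elementary fact $0\le a\le b\Rightarrow\supp a\le\supp b$ is used.
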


Since $p$ is subharmonic if $p^\bot$ is superharmonic, it follows that the set of subharmonic projections is a complete lattice, too.

Infima of superharmonic projections are easily seen to be superharmonic (see, e.g., \cite{luc}). Their suprema are superharmonic, too, as was recently shown by Raggio and Zangara by considering faces of normal states (cf.\ \cite{rz}). 
In the present approach this appears as an easy consequence of the previous proposition.

\begin{prf}
	We have to show that for any family $(p_i)_{i\in I}\subseteq\cA$ of superharmonic projections the supremum $p_\vee:=\bigvee_{i\in I}p_i$ and the infimum $p_\wedge:=\bigwedge_{i\in I}p_i$ are both superharmonic. 

	Since $T(p_\wedge)\leq T(p_i)\leq p_i$ for all $i\in I$, it follows that $T(p_\wedge)\leq p_\wedge$.

	Let $J\subseteq I$ be a finite subset and $p_J:=\bigvee_{i\in J}p_i=\supp\sum_{i\in J}p_i$. Then $T(\sum_{i\in J}p_i)=\sum_{i\in J}T(p_i)\leq\sum_{i\in J}p_i$ and by \refprop{prop:suppEbenfallsSuperharm} we have $T(p_J)\leq p_J$.

	The finite subsets of $I$ are directed by inclusion and, obviously, $(p_J)_{J\subseteq I \text{ finite}}$ is an increasing net of orthogonal projections such that $p_\vee=\bigvee_{i\in I}p_i=\bigvee_{J\subseteq I \text{ finite}} p_J$. Hence $(T(p_J))_{J\subseteq I \text{ finite}}$ is a bounded increasing net in $\cA_+$ and since $T$ is normal,
	\[
		\textstyle T(p_\vee)=T\bigl(\bigvee_J p_J\bigr)=\bigvee_J T(p_J) \leq \bigvee_J p_J = p_\vee.\qedhere
	\]
\end{prf}

\section{Transient Projections}\label{sec:transProj}

In this section we use potentials to characterize the transient part of an arbitrary von Neumann algebra $\cA\subseteq\BH$ with respect to a Markov operator $T:\cA\to\cA$. The second part of this section concentrates on transience on $\BH$.

Our notion of transience relies on the following observations.

\begin{lem}\label{lem:PotentialTraegerApprox}
	Let $T:\cA\to\cA$ be a Markov operator on a von Neumann algebra $\cA\subseteq\BH$ and let $y\in\cApot$.
	\begin{enumerate}[label=(\alph*)]
		\item\label{item:PotentialTraegerApprox:posElt} There exists a $T$-summable element $\tilde x\in\cA_+$ such that $\supp \tilde x = \supp y$.
		\item\label{item:PotentialTraegerApprox:projections} There is an increasing sequence of $T$-summable orthogonal projections $(p_m)_{m\in\NN}$ such that $\bigvee_{m\in\NN}\,p_m = \supp y$.
	\end{enumerate}
\end{lem}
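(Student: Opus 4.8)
The plan is to obtain both parts from a single, well-chosen $T$-summable element, so that part \ref*{item:PotentialTraegerApprox:projections} drops out of part \ref*{item:PotentialTraegerApprox:posElt} by a spectral-theoretic argument of the same type as in the proof of \reflem{lem:TraegerBleibenGleich}.

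For \ref*{item:PotentialTraegerApprox:posElt} I would start from the charge $x:=y-T(y)\ge 0$ of $y$, so that $y=\sum_{n=0}^{\infty}T^{n}(x)$ by \refrem{rem:potenialSuperharm}, and set $\tilde x:=\sum_{n=0}^{\infty}2^{-n}T^{n}(x)$. One first checks that this series converges in $\cA_+$: its partial sums form an increasing net dominated by $\bigl(\sum_{n}2^{-n}\bigr)\norm{x}\,\Eins=2\norm{x}\,\Eins$. Its support is then computed from the elementary fact that the support of a strongly convergent sum of positive elements equals the supremum of the supports of the summands; since $\supp\bigl(2^{-n}T^{n}(x)\bigr)=\supp T^{n}(x)$, this gives $\supp\tilde x=\bigvee_{n}\supp T^{n}(x)=\supp\sum_{n}T^{n}(x)=\supp y$. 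The decisive point is that $\tilde x$ is $T$-summable: normality of $T^{m}$ yields $T^{m}(\tilde x)=\sum_{n}2^{-n}T^{n+m}(x)$, and collecting the terms with $n+m=k$ shows that $\sum_{m=0}^{M}T^{m}(\tilde x)$ is dominated by $\bigl(\sum_{n}2^{-n}\bigr)\sum_{k}T^{k}(x)=2y$ for every $M$; hence the increasing net $\bigl(\sum_{m=0}^{M}T^{m}(\tilde x)\bigr)_{M}$ is bounded and converges in $\cA_+$.

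For \ref*{item:PotentialTraegerApprox:projections} I would rescale so that $0\le\tilde x\le\Eins$ and take the spectral projections $p_{m}:=\ChFkt_{]\frac{1}{m+1},1]}(\tilde x)$ for $m\in\NN$. These increase with $m$ and satisfy $\bigvee_{m}p_{m}=\ChFkt_{]0,1]}(\tilde x)=\supp\tilde x=\supp y$ by part \ref*{item:PotentialTraegerApprox:posElt}. Since $p_{m}\le(m+1)\tilde x$ by functional calculus, positivity of $T$ gives $\sum_{n=0}^{N}T^{n}(p_{m})\le(m+1)\sum_{n=0}^{\infty}T^{n}(\tilde x)<\infty$ for all $N$, so each $p_{m}$ is $T$-summable, and we are done.

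The routine calculations aside, the two points needing care are both in \ref*{item:PotentialTraegerApprox:posElt}: the identity $\supp\bigl(\sum_{n}a_{n}\bigr)=\bigvee_{n}\supp a_{n}$ for strongly convergent sums of positive elements (the partial sums dominate each $a_{j}$, and conversely $q:=\bigvee_{n}\supp a_{n}$ fixes every $a_{n}$ and hence, by strong continuity of $z\mapsto qzq$, also the sum), and the interchange of the two strong summations in the bound $\sum_{m}T^{m}(\tilde x)\le 2y$, which is legitimate because all partial sums are positive and uniformly bounded by $2y$. It is worth noting that the geometric weights are genuinely needed: $y$ itself need not be $T$-summable, since formally $\sum_{m}T^{m}(y)=\sum_{k}(k+1)T^{k}(x)$.
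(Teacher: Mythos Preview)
Your argument is correct and follows essentially the same route as the paper: define $\tilde x$ as a geometrically damped version $\sum_{k}\lambda^{k}T^{k}(x)$ of the potential (the paper keeps $\lambda\in(0,1)$ arbitrary, you take $\lambda=\tfrac12$), verify that it has support equal to $\supp y$ and is $T$-summable with $\sum_{n}T^{n}(\tilde x)\le\frac{1}{1-\lambda}\,y$, and then pass to spectral projections for part~\ref*{item:PotentialTraegerApprox:projections}. The only cosmetic difference is that the paper obtains the bound via the swap $\sum_{n}T^{n}(\tilde x)=\sum_{k}\lambda^{k}T^{k}(y)\le\frac{1}{1-\lambda}\,y$ using superharmonicity of $y$, whereas you reach the same inequality by counting the coefficient of $T^{k}(x)$ directly.
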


A version of this lemma for continuous semigroups can be deduced from \cite{uma}. Since their proof uses the resolvent, we give a proof which is adapted to our discrete situation.

\begin{prf}
	\begin{description}[font=\normalfont\mdseries\em]
		\item[~\;\ref*{item:PotentialTraegerApprox:posElt}]
			Let $x\in\cA_+$ be the charge of $y$. Fix $0<\lambda<1$ and define $x_\lambda:=\sum_{k=0}^\infty{\lambda^k\, T^k(x)}\in\cA_+$ then $\supp x_\lambda = \supp y$. Since $T$ is normal, we obtain
			\begin{align*}
				\sum_{n=0}^\infty T^n(x_\lambda) & = \sum_{n=0}^\infty T^n\biggl(\sum_{k=0}^\infty{\lambda^k\, T^k(x)}\biggr)
				= \sum_{k=0}^\infty\lambda^k\,T^k \biggl(\sum_{n=0}^\infty{T^n(x)}\biggr)\\
				& = \sum_{k=0}^\infty\lambda^k\,T^k (y)\; \leq\; \sum_{k=0}^\infty\lambda^k\,y\;=\;\frac{1}{1-\lambda}\;y\ \in\;\cA_+.
			\end{align*}
		\item[~\;\ref*{item:PotentialTraegerApprox:projections}]
			Let $p_m:=\ChFkt_{]\frac{1}{m},\norm{x_\lambda}]}(x_\lambda)$ for every $m\in\NN$. Since $p_m\leq m\cdot x_\lambda$, we have $\sum_{n=0}^\infty{T^n(p_m)}\in\cA_+$ for all $m\in\NN$ and $\bigvee_{m\in\NN}\,p_m = \supp x_\lambda = \supp y$.\qedhere
	\end{description}
\end{prf}

\begin{thm}\label{thm:trans}
	Let $\cA\subseteq\BH$ be a von Neumann algebra, $T:\cA\to\cA$ a Markov operator, and $p\in\cA$ an orthogonal projection. Then the following statements are equivalent:
	\begin{enumerate}[label=(\roman*)]
		\item\label{item:trans:potentials} There is a family $(y_i)_{i\in I}\subseteq\cApot$ of potentials for $T$ such that $p\leq\bigvee\limits_{i\in I}\supp y_i$.
		\item\label{item:trans:positive} There is a family $(x_i)_{i\in I}\subseteq\cA_+$ of $T$-summable elements such that $p\leq\bigvee\limits_{i\in I}\supp x_i$.
		\item\label{item:trans:projections} There is a family $(p_j)_{j\in J}\subseteq\cA$ of $T$-summable orthogonal projections such that $p\leq\bigvee\limits_{j\in J} p_j$.
	\end{enumerate}
\end{thm}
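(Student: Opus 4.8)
The strategy is to establish the cycle of implications $\ref*{item:trans:potentials} \Rightarrow \ref*{item:trans:positive} \Rightarrow \ref*{item:trans:projections} \Rightarrow \ref*{item:trans:potentials}$, where two of the three arrows are nearly immediate and the content is carried by Lemma~\ref*{lem:PotentialTraegerApprox}.

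First, for $\ref*{item:trans:potentials} \Rightarrow \ref*{item:trans:positive}$: given potentials $(y_i)_{i\in I}$ with $p\leq\bigvee_i\supp y_i$, apply Lemma~\ref*{lem:PotentialTraegerApprox}\ref*{item:PotentialTraegerApprox:posElt} to each $y_i$ to obtain a $T$-summable element $\tilde x_i\in\cA_+$ with $\supp\tilde x_i=\supp y_i$. The family $(\tilde x_i)_{i\in I}$ then satisfies $p\leq\bigvee_i\supp y_i=\bigvee_i\supp\tilde x_i$, which is exactly \ref*{item:trans:positive}.

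Next, for $\ref*{item:trans:positive} \Rightarrow \ref*{item:trans:projections}$: given $T$-summable $(x_i)_{i\in I}$ with $p\leq\bigvee_i\supp x_i$, note that each $x_i$ is the charge of the potential $y_i:=\sum_{n=0}^\infty T^n(x_i)$, so $\supp x_i=\supp y_i$ (by uniqueness of the charge, $x_i=y_i-T(y_i)$, but in any case the supports agree since $x_i\le y_i$ gives one inclusion and the spectral truncations of $x_i$ appearing in the sum give the other — this is exactly the content used in the proof of Lemma~\ref*{lem:PotentialTraegerApprox}\ref*{item:PotentialTraegerApprox:posElt}). Now apply Lemma~\ref*{lem:PotentialTraegerApprox}\ref*{item:PotentialTraegerApprox:projections} to each $y_i$ to get an increasing sequence of $T$-summable projections $(p_{i,m})_{m\in\NN}$ with $\bigvee_m p_{i,m}=\supp y_i=\supp x_i$. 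Then $(p_{i,m})_{i\in I,\,m\in\NN}$ is a family of $T$-summable orthogonal projections with $\bigvee_{i,m}p_{i,m}=\bigvee_i\supp x_i\geq p$, giving \ref*{item:trans:projections}.

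Finally, $\ref*{item:trans:projections} \Rightarrow \ref*{item:trans:potentials}$ is the easy direction: if $(p_j)_{j\in J}$ are $T$-summable projections with $p\leq\bigvee_j p_j$, then each $y_j:=\sum_{n=0}^\infty T^n(p_j)\in\cApot$ is a potential whose charge is (at most) $p_j$, and since $p_j\leq\supp y_j$ we get $p\leq\bigvee_j p_j\leq\bigvee_j\supp y_j$, which is \ref*{item:trans:potentials}. The only mild subtlety worth a sentence is that the indexing sets may differ in cardinality between statements, but this is harmless since all three conditions only assert the existence of \emph{some} family; relabelling the doubly-indexed family $(p_{i,m})$ by a single index set settles it. I expect no real obstacle here: the theorem is essentially a repackaging of Lemma~\ref*{lem:PotentialTraegerApprox}, and the main point is simply to track that supports are preserved when passing between a potential, its charge, and the approximating projections.
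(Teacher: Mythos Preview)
Your cycle $\ref*{item:trans:potentials}\Rightarrow\ref*{item:trans:positive}\Rightarrow\ref*{item:trans:projections}\Rightarrow\ref*{item:trans:potentials}$ is exactly the paper's route, and the first and last implications match the paper verbatim. One correction, though: in $\ref*{item:trans:positive}\Rightarrow\ref*{item:trans:projections}$ you assert $\supp x_i=\supp y_i$ for $y_i=\sum_n T^n(x_i)$. This is false in general --- the support of a potential is typically strictly larger than that of its charge (take, e.g., a shift). Fortunately you do not need equality: from $x_i\leq y_i$ you get $\supp x_i\leq\supp y_i$, hence $\bigvee_{i,m}p_{i,m}=\bigvee_i\supp y_i\geq\bigvee_i\supp x_i\geq p$, which is all that \ref*{item:trans:projections} requires. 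So the argument survives once you replace the erroneous ``$=$'' by ``$\geq$''.

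For comparison, the paper handles $\ref*{item:trans:positive}\Rightarrow\ref*{item:trans:projections}$ more directly, without passing through potentials at all: for each $x_i$ it takes the spectral projections $p_{(i,m)}:=\ChFkt_{]1/m,\norm{x_i}]}(x_i)$, which satisfy $p_{(i,m)}\leq m\,x_i$ and are therefore $T$-summable, with $\bigvee_m p_{(i,m)}=\supp x_i$. Your detour through $y_i$ and \reflemX{lem:PotentialTraegerApprox}{\ref*{item:PotentialTraegerApprox:projections}} ultimately invokes the same spectral-truncation idea one level removed.
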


\begin{proof}
	\begin{description}[font=\normalfont\mdseries\em]
		\item[\ref*{item:trans:potentials}$\folgt$\ref*{item:trans:positive}:]
			This is immediate from \reflem{lem:PotentialTraegerApprox}.
		\item[\ref*{item:trans:positive}$\folgt$\ref*{item:trans:projections}:]
			Let $J:=I\times\NN$. For each $j=(i,m)\in J$ define $p_j:=\ChFkt_{]\frac{1}{m},\norm{x_i}]}(x_i)$ as in the proof of  \reflem{lem:PotentialTraegerApprox}.\ref{item:PotentialTraegerApprox:projections}. Then $\sum_{n=0}^\infty{T^n(p_j)}\in\cA_+$ for all $j\in J$ and $p\leq\bigvee_{j\in J} p_j$.
		\item[\ref*{item:trans:projections}$\folgt$\ref*{item:trans:potentials}:]
			For each $j\in J$ define $y_j:=\sum_{n=0}^\infty{T^n(p_j)}$. Then each $y_j$ is a potential and $y_j\geq p_j$ and thus $\supp y_j\geq p_j$ for all $j\in J$.\qedhere
	\end{description}
\end{proof}

\begin{defn}\label{defn:trans}
	An orthogonal projection $p\in\cA$ is \emph{transient (w.r.t.\ $T$)} if it satisfies the equivalent conditions of \refthm{thm:trans}.
	We call the supremum of all transient projections the \emph{maximal transient projection} and denote it by $\ptr(T)$  (or $\ptr$ for short).
\end{defn}

If $p\in\cA$ is a $T$-summable orthogonal projection then $p$ is obviously transient. However, there are examples (cf.\ \refrem{rem:shift_auf_ell_infty}) where $\Eins\in\cA$ is transient, too. But, clearly, the unit of $\cA$ cannot be $T$-summable.

Clearly, the transient projections form a complete lattice. In particular, $\ptr$ is also transient and can be written as
\[
	\ptr=\bigvee\Set{\supp y}{y\in\cApot}=\bigvee\Set{p\in\cA}{\mbox{$p=p^*=p^2$ is $T$-summable}}.
\]
	
\begin{rem}\label{rem:trans}
	By \refthm{thm:superharm_complete_lattice} the maximal transient projection $\ptr$ is superharmonic. Hence the algebra $\cAtr:=\ptr\,\cA\,\ptr$ is invariant under $T$, i.e.\ $T(\cAtr)\subseteq\cAtr$, and $T\vert_{\cAtr}$ is \emph{submarkovian}, i.e.\ a completely positive normal map such that $T(\Eins)\leq\Eins$.
	Moreover, it is shown in \cite[Thm.\,8]{uma} that if $\cA$ is $\sigma$-finite then $\ptr$ is the support of a potential.
\end{rem}

In general a superharmonic projection does not need to be transient. Indeed, $\Eins\in\cA$ is always superharmonic, since $T(\Eins)=\Eins$. But $\Eins$ is not transient whenever there is a stationary normal state $\Fi\in\Zst(\cA)$ as we will see in the next section.

\begin{prop}\label{prop:trans_and_superharm_in_sonderfaellen}
	Let $T:\cA\to\cA$ be Markov operator on a von Neumann algebra $\cA\subseteq\BH$.
	\begin{enumerate}[label=(\alph*)]
		\item\label{item:trans_superharm:ergodic} If $T$ is ergodic, i.e.\ $\fix T=\CC\cdot\Eins$, then every superharmonic projection $p\in\cA$ with $0\neq p\neq\Eins$ is transient.
		\item\label{item:trans_superharm:ptr=0} For the maximal transient projection $\ptr$ we have $\ptr=0$ if and only if every superharmonic element $a\in\cA_+$ is in $\fix T$.
	\end{enumerate}
\end{prop}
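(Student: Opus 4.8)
For part~\ref{item:trans_superharm:ergodic}, the plan is to apply the Riesz decomposition theorem. Let $p\in\cA$ be a superharmonic projection with $0\neq p\neq\Eins$. By \refthmX{thm:RieszDecompThm}{.\ref{item:RieszDecompThm:zerlegung}} write $p=y+h$ with $y\in\cApot$ and $0\leq h\in\fix T$. Ergodicity forces $h=\lambda\Eins$ for some $\lambda\geq 0$. Since $h\leq p\leq\Eins$ and $p\neq\Eins$, we must have $\lambda<1$ (indeed $h\leq p$ implies $\lambda\Eins\leq p$, and if $\lambda\geq 1$ then $p=\Eins$, a contradiction). On the other hand $0\leq\lambda\Eins=h=p-y\leq p$, so $h\leq p$; applying $T^n$ and using $T^n(p)\to h$ \st{} together with $T(y)\leq y$ gives... actually more directly: from $p=y+h$ and $T^n(y)\to 0$ \st{} (by \refthmX{thm:RieszDecompThm}{.\ref{item:RieszDecompThm:potential}}), we get $\stlim_n T^n(p)=h=\lambda\Eins$. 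But $p$ superharmonic with $T^n(p)$ decreasing means $\lambda\Eins=\bigwedge_n T^n(p)\leq p$, forcing $\lambda=0$ since $p\neq\Eins$. Hence $h=0$ and $p=y\in\cApot$, so $\supp p=p\leq\supp y$ shows $p$ is transient by condition~\ref{item:trans:potentials} of \refthm{thm:trans} (using $p\neq 0$ to know $y\neq 0$, though transience of $0$ is trivial anyway).

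For part~\ref{item:trans_superharm:ptr=0}, one direction is a direct consequence of \refprop{prop:suppEbenfallsSuperharm} and the Riesz decomposition. Suppose $\ptr=0$. Let $a\in\cA_+$ be superharmonic; by \refthmX{thm:RieszDecompThm}{.\ref{item:RieszDecompThm:zerlegung}}, $a=y+h$ with $y\in\cApot$, $0\leq h\in\fix T$. By \refprop{prop:suppEbenfallsSuperharm} (applied to the potential $y$, which is superharmonic by \refrem{rem:potenialSuperharm}), $\supp y$ is a superharmonic projection; it is transient by \refthm{thm:trans}\ref{item:trans:potentials}, hence $\supp y\leq\ptr=0$, so $y=0$ and $a=h\in\fix T$. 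Conversely, suppose every superharmonic $a\in\cA_+$ lies in $\fix T$. Any potential $y\in\cApot$ is superharmonic, hence $y\in\fix T$; but then \refcor{cor:potential_dominierter_fixpkt_verschwindet} applied with $h:=y$ (a fixed point dominated by the potential $y$) gives $y=0$. Thus $\cApot=\{0\}$, so by the characterization in \refdefn{defn:trans} and the displayed formula for $\ptr$, we get $\ptr=\bigvee\Set{\supp y}{y\in\cApot}=0$.

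I expect the only mild subtlety to be the bookkeeping in part~\ref{item:trans_superharm:ergodic} — specifically, ruling out $\lambda=1$ (equivalently $h=\Eins$, $y=0$, $p=\Eins$) and concluding $\lambda=0$ cleanly. The cleanest route is: $h=\stlim_n T^n(p)$ and $T^n(p)\leq p$ for all $n$ (by superharmonicity and complete positivity), so $h\leq p$; since $h=\lambda\Eins$ and $p$ is a projection distinct from $\Eins$, $\lambda\Eins\leq p$ forces $\lambda=0$. Everything else is a routine invocation of the already-established Riesz decomposition theorem, \refprop{prop:suppEbenfallsSuperharm}, \refcor{cor:potential_dominierter_fixpkt_verschwindet}, and \refthm{thm:trans}; no new estimates are needed.
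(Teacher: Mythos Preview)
Your proposal is correct and follows essentially the same route as the paper: Riesz decomposition $p=y+h$ (resp.\ $a=y+h$), then in part~\ref{item:trans_superharm:ergodic} ergodicity gives $h=\lambda\Eins$ and $h\leq p\lneqq\Eins$ forces $\lambda=0$, while in part~\ref{item:trans_superharm:ptr=0} one direction uses that $\ptr=0$ kills all potentials and the converse uses \refcor{cor:potential_dominierter_fixpkt_verschwindet}. The invocation of \refprop{prop:suppEbenfallsSuperharm} in your forward direction of~\ref{item:trans_superharm:ptr=0} is superfluous---that $\supp y\leq\ptr$ follows directly from the definition of $\ptr$---but otherwise your argument matches the paper's.
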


\begin{proof}
	\begin{description}[font=\normalfont\mdseries\em]
		\item[~\;\ref*{item:trans_superharm:ergodic}]
			Since $p$ is superharmonic, by the Riesz decomposition theorem \refKlammern{thm:RieszDecompThm} there is a potential $y\in\cApot$ and an element $0\leq h\in\fix T$ such that $p=y+h$. Since $p\lneqq\Eins$ and $T$ is ergodic, this yields $h=0$. Hence $p$ is a potential and thus transient.
		\item[~\;\ref*{item:trans_superharm:ptr=0}]
			Again $a$ can be written as a sum of a potential $y$ and a positive fixed point $h$ of $T$. If $\ptr=0$, there are no non-zero potentials. Hence we have $y=0$ and $a=h\in\fix T$. For the converse, note that by \refcor{cor:potential_dominierter_fixpkt_verschwindet} every potential in $\fix T$ vanishes.\qedhere
	\end{description}
\end{proof}

Note that if $b\in\cA_+$ with $\norm{b}\leq 1$ is subharmonic then $(\Eins-b)\geq 0$ is superharmonic. Hence if $\Fi\in\Zst(\cA)$ is a stationary normal state for an \emph{ergodic} Markov operator $T$ and $p:=\supp\Fi\neq\Eins$ then $p^\bot$ is transient. Furthermore, part \ref*{item:trans_superharm:ptr=0} implies that if $\ptr=0$ then also every subharmonic $b\in\cA_+$ is in $\fix T$.


For the rest of this section we restrict ourselves to the case $\cA:=\BH$. This allows us to provide additional characterizations of transient projections and support projections of potentials.  For this we need the following elementary observation, for which we couldn't find a suitable reference.

\begin{lem}\label{lem:SummeSkalarProdEndlForallEta}
	Let $\cA\subseteq\BH$ be a von Neumann algebra. If $(a_n)_{n\in\NN_0}\subseteq\cA_+$ is a sequence such that $\sum_{n=0}^\infty{\skal{a_n\eta}{\eta}} < \infty$ for every $\eta\in\hrH$ then $\sum_{n=0}^\infty{a_n}$ exists in $\cA_+$.
\end{lem}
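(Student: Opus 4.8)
The plan is to show that the partial sums $S_N:=\sum_{n=0}^N a_n$, which form an increasing sequence in $\cA_+$, are uniformly bounded in norm; then monotone completeness of the von Neumann algebra $\cA$ gives that $\bigvee_N S_N\in\cA_+$ exists and that $(S_N)$ converges to it in the strong operator topology, which -- by the convention governing infinite sums in $\BH$ -- is exactly the assertion $\sum_{n=0}^\infty a_n\in\cA_+$.

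First I would note that by hypothesis $f(\xi):=\sum_{n=0}^\infty\skal{a_n\xi}{\xi}=\sup_N\skal{S_N\xi}{\xi}$ is a finite nonnegative number for every $\xi\in\hrH$. Since each $S_N$ is positive, the sesquilinear form $(\eta,\zeta)\mapsto\skal{S_N\eta}{\zeta}$ is positive semidefinite, so the Cauchy--Schwarz inequality yields
\[
  \abs{\skal{S_N\eta}{\zeta}}\;\le\;\skal{S_N\eta}{\eta}^{1/2}\,\skal{S_N\zeta}{\zeta}^{1/2}\;\le\;f(\eta)^{1/2}f(\zeta)^{1/2}
\]
for all $\eta,\zeta\in\hrH$ and all $N$; in particular the family $\{S_N\}$ is bounded in the weak operator topology, with a bound independent of $N$ for each fixed pair $(\eta,\zeta)$.

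The decisive step is to upgrade this to a uniform norm bound, and this is where the argument has to be carried out carefully: one cannot simply assert that the limiting quadratic form $f$ is bounded, since a pointwise-finite supremum of bounded quadratic forms need not be bounded. Instead I would invoke the uniform boundedness principle twice. Fixing $\eta$, the bounded linear functionals $\zeta\mapsto\skal{\zeta}{S_N\eta}$ on $\hrH$ -- which by the Riesz representation theorem have norm $\norm{S_N\eta}$ and satisfy $\abs{\skal{\zeta}{S_N\eta}}=\abs{\skal{S_N\eta}{\zeta}}\le f(\eta)^{1/2}f(\zeta)^{1/2}$ -- are pointwise bounded in $N$, so $\sup_N\norm{S_N\eta}<\infty$. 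A second application of the uniform boundedness principle, now to the pointwise-norm-bounded family $\{S_N\}\subseteq\BH$, gives $C:=\sup_N\norm{S_N}<\infty$.

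Finally, $(S_N)$ is an increasing sequence in $\cA_+$ bounded above by $C\Eins\in\cA$; hence it possesses a supremum $s:=\bigvee_N S_N\in\cA_+$, and $(S_N)$ converges to $s$ in the strong operator topology by the standard monotone convergence theorem for von Neumann algebras. This means precisely that $\sum_{n=0}^\infty a_n=s$ exists in $\cA_+$, as claimed. I expect the passage from the pointwise hypothesis to the uniform bound (the two applications of the uniform boundedness principle) to be the only real obstacle; everything else is routine.
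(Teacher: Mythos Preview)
Your proof is correct. Both arguments hinge on the uniform boundedness principle, but the routes differ in an instructive way. The paper passes to the square roots $s_N^{1/2}$, observes that $\norm{s_N^{1/2}\eta}^2=\skal{s_N\eta}{\eta}$ so that the hypothesis gives pointwise boundedness of $(s_N^{1/2})$ directly, applies the uniform boundedness principle once, uses operator monotonicity of the square root to conclude that $(s_N^{1/2})$ is bounded and increasing hence strongly convergent to some $b$, and finally recovers $s_N\to b^2$ via strong continuity of multiplication on bounded sets. Your argument stays with $S_N$ itself: Cauchy--Schwarz for the positive form $\skal{S_N\cdot}{\cdot}$ turns quadratic-form boundedness into weak boundedness, and then two applications of the uniform boundedness principle yield a norm bound; monotone completeness finishes. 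Your route is arguably more elementary---it avoids operator monotonicity of the square root and the multiplication-continuity step---at the modest cost of a second appeal to the uniform boundedness principle. (Incidentally, polarization would serve just as well as Cauchy--Schwarz in your first step.)
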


\begin{prf}
	Set $s_N:=\sum_{n=0}^N{a_n}\in\cA_+$. Then for $\eta\in\hrH$ we have:
	\[
		\norm{s_N^\frac{1}{2}\eta}^2 = \skal{s_N^\frac{1}{2}\eta}{s_N^\frac{1}{2}\eta} = \skal{s_N\eta}{\eta} = \textstyle\sum_{n=0}^N{\skal{a_n\eta}{\eta}}.
	\]
	Obviously, $s_N\leq s_M$ if $N\leq M$. Hence $s_N^\frac{1}{2}\leq s_M^\frac{1}{2}$, since the square root is operator monotone. Consequently, for all $\eta\in\hrH$ we have:
	\[
		\sup_{N\in\NN}\norm{s_N^\frac{1}{2}\eta}^2 = \lim_{N\to\infty}\skal{\sum_{n=0}^N{a_n}\eta}{\eta} = \sum_{n=0}^\infty{\skal{a_n\eta}{\eta}} < \infty.
	\]

	Applying the Principle of Uniform Boundedness yields $\sup_{N\in\NN}\norm{s_N^\frac{1}{2}}<\infty$. Hence $(s_N^\frac{1}{2})_N$ is a bounded and increasing sequence of elements in $\cA_+$, which converges strongly to an element $b\in\cA_+$. Therefore, the limit $\sum_{n=0}^\infty{a_n}= \stlim_N s_N= \stlim_N\left(s_N^\frac{1}{2}\; s_N^\frac{1}{2}\right)=b^2$ exists by \st-continuity of multiplication on bounded sets.
\end{prf}

\begin{prop}\label{prop:transBH}
	Let $T:\BH\to\BH$ be a Markov operator and $p\in\BH$ an orthogonal projection. Then the following statements are equivalent:
	\begin{enumerate}[label=(\roman*)]
		\item\label{item:transBH:trans}
			$p$ is transient.
		\item\label{item:transBH:tXiSummierbar}
			There is a family of unit vectors $(\xi_j)_{j\in J}\subseteq\hrH$ such that $p\hrH\subseteq\clSPAN{\xi_j}{j\in J}$ and each rank one operator $t_{\xi_j}$ is $T$-summable.
		\item\label{item:transBH:tXiEtaEtaSummierbar}
			There is a family of unit vectors $(\xi_j)_{j\in J}\subseteq\hrH$ such that $p\hrH\subseteq\clSPAN{\xi_j}{j\in J}$ and   $\sum\limits_{n=0}^\infty{\skal{T^n(t_{\xi_j})\eta}{\eta}}<\infty$ for all $j\in J$ and $\eta\in \hrH$.
	\end{enumerate}
\end{prop}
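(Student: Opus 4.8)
The plan is to close the triangle by proving (ii)$\Leftrightarrow$(iii) directly via \reflem{lem:SummeSkalarProdEndlForallEta}, and then (ii)$\Rightarrow$(i) and (i)$\Rightarrow$(ii) via \refthm{thm:trans}. The only piece of bookkeeping used repeatedly is that for a unit vector $\xi$ the rank one operator $t_\xi$ is precisely the orthogonal projection onto $\CC\xi$; in particular $\supp t_\xi = t_\xi$, and $t_\xi \le q$ for every orthogonal projection $q$ with $\xi\in q\hrH$ (indeed $\skal{t_\xi\eta}{\eta}=\abs{\skal{\eta}{\xi}}^2\le\norm{q\eta}^2=\skal{q\eta}{\eta}$).

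\textbf{(ii)$\Leftrightarrow$(iii).} Fix $j\in J$ and apply \reflem{lem:SummeSkalarProdEndlForallEta} to the sequence $a_n:=T^n(t_{\xi_j})\in\BH_+$: it states that $\sum_{n=0}^\infty T^n(t_{\xi_j})$ exists in $\BH_+$ — that is, $t_{\xi_j}$ is $T$-summable — if and only if $\sum_{n=0}^\infty\skal{T^n(t_{\xi_j})\eta}{\eta}<\infty$ for every $\eta\in\hrH$, the converse implication being trivial since a convergent positive series of operators has finite expectations in every vector. As this holds for every $j\in J$ separately, conditions (ii) and (iii) describe the same families of unit vectors.

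\textbf{(ii)$\Rightarrow$(i) and (i)$\Rightarrow$(ii).} For the first, take the family $(x_j)_{j\in J}:=(t_{\xi_j})_{j\in J}$ of $T$-summable elements of $\BH_+$; since $\bigvee_{j\in J}\supp t_{\xi_j}$ is the orthogonal projection onto $\clSPAN{\xi_j}{j\in J}$ and $p\hrH\subseteq\clSPAN{\xi_j}{j\in J}$, we get $p\le\bigvee_{j\in J}\supp x_j$, so $p$ is transient by \refthm{thm:trans}. For the converse, suppose $p$ is transient. By \refthm{thm:trans} there is a family $(q_\alpha)_{\alpha\in A}$ of $T$-summable orthogonal projections with $p\le\bigvee_{\alpha\in A}q_\alpha$. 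Choose for each $\alpha$ an orthonormal basis $(\xi_{\alpha,k})_{k\in K_\alpha}$ of $q_\alpha\hrH$. Each $\xi_{\alpha,k}$ is a unit vector with $t_{\xi_{\alpha,k}}\le q_\alpha$, hence $0\le T^n(t_{\xi_{\alpha,k}})\le T^n(q_\alpha)$ for all $n$ by positivity of $T$; the partial sums $\sum_{n=0}^N T^n(t_{\xi_{\alpha,k}})$ therefore form a bounded increasing sequence in $\BH_+$ (dominated by $\sum_{n=0}^\infty T^n(q_\alpha)\in\BH_+$), so they converge strongly and $t_{\xi_{\alpha,k}}$ is $T$-summable. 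Finally, $\clSPAN{\xi_{\alpha,k}}{(\alpha,k)}$ is the range of $\bigvee_{\alpha}q_\alpha$, which contains $p\hrH$; so the family $(\xi_{\alpha,k})$ witnesses (ii).

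\textbf{Main point.} There is no genuine obstacle here; the crux is simply to descend from the $T$-summable \emph{projections} furnished by \refthm{thm:trans} to rank one operators spanning their ranges — which works because a rank one subprojection of a $T$-summable projection is again $T$-summable (monotonicity of $T$ plus completeness of $\BH_+$ under bounded increasing limits) — and then to invoke \reflem{lem:SummeSkalarProdEndlForallEta} for the scalar reformulation (iii).
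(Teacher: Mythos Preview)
Your proof is correct and follows essentially the same route as the paper: you use \refthm{thm:trans} to pass between transient projections and families of $T$-summable rank one operators via orthonormal bases of the summable projections, and invoke \reflem{lem:SummeSkalarProdEndlForallEta} for the equivalence (ii)$\Leftrightarrow$(iii). The only difference is that you spell out a few routine justifications (e.g.\ why $t_\xi\le q$ and why bounded increasing partial sums converge) that the paper leaves implicit.
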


Note that condition \ref*{item:transBH:tXiEtaEtaSummierbar} is quite close to one of the classical characterizations for transience mentioned in \refsec{sec:klassisch}. For a stronger condition in the finite dimensional case compare \refthm{thm:trans_endl}.

\begin{prf}
	\begin{description}[font=\normalfont\mdseries\em]
		\item[\ref*{item:transBH:trans} $\folgt$ \ref*{item:transBH:tXiSummierbar}:]
			Since $p$ is transient, there exists a family of $T$-summable orthogonal projections $(p_i)_{i\in I}\subseteq\BH$ such that $p\leq \bigvee_{i\in I}p_i$.

			For each $i\in I$ choose an orthogonal basis $(\xi_j)_{j\in J_i}$ for $p_i\hrH$. Set $J:=\bigcup_{i\in I} J_i$ then for each $j\in J$ there is an $i\in I$ with $j\in J_i$ and $\sum_{n=0}^\infty{T^n(t_{\xi_j})} \leq\sum_{n=0}^\infty{T^n(p_i)}\in\BH_+$. Additionally,  $\clSPAN{\xi_j}{j\in J}=\left(\bigvee_{i\in I}p_i\right)\hrH\supseteq p\hrH$.
		\item[\ref*{item:transBH:tXiSummierbar} $\folgt$ \ref*{item:transBH:trans}:]
			Since $p\hrH\subseteq\clSPAN{\xi_j}{j\in J}=\left(\bigvee_{j\in J}t_{\xi_j}\right)\hrH$, we have $p\leq\bigvee_{j\in J}t_{\xi_j}$. Hence $p$ is transient by \refthm{thm:trans}.
			\item[\ref*{item:transBH:tXiSummierbar} $\eqvt$ \ref*{item:transBH:tXiEtaEtaSummierbar}:]
			Direct consequence of \reflem{lem:SummeSkalarProdEndlForallEta}.\qedhere
	\end{description}
\end{prf}

In the following theorem we characterize support projections of potentials with \emph{separable range}. Clearly, these orthogonal projections are transient and by \refprop{prop:suppEbenfallsSuperharm} they are superharmonic.

\begin{thm}\label{thm:potentialSupp}
	Let $T:\BH\to\BH$ be a Markov operator and $q\in\BH$ an orthogonal projection such that $q\hrH$ is \emph{separable}. If $q$ is \emph{superharmonic} then the following statements are equivalent:
	\begin{enumerate}[label=(\roman*)]
		\item\label{item:potentialSupp:PotentialTraeger}
			There exists a potential $y\in\cApot$ such that $q=\supp y$.
		\item\label{item:potentialSupp:tXiSummierbar}
			There is an orthonormal basis $(\xi_j)_{j\in J}$ for $q\hrH$ such that each rank one operator $t_{\xi_j}$ is $T$-summable.
		\item\label{item:potentialSupp:tXiEtaEtaSummierbar}
			There is an orthonormal basis $(\xi_j)_{j\in J}$ for $q\hrH$ such that $\sum\limits_{n=0}^\infty{\skal{T^n(t_{\xi_j})\eta_0}{\eta_0}}<\infty$ for all $j\in J$ and $\eta_0\in q\hrH$.
	\end{enumerate}
\end{thm}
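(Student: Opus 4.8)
The plan is to establish the two implications $\ref*{item:potentialSupp:PotentialTraeger}\Rightarrow\ref*{item:potentialSupp:tXiSummierbar}$ and $\ref*{item:potentialSupp:tXiSummierbar}\Rightarrow\ref*{item:potentialSupp:PotentialTraeger}$, together with the equivalence $\ref*{item:potentialSupp:tXiSummierbar}\Leftrightarrow\ref*{item:potentialSupp:tXiEtaEtaSummierbar}$. The observation underlying every step is that superharmonicity of $q$ gives $T^n(q)\leq q$ for all $n$ (apply monotonicity of $T$ to $T(q)\leq q$ inductively), so that for each unit vector $\xi\in q\hrH$ one has $0\leq T^n(t_\xi)\leq T^n(q)\leq q$ and hence $T^n(t_\xi)=q\,T^n(t_\xi)\,q$. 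Thus the corner $q\BH q$ is $T$-invariant and contains every operator appearing below; in particular, whenever $\sum_n T^n(t_\xi)$ exists it again lies in $q\BH q$.

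For $\ref*{item:potentialSupp:PotentialTraeger}\Rightarrow\ref*{item:potentialSupp:tXiSummierbar}$ I would feed the potential $y$ with $\supp y=q$ into \reflem{lem:PotentialTraegerApprox}\,\ref*{item:PotentialTraegerApprox:projections}, obtaining an increasing sequence $(p_m)_{m\in\NN}$ of $T$-summable orthogonal projections with $\bigvee_m p_m=q$. Since $q\hrH$ is separable, I can build an orthonormal basis of $q\hrH$ adapted to this filtration: pick an orthonormal basis of $p_1\hrH$, adjoin one of $(p_2-p_1)\hrH$, then one of $(p_3-p_2)\hrH$, and so on; the union is a countable orthonormal basis of $\overline{\bigcup_m p_m\hrH}=q\hrH$, and each of its vectors $\xi$ lies in some $p_m\hrH$, so $t_\xi\leq p_m$ and $\sum_n T^n(t_\xi)\leq\sum_n T^n(p_m)\in\BH_+$; hence each $t_\xi$ is $T$-summable.

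The equivalence $\ref*{item:potentialSupp:tXiSummierbar}\Leftrightarrow\ref*{item:potentialSupp:tXiEtaEtaSummierbar}$ rests on \reflem{lem:SummeSkalarProdEndlForallEta}. The only thing to notice is that the ostensibly weaker condition \ref*{item:potentialSupp:tXiEtaEtaSummierbar}, which demands finiteness of $\sum_n\skal{T^n(t_{\xi_j})\eta_0}{\eta_0}$ merely for $\eta_0\in q\hrH$, already forces the corresponding finiteness for every $\eta\in\hrH$: from $T^n(t_{\xi_j})=q\,T^n(t_{\xi_j})\,q$ we get $\skal{T^n(t_{\xi_j})\eta}{\eta}=\skal{T^n(t_{\xi_j})(q\eta)}{q\eta}$, and then \reflem{lem:SummeSkalarProdEndlForallEta}, applied in $\cA=\BH$, yields that $\sum_n T^n(t_{\xi_j})$ exists in $\BH_+$, i.e.\ $t_{\xi_j}$ is $T$-summable. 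The reverse implication is immediate.

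For $\ref*{item:potentialSupp:tXiSummierbar}\Rightarrow\ref*{item:potentialSupp:PotentialTraeger}$ I would assemble a single potential from the basis. Separability makes the index set $J$ countable. Put $y_j:=\sum_n T^n(t_{\xi_j})\in\cApot$; by the running observation $y_j\in q\BH q$, so $\supp y_j\leq q$, while $y_j\geq t_{\xi_j}$ gives $\supp y_j\geq t_{\xi_j}$. Choosing $c_j>0$ with $\sum_j c_j\norm{y_j}<\infty$, the series $y:=\sum_j c_j y_j$ converges in operator norm; each finite partial sum is again a potential (a finite positive combination of potentials is a potential, and $\cApot$ is norm-closed, by \refcor{cor:cApotClosedCone}), so $y\in\cApot$. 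Finally $y=qyq$ forces $\supp y\leq q$, and $y\geq c_j y_j\geq c_j t_{\xi_j}$ for every $j$ forces $\supp y\geq\bigvee_j t_{\xi_j}=q$; hence $q=\supp y$. The step costing the most care is this last one: the norms $\norm{y_j}$ have to be finite (they are, precisely because $T$-summability of $t_{\xi_j}$ means $y_j$ genuinely exists in $\BH_+$), and the support of $y$ must come out to be exactly $q$ — which is where both the inclusion $\supp y_j\leq q$ (hence the superharmonicity hypothesis on $q$) and the completeness of the chosen orthonormal basis are indispensable.
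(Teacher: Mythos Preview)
Your proof is correct and follows the paper's structure closely: the implication \ref*{item:potentialSupp:PotentialTraeger}$\Rightarrow$\ref*{item:potentialSupp:tXiSummierbar} via \reflem{lem:PotentialTraegerApprox} and the reduction \ref*{item:potentialSupp:tXiEtaEtaSummierbar}$\Rightarrow$\ref*{item:potentialSupp:tXiSummierbar} via $T^n(t_{\xi_j})=q\,T^n(t_{\xi_j})\,q$ and \reflem{lem:SummeSkalarProdEndlForallEta} are exactly as in the paper.

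The one genuine difference is in assembling the potential for \ref*{item:potentialSupp:tXiSummierbar}$\Rightarrow$\ref*{item:potentialSupp:PotentialTraeger}. The paper builds the \emph{charge} $x:=\sum_j a_j\,t_{\xi_j}$ first and then verifies directly, via a Fubini-type interchange of the double sum $\sum_n\sum_j$, that $y:=\sum_n T^n(x)$ exists in $\BH_+$; this has the minor advantage of exhibiting the charge explicitly. You instead take $y:=\sum_j c_j y_j$ as a norm-convergent series of potentials and invoke \refcor{cor:cApotClosedCone} to conclude $y\in\cApot$. Your route is slightly cleaner, since the sum-swapping bookkeeping is already absorbed into the proof of \refcor{cor:cApotClosedCone}, and it makes transparent why only countability of $J$ (hence separability of $q\hrH$) is needed at this step.
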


The implications \ref*{item:potentialSupp:PotentialTraeger} $\folgt$ \ref*{item:potentialSupp:tXiSummierbar} $\eqvt$ \ref*{item:potentialSupp:tXiEtaEtaSummierbar} hold without the assumption of $q\hrH$ being separable.

\begin{prf}
	\begin{description}[font=\normalfont\mdseries\em]
		\item[\ref*{item:potentialSupp:PotentialTraeger} $\folgt$ \ref*{item:potentialSupp:tXiSummierbar}:]
			By \reflem{lem:PotentialTraegerApprox} there is an increasing sequence $(q_m)_{m\in\NN}$ of $T$-sum\-ma\-ble orthogonal projections such that $\bigvee_{m\in\NN} q_m = q$. For each $m\geq 1$ choose an orthonormal basis $(\xi_i)_{i\in I_{m+1}}$ for $(q_{m+1}-q_m)\hrH\subseteq q_{m+1}\hrH$ and $(\xi_i)_{i\in I_1}$ for $q_1\hrH$. Let $I:=\bigcup_{m\in\NN}I_m$ then $(\xi_i)_{i\in I}$ is an orthonormal basis for $q\hrH$. Since each $t_{\xi_i}\leq q_m$ for some $m\in\NN$ and $\sum_{n=0}^\infty{T^n(q_m)}$ exists in $\cA_+$, the same holds true for $\sum_{n=0}^\infty{T^n(t_{\xi_i})}$.

		\item[\ref*{item:potentialSupp:tXiSummierbar} $\folgt$ \ref*{item:potentialSupp:tXiEtaEtaSummierbar}:]
			Trivial.

		\item[\ref*{item:potentialSupp:tXiEtaEtaSummierbar} $\folgt$ \ref*{item:potentialSupp:PotentialTraeger}:]
			Let $(\xi_j)_{j\in J}$ be an orthonormal basis for $q\hrH$ having the above properties. Then $J$ is finite or countable (since $q\hrH$ is separable) and we can assume that $J=\{1,2,\ldots,m\}$ or $J=\{1,2,3,\ldots\}$, respectively.

			Since $T(q)\leq q$, we have $T^n(t_{\xi_j}) \leq T^n(q)\leq q$ and $T^n(t_{\xi_j})\eta_1= 0$ for all $\eta_1\in (q\hrH)^\bot$. We can decompose each $\eta\in\hrH$ into $\eta=\eta_0+\eta_1$ with $\eta_0\in q\hrH$ and $\eta_1\in (q\hrH)^\bot$ and thus have $\skal{T^n(t_{\xi_j})\eta}{\eta} = \skal{T^n(t_{\xi_j})\eta_0}{\eta_0}.$
			This yields $\sum_{n=0}^\infty\skal{T^n(t_{\xi_j})\eta}{\eta} = \sum_{n=0}^\infty\skal{T^n(t_{\xi_j})\eta_0}{\eta_0} < \infty$ for all $j\in J$ and $\eta=\eta_0+\eta_1\in\hrH$. Hence $\sum_{n=0}^\infty{T^n(t_{\xi_j})}\in\BH_+$ for every $j\in J$ by \reflem{lem:SummeSkalarProdEndlForallEta}.

			Hence defining $y_j:=\sum_{n=0}^\infty{T^n(t_{\xi_j})}$ we can find a sequence $(a_j)_{j\in J}\subseteq\; ]0,1[\,$ such that $a_j\norm{y_j} < \left(\frac{1}{2}\right)^j$ for every $j\in J$. Thus for all $\eta\in\hrH$ with $\norm{\eta}\leq 1$ we have:
			\[
				\sum_{j\in J}{a_j \sum_{n=0}^\infty{\skal{T^n(t_{\xi_j})\eta}{\eta}}}
				= \sum_{j\in J}{a_j {\skal{y_j\,\eta}{\eta}}}
				\leq \sum_{j\in J}{a_j \norm{y_j}}
				\leq \sum_{j\in J}{\left(\tfrac{1}{2}\right)^j} < \infty.
			\]

			This allows us to swap the above limits and $T$ being normal, we obtain for all $\eta\in\hrH$:\vspace*{-2ex}
			\begin{align*}
				\hspace{-1ex}\infty > \sum_{j\in J}{a_j \sum_{n=0}^\infty{\skal{T^n(t_{\xi_j})\eta}{\eta}}} = 
				\sum_{n=0}^\infty{\skal{T^n\bigl(\textstyle\sum_{j\in J} {a_j\,t_{\xi_j}}\bigr)\eta}{\eta}}.
			\end{align*}
			If we define $x:=\sum_{j\in J}{a_j t_{\xi_j}}$ then $x\geq 0$ and $q=\supp x$. Applying \reflem{lem:SummeSkalarProdEndlForallEta} again we see that $y:=\sum_{n=0}^\infty{T^n(x)}$ exists in $\BH_+$. Since $T^0(x)=x$, we have $q=\supp x \leq \supp y$.
			On the other hand, since $q$ is superharmonic, we have $\supp y=\supp\bigl(\sum_{n=0}^\infty{T^n(x)}\bigr)\leq q$.\qedhere
	\end{description}
\end{prf}

In addition to \refprop{prop:transBH} and \refthm{thm:potentialSupp} we find the following sufficient conditions for an arbitrary orthogonal projection $p\in\BH$ to be transient.

\begin{cor}\label{cor:TransCond} Let $T:\BH\to\BH$ be a Markov operator and $p\in\BH$ an orthogonal projection. The following are sufficient conditions for $p$ to be transient:
	\begin{enumerate}[label=(\roman*)]
		\item\label{item:TransCond:XiEtaInH}
			For all $\xi\in p\hrH$ the rank one operator $t_\xi$ is $T$-summable.
		\item\label{item:TransCond:XiEtaInH_ONB}
			There exists an orthonormal basis $(\xi_i)_{i\in I}$ for $p\hrH$ such that each rank one operator $t_{\xi_j}$ is $T$-summable.
	\end{enumerate}
\end{cor}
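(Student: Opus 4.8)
The plan is to read off both sufficient conditions directly from the characterisation of transience on $\BH$ furnished by \refprop{prop:transBH}, so that essentially no new work is required.

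First I would dispose of condition \ref*{item:TransCond:XiEtaInH} by reducing it to condition \ref*{item:TransCond:XiEtaInH_ONB}: picking any orthonormal basis $(\xi_i)_{i\in I}$ of $p\hrH$, every basis vector lies in $p\hrH$, so by hypothesis each rank one operator $t_{\xi_i}$ is $T$-summable, which is precisely what \ref*{item:TransCond:XiEtaInH_ONB} asserts. Thus it suffices to show that \ref*{item:TransCond:XiEtaInH_ONB} forces $p$ to be transient.

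For \ref*{item:TransCond:XiEtaInH_ONB}, let $(\xi_i)_{i\in I}$ be an orthonormal basis of $p\hrH$ with every $t_{\xi_i}$ $T$-summable. Then $(\xi_i)_{i\in I}$ is a family of unit vectors with $\clSPAN{\xi_i}{i\in I}=p\hrH$, so in particular $p\hrH\subseteq\clSPAN{\xi_i}{i\in I}$; this is exactly condition~\ref*{item:transBH:tXiSummierbar} of \refprop{prop:transBH}, and hence $p$ is transient. Equivalently, one can argue straight from \refthm{thm:trans}: since $\norm{\xi_i}=1$, each $t_{\xi_i}$ is the orthogonal projection onto $\CC\xi_i$, and $\bigvee_{i\in I}t_{\xi_i}=p$ because $(\xi_i)_{i\in I}$ is an orthonormal basis of $p\hrH$, so $(t_{\xi_i})_{i\in I}$ is a family of $T$-summable orthogonal projections with $p\leq\bigvee_{i\in I}t_{\xi_i}$, which is condition~\ref*{item:trans:projections} of \refthm{thm:trans}.

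I do not expect any genuine obstacle here: the corollary is an immediate consequence of the equivalences already established, and the only point worth noting is that a rank one operator $t_\xi$ with $\norm{\xi}=1$ is automatically an orthogonal projection, which is exactly what makes condition~\ref*{item:trans:projections} of \refthm{thm:trans} applicable without further ado.
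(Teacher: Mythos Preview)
Your proof is correct and follows essentially the same approach as the paper: reduce \ref*{item:TransCond:XiEtaInH} to \ref*{item:TransCond:XiEtaInH_ONB} by choosing any orthonormal basis of $p\hrH$, then invoke \refprop{prop:transBH}.\ref*{item:transBH:tXiSummierbar} using $p\hrH=\clSPAN{\xi_i}{i\in I}$. The alternative route via \refthm{thm:trans} you sketch is a harmless extra observation.
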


\begin{prf}
	Obviously, \ref*{item:TransCond:XiEtaInH_ONB} is a direct consequence of \ref*{item:TransCond:XiEtaInH}. If condition \ref*{item:TransCond:XiEtaInH_ONB} is satisfied $p$ is transient by \refNumX{prop:transBH}{.\ref*{item:transBH:tXiSummierbar}}, since $p\hrH=\clSPAN{\xi_i}{i\in I}$.
\end{prf}

The following example disproves some further possible conjectures.

\begin{exmp}\label{exmp:transCond}
	Consider the Hilbert space $\hrH:=\Lzwei([0,2\pi]
	)$ and let $e_n\in\hrH$ be given by  $e_n(s):=e^{ins}$, $s\in[0,2\pi]$. Then $(e_n)_{n\in\ZZ}$ is an orthonormal basis for $\hrH$. For $g\in\Linfty([0,2\pi])$ we define the multiplication operator $M_g$ on $\hrH$ by $M_{g}(f):= gf$. Clearly, one obtains $(M_{e_1})^n=M_{e_n}$, $M_{e_n}^*=M_{e_{-n}}$, and $M_{e_{-n}}\,t_{e_k}\,M_{e_n} = t_{e_{k-n}}$.
	
	Now let $T:\BH\to\BH:a\mapsto M_{e_1}^*\,a\,M_{e_1}$ and $x:=\sum_{k=1}^\infty{2^{-k}\,t_{e_k}}$. Then we have
	\begin{align*}
		\sum_{n=0}^\infty{T^n(x)} & = \sum_{n=0}^\infty{(M_{e_1}^*)^n\,x\,M_{e_1}^n} =  \sum_{n=0}^\infty{\sum_{k=1}^\infty{2^{-k}\,M_{e_{-n}}\,t_{e_k}\,M_{e_n}}} \\
		& = \sum_{k=1}^\infty{\sum_{n=0}^\infty{2^{-k}\,M_{e_{-n}}\,t_{e_k}\,M_{e_n}}} =  \sum_{k=1}^\infty{2^{-k}\,\sum_{n=0}^\infty{t_{e_{k-n}}}}.
	\end{align*}
	Since $\norm{\sum_{n=0}^\infty{t_{e_{k-n}}}}=1$ for every $k\in\NN$, this sum converges to a bounded operator $y$ on $\hrH$. Note that $\supp y\geq \supp x=\sum_{k=1}^\infty{t_{e_k}}$ and $\supp y\geq \sum_{n=0}^\infty{t_{e_{1-n}}}$. Hence $\supp y = \Eins = \ptr$ and thus every projection is transient.
	Let $f,g\in\hrH=\Lzwei([0,2\pi])$ be real-valued functions. Then for every $N\in\NN$ we have
	\begin{align*}
		\sum_{n=0}^N{ \skal{T^n(t_f)\,g}{g}} & = \sum_{n=0}^N{\skal{M_{e_n}^*\,t_f\,M_{e_n}\,g}{g}} = \sum_{n=0}^N{\skal{t_f\,M_{e_n}\,g}{M_{e_n}\,g}} \\
		& = \sum_{n=0}^N{\skal{\skal{M_{e_n}\,g}{f}\,f}{M_{e_n}\,g}} = \sum_{n=0}^N{\skal{e_n\,g}{f}\skal{f}{e_n\,g}} \\
		& = \sum_{n=0}^N{\abs{\skal{f}{e_n\,g}}^2} = \sum_{n=0}^N{\abs{\skal{fg}{e_n}}^2} = \sum_{n=0}^N{\abs{\skal{fg}{e_{-n}}}^2},
	\end{align*}
	since $\skal{fg}{e_n}=\overline{\skal{\overline{fg}}{\overline{\vphantom{f}e_n}}}=\overline{\skal{fg}{e_{-n}}}$ for all $n\in\ZZ$. Hence $\sum_{n=0}^N{\abs{\skal{fg}{e_n}}^2}\geq \frac12 \sum_{n=-N}^N{\abs{\skal{fg}{e_n}}^2}$ for every $N\in\NN$.
	Furthermore, $\widehat{fg}(n):=\skal{fg}{e_n}$ is the $n$th Fourier coefficient of $fg$. 
	
	Define $f,g\in\hrH
	$ by $f(s)=s^{-1/8}$ and $g(s)=s^{-3/8}$, $s\in[0,2\pi]$. Then $fg\in\Leins([0,2\pi])$ but $fg\notin\Lzwei([0,2\pi])$. It follows that $\sum_{n=0}^N{ \skal{T^n(t_f)\,g}{g}} = \sum_{n=0}^N{\abs{\skal{fg}{e_n}}^2}\Geht{N\to\infty}\infty$. On the other hand, $f^2\in\Lzwei([0,2\pi])$ and hence $\sum_{n=0}^\infty{\skal{T^n(t_f)\,f}{f}}<\infty$.

	This example shows:
	\begin{enumerate}[label=(\arabic*)]
		\item If $y\in\cApot$ and $q:=\supp y$ such that $q\hrH$ is separable then by \refthm{thm:potentialSupp} there is always an orthonormal basis $(\xi_i)_{i\in I}$ of $q\hrH$ such that each $t_{\xi_i}$ is $T$-sum\-ma\-ble, but this does not exclude the existence of a vector $\xi\in q\hrH$ for which $t_\xi$ is not $T$-summable. In particular, Condition \ref*{item:TransCond:XiEtaInH} of \refcor{cor:TransCond} is no necessary condition.
		\item Unlike the classical situation it might happen that there is no vector $\xi\neq0$ in the range of a transient projection $p$ such that the corresponding rank one operator $t_\xi$ is $T$-summable: In the above example $p:=\frac{1}{\norm{f}^2}\,t_f$ is a transient projection with range $p\hrH=\Set{\lambda f}{\lambda\in\CC}$ but $t_f$ is not $T$-summable. In particular, even Condition \ref*{item:TransCond:XiEtaInH_ONB} of \refcor{cor:TransCond} is not a necessary condition.
		\item We also see that $\sum_{n=0}^\infty{ \skal{T^n(t_\xi)\,\xi}{\xi}}<\infty$ for $\xi\in\hrH$ does not imply that $t_\xi$ is $T$-summable, equivalently $\sum_{n=0}^\infty{\skal{T^n(t_\xi)\,\eta}{\eta}}<\infty$ for all $\eta\in\hrH$.
	\end{enumerate}

\end{exmp}

\section{Recurrent Projections}\label{sec:rec-proj}

After treating transient projections, we examine the recurrent part of the algebra now.

\begin{defn}\label{defn:recurrent}
	Let $\cA\subseteq\BH$ be a von Neumann algebra and $T:\cA\to\cA$ a Markov operator. We call an orthogonal projection  $p\in\cA$
	\begin{itemize}
		\item \emph{recurrent (w.r.t.\ $T$)} if $p\leq \ptr(T)^\bot=\Eins - \ptr(T)$,
		\item \emph{positive recurrent (w.r.t.\ $T$)} if there is a family of stationary normal states $(\Fi_i)_{i\in I}$ on $\cA$ such that $p\leq\bigvee_{i\in I}(\supp\Fi_i)$,
		\item \emph{skew recurrent (w.r.t.\ $T$)} if there is a family of stationary normal states $(\psi_i)_{i\in I}$ on $\cA$ which is faithful on $p\cA p$.
	\end{itemize}
	
	Furthermore, we define the \emph{maximal positive recurrent projection} as the supremum of all positive recurrent projections and denote it by  $\ppr(T)$  (or $\ppr$ for short).
\end{defn}

For commutative $\cA$ the notions of positive recurrence and skew recurrence coincide. However, this is no longer true for general non-commutative algebras (cf.\ \refexmp{exmp:pos_rekur}).

Clearly, the positive recurrent projections form a complete lattice. In particular, $\ppr$ is positive recurrent, too, and can be written as:
\[
	\ppr=\bigvee\Set{\supp \Fi}{\Fi\circ T=\Fi\in\Zst(\cA)}.
\]

\begin{rem}\label{rem:pRsubharm}
	By \refthm{thm:superharm_complete_lattice} the maximal positive recurrent projection $\ppr$ is subharmonic, i.e.\ $T(\ppr)\geq\ppr$.
\end{rem}

\begin{prop}\label{prop:uma05_rec_orth_trans}\emph{\cite{uma}}
	\begin{enumerate}[label=(\alph*)]
		\item\label{item:uma05_r_t:ppr_rec} The maximal positive recurrent projection $\ppr$ is recurrent, i.e.\ $\ppr\leq\ptr^\bot$.
		\item\label{item:uma05_r_t:sigma-finite} If $\cA$ is $\sigma$-finite then $\ppr$ is the support of a stationary normal state $\Fi\in\Zst(\cA)$.
	\end{enumerate}
\end{prop}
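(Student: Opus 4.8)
The plan is to prove part \ref*{item:uma05_r_t:ppr_rec} first, since part \ref*{item:uma05_r_t:sigma-finite} will rely on it together with a standard exhaustion/maximality argument. For \ref*{item:uma05_r_t:ppr_rec} the key point is already recorded in the preliminaries: the support $p$ of a stationary normal state $\Fi$ is subharmonic, hence $p^\bot$ is superharmonic. I would then observe that a subharmonic projection $p$ with a stationary state supported on it cannot have a non-zero transient subprojection. Concretely, suppose $0\neq e\leq p$ is transient, so by \refthm{thm:trans} there is a potential $y\in\cApot$ with $e\leq\supp y$; then $\Fi(y)<\infty$ and, using that $T$ is normal and $\Fi$ stationary, $\Fi(y)=\Fi(\sum_n T^n(x))=\sum_n\Fi(T^n(x))=\sum_n\Fi(x)$ for the charge $x=y-T(y)$, which forces $\Fi(x)=0$, hence $\Fi(y)=0$ and $\supp y\leq p^\bot$, contradicting $0\neq e\leq\supp y\wedge p$. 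Therefore $\supp\Fi$ is orthogonal to every transient projection, i.e.\ $\supp\Fi\leq\ptr^\bot$. Taking the supremum over all stationary normal states gives $\ppr=\bigvee\Set{\supp\Fi}{\Fi\circ T=\Fi\in\Zst(\cA)}\leq\ptr^\bot$, using that $\ptr^\bot$ is an upper bound for this family.

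For part \ref*{item:uma05_r_t:sigma-finite}, assume $\cA$ is $\sigma$-finite, so it admits a faithful normal state. I would argue that the supremum of the support projections of countably many stationary normal states is again realized as the support of a single stationary normal state: if $(\Fi_k)_{k\in\NN}$ are stationary normal states then $\Fi:=\sum_k 2^{-k}\Fi_k$ is a stationary normal state (normality is preserved under norm-convergent convex combinations, and $\Fi\circ T=\Fi$ since each $\Fi_k\circ T=\Fi_k$), and $\supp\Fi=\bigvee_k\supp\Fi_k$. The remaining issue is that $\ppr$ is a priori a supremum over a possibly uncountable family of positive recurrent projections. Here $\sigma$-finiteness is exactly what rescues us: in a $\sigma$-finite von Neumann algebra every family of mutually orthogonal non-zero projections is countable, so by a Zorn's lemma / maximal-orthogonal-family argument one can find a maximal countable orthogonal family $(q_k)$ of non-zero projections, each dominated by the support of some stationary normal state $\Fi_k$, whose supremum equals $\ppr$; then the single state $\Fi=\sum_k 2^{-k}\Fi_k$ has support $\geq\bigvee_k q_k=\ppr$, while $\supp\Fi\leq\ppr$ since $\supp\Fi$ is itself positive recurrent. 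Hence $\supp\Fi=\ppr$.

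The main obstacle is the reduction in part \ref*{item:uma05_r_t:sigma-finite} from an arbitrary (uncountable) supremum of positive recurrent projections to a countable one: one must verify carefully that a maximal orthogonal family of ``pieces'' of supports of stationary states actually has supremum $\ppr$ (not merely some smaller subharmonic projection), which uses that if $q\leq\ppr$ but $q\neq\ppr$ then $\ppr-q$ is still non-zero and, being $\leq\bigvee_i\supp\Fi_i$, must have a non-zero subprojection below some $\supp\Fi_i$ — contradicting maximality. A clean way to see this last point is that $q\mapsto\ppr-q$ behaves well under the lattice operations and that the supports of stationary normal states are ``hereditarily dense'' in $\ppr$ in the appropriate sense. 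Everything else — normality of the averaged state, stationarity, and the elementary computation $\Fi(y)=\sum_n\Fi(x)$ — is routine once the Riesz decomposition theorem (\refthm{thm:RieszDecompThm}) and \refthm{thm:trans} are in hand.
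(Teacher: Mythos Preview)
Your argument for part \ref*{item:uma05_r_t:ppr_rec} is essentially the paper's: both compute $\Fi(y)=\sum_n\Fi(T^n(x))=\sum_n\Fi(x)$ for a potential $y$ with charge $x$ and a stationary normal state $\Fi$, deduce $\Fi(y)=0$, hence $\supp y\leq(\supp\Fi)^\bot$, and take suprema on both sides. Your contradiction wrapper and the subharmonicity remark are unnecessary but harmless.

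Part \ref*{item:uma05_r_t:sigma-finite} has a genuine gap. Your maximality argument needs that if $0\neq r\leq\ppr=\bigvee_i\supp\Fi_i$ then $r$ has a non-zero subprojection below some $\supp\Fi_i$. That lattice-theoretic claim fails in non-commutative algebras: in $M_2(\CC)$ take $p_1=e_{11}$, $p_2=e_{22}$, and $r$ the projection onto $\CC(e_1+e_2)$; then $r\leq p_1\vee p_2$ yet $r\wedge p_1=r\wedge p_2=0$. Your fallback ``hereditary density'' claim---that \emph{some} stationary $\Fi$, not necessarily among the original $\Fi_i$, satisfies $r\wedge\supp\Fi\neq0$---is not something you can simply quote; the only route I see to it passes through the very statement you are proving (once a stationary $\Fi_0$ with $\supp\Fi_0=\ppr$ is known to exist, take $\Fi=\Fi_0$), so invoking it here is circular.

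The paper avoids this by extracting a countable subfamily directly rather than via a maximal orthogonal family: well-order $I$, set $q_j:=\bigvee_{i\preceq j}\supp\Fi_i-\bigvee_{i\precneqq j}\supp\Fi_i$; these are mutually orthogonal with $\sum_j q_j=\ppr$, so $\sigma$-finiteness forces $I_0:=\{j:q_j\neq0\}$ to be countable, and then $\ppr=\bigvee_{i\in I_0}\supp\Fi_i$, whence $\Fi:=\sum_{i\in I_0}2^{-i}\Fi_i$ does the job. An even quicker route is to quote the standard fact that in a $\sigma$-finite von Neumann algebra every supremum of projections equals the supremum of a countable subfamily (pick a faithful normal state $\omega$ and finite sets $F_n\subseteq I$ with $\omega\bigl(\bigvee_{i\in F_n}\supp\Fi_i\bigr)\to\omega(\ppr)$). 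Either way, no hereditary-density step is needed.
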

For convenience we include the short proof.
\begin{prf}
	\begin{description}[font=\normalfont\mdseries\em]
		\item[~\;\ref*{item:uma05_r_t:ppr_rec}] Let $y\in\cApot$ be a potential with charge $x:=y-T(y)$ and $\Fi\in\Zst(\cA)$ a stationary normal state, then
		\[
			\Fi(y)=\Fi\biggl(\sum_{n=0}^\infty T^n(x)\biggr)=\sum_{n=0}^\infty\Fi\bigl(T^n(x)\bigr)=\sum_{n=0}^\infty\Fi(x).
		\]
		Therefore, $\Fi(x)=0=\Fi(y)$ which implies $\supp y\leq(\supp\Fi)^\bot$. Since this holds for every stationary normal state  $\Fi\in\Zst(\cA)$ and since  $\ptr$ coincides with the supremum of the support projections of all potentials $y\in\cApot$, it follows that $\ptr\leq\ppr^\bot$.
		\item[~\;\ref*{item:uma05_r_t:sigma-finite}] By definition there is a family $(\Fi_i)_{i\in I}\subseteq\Zst(\cA)$ of stationary normal states such that $\ppr=\bigvee_{i\in I}(\supp\Fi_i)$. Choose a well-ordering $\preceq$ on $I$ and for $j\in I$ set $q_j:=\bigvee_{i\preceq j}(\supp\Fi_i)-\bigvee_{i\precneqq j}(\supp\Fi_i)$. 
		Then $(q_j)_{j\in I}$ is a family of mutually orthogonal projections.
		Since $\cA$ is $\sigma$-finite, the set $I_0:=\Set{j\in I}{q_j\neq0}\subseteq I$ is at most countable and $\ppr=\sum_{j\in I}q_j=\sum_{j\in I_0}q_j=\bigvee_{i\in I_0}(\supp\Fi_i)$. If $I$ is finite, we set $\Fi:=\frac{1}{\abs{I}\vphantom{^2}}\sum_{i\in I}\Fi_i$. Otherwise, we identify $I_0$ with $\NN$ and define $\Fi:=\sum_{i\in\NN}2^{-i}\,\Fi_i$ to obtain the desired state.\qedhere
	\end{description}
\end{prf}

From \refNumX{prop:uma05_rec_orth_trans}{.\ref*{item:uma05_r_t:ppr_rec}} follows that every positive recurrent projection is also recurrent and an orthogonal projection $\cA\ni p\leq\ppr^\bot$ cannot be skew recurrent.

\begin{defn}\label{defn:nullrecurrent}
	We call $\pnr:=\Eins-(\ptr+\ppr)$ the \emph{maximal null recurrent projection} and every orthogonal projection $p\leq\pnr$ is said to be \emph{null recurrent}.
\end{defn}

In the classical setting, by the so called Hopf decomposition, superharmonic elements are fixed on recurrent parts (see, e.g., \cite[Thm.\,3.1.3]{kre}). In the non-commutative situation an analogous statement clearly holds on positive recurrent parts. The following theorem shows that this remains true on the entire recurrent part.

\begin{thm}\label{thm:recurrence}
	Let $\cA\subseteq\BH$ be a von Neumann algebra, $T:\cA\to\cA$ a Markov operator, and $p\in\cA$ an orthogonal projection. Then the following statements are equivalent:
	\begin{enumerate}[label=(\roman*)]
		\item\label{item:rec:rec} $p$ is recurrent, i.e.\ $p\leq\ptr^\bot$.
		\item\label{item:rec:superharm-fixed} For all $a\in\cA_+$ with $T(a)\leq a$ we have
			\[
				pT(a)p=pap.
			\]
	\end{enumerate}
\end{thm}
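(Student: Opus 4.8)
The plan is to prove the two implications separately, relying on the Riesz decomposition theorem and on the characterization of transient projections via potentials. For the direction \ref{item:rec:superharm-fixed}$\folgt$\ref{item:rec:rec}, I would argue by contraposition: if $p$ is not recurrent, then $p\wedge\ptr\neq 0$, so there is a non-zero transient subprojection, hence (by \refthm{thm:trans}) a non-zero $T$-summable element $x\in\cA_+$ with $0\neq\supp x\leq\ptr$, and consequently a non-zero potential $y:=\sum_{n=0}^\infty T^n(x)$ whose support is dominated by $\ptr$. Now $y$ is superharmonic by \refrem{rem:potenialSuperharm}, but $T^n(y)\to 0$ strongly, so $pT(y)p=pyp$ cannot hold for a projection $p$ with $pyp\neq 0$ — more carefully, iterating \ref{item:rec:superharm-fixed} would force $pyp=pT^n(y)p\to 0$, giving $pyp=0$; choosing $p$ to be a subprojection of $\supp y$ (which we may, since $\supp y$ is transient and non-zero) yields a contradiction. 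So the failure of recurrence obstructs \ref{item:rec:superharm-fixed}.

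For the main direction \ref{item:rec:rec}$\folgt$\ref{item:rec:superharm-fixed}, let $p\leq\ptr^\bot$ and let $a\in\cA_+$ be superharmonic. By the Riesz decomposition theorem \refKlammernX{thm:RieszDecompThm}{.\ref*{item:RieszDecompThm:zerlegung}} write $a=y+h$ with $y\in\cApot$ and $0\leq h\in\fix T$. Then $pT(a)p-pap=pT(y)p-pyp$, so it suffices to show $pyp=pT(y)p$, i.e.\ $p(y-T(y))p=0$, i.e.\ $p x p=0$ where $x$ is the charge of $y$. The key point is that the charge $x$ is $T$-summable, hence $\supp x\leq\ptr$ (it is transient by \refthm{thm:trans}\ref{item:trans:positive}, since $\supp x\leq\supp\sum_n T^n(x)=\supp y\leq\ptr$), so $\supp x\perp p$ and therefore $pxp=0$.

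Actually the cleanest route avoids splitting off $h$ and works directly: if $a$ is superharmonic then $a-T(a)\geq 0$ and, writing $h:=\stlim_n T^n(a)\in\fix T$ and $y:=a-h$, the element $y$ is a potential with charge $a-T(a)=y-T(y)$, which is $T$-summable with support contained in $\supp y\leq\ptr$. Hence $p(a-T(a))p=0$, which is exactly $pap=pT(a)p$. The main obstacle is verifying that the support of the charge really is transient, i.e.\ dominated by $\ptr$; this hinges on the (easy but essential) fact that $\supp x\leq\supp\sum_{n=0}^\infty T^n(x)$ because $x=T^0(x)$ appears as a summand, combined with \reflem{lem:TraegerBleibenGleich} to handle supports cleanly, and on condition \ref{item:trans:positive} of \refthm{thm:trans} to conclude transience of $\supp x$ from $T$-summability of $x$. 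Everything else is a direct computation with the Riesz decomposition.
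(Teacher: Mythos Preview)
Your argument for \ref{item:rec:rec}$\folgt$\ref{item:rec:superharm-fixed} is correct and is essentially the paper's proof: use the Riesz decomposition $a=y+h$, observe that the charge $x=a-T(a)=y-T(y)$ is $T$-summable, hence $\supp x\leq\ptr$, so $p\leq\ptr^\bot$ gives $pxp=0$. The paper phrases it by compressing $y$ and $T(y)$ directly (both have support $\leq\ptr$), but the content is the same.

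The contrapositive direction has a genuine gap. From ``$p$ is not recurrent'', i.e.\ $p\not\leq\ptr^\bot$, you conclude $p\wedge\ptr\neq 0$. This is a commutative reflex that fails in the non-commutative setting: two projections can have zero infimum without being orthogonal. Concretely, in \refexmp{exmp:pos_rekur} one has $\ptr=e_{11}$ and $q=\tfrac12\left(\begin{smallmatrix}1&1\\1&1\end{smallmatrix}\right)$; then $q\not\leq\ptr^\bot$, yet $q\wedge\ptr=0$ since both are distinct rank-one projections. So your first step produces no transient subprojection of $p$ to work with, and the subsequent ``choosing $p$ to be a subprojection of $\supp y$'' is not available --- $p$ is fixed.

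The correct non-commutative replacement, which the paper uses, is that $p\not\leq\ptr^\bot$ is equivalent to $p\,\ptr\,p\neq 0$. Writing $\ptr=\bigvee_i\supp x_i$ with each $x_i$ $T$-summable, one checks that if $p\,x_i\,p=0$ for all $i$ then $p\leq(\supp x_i)^\bot$ for all $i$, hence $p\leq\ptr^\bot$; so there is some $i_0$ with $p\,x_{i_0}\,p\neq 0$. Setting $y:=\sum_n T^n(x_{i_0})$ gives a superharmonic element with $pyp-pT(y)p=p\,x_{i_0}\,p\neq 0$, contradicting \ref{item:rec:superharm-fixed}. Your ``iterating'' idea ($pyp=pT^n(y)p\to 0$) would also finish the job once you have $pyp\neq 0$, but the point is that you must produce the witness $y$ via $p\,\ptr\,p\neq 0$, not via $p\wedge\ptr\neq 0$.
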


\begin{prf}
	First, let $p\leq\ptr^\bot$ and let $a\in\cA_+$ be superharmonic. By the Riesz decomposition theorem \refKlammern{thm:RieszDecompThm} there are $y\in\cApot$ and $0\leq h\in\fix T$ such that $a=y+h$. Since $\supp y\leq\ptr$, we have
	\[
		pT(a)p=pT(y+h)p=pT(y)p+pT(h)p=php=pap.
	\]
	Conversely, assume that $p\not\leq\ptr^\bot$. Then $p\,\ptr\, p\neq0$. Since $\ptr$ is transient, by \refthm{thm:trans} there is a family of $T$-summable elements $(x_i)_{i\in I}\subseteq\cA_+$ such that $\ptr\leq\bigvee_{i\in I}\supp x_i$. Therefore,
	\[
		0\neq p\,\ptr\, p\leq p\left(\bigvee\nolimits_{\!\!i\in I}\supp x_i\right) p = \bigvee\nolimits_{\!\!i\in I}(p\,\supp x_i\,p)
	\]
	and there is an $i_0\in I$ with $p x_{i_0} p\neq0$. Let $y:=\sum_{n=0}^\infty T^n(x_{i_0})$. Then $y$ is superharmonic and 
	\[
		pyp-pT(y)p=p(y-T(y))p=p x_{i_0} p\neq0.\qedhere
	\]
\end{prf}

\begin{rem}\label{rem:recurrent}
	If $\ptr=0$ and $p$ is the support of a stationary normal state then $T(p)=p$. This can also be deduced from \refpropX{prop:trans_and_superharm_in_sonderfaellen}{.\ref*{item:trans_superharm:ptr=0}}.
\end{rem}

Now we investigate the relation between skew recurrence and (positive) recurrence.

\begin{prop}\label{prop:skew_rek_charak} Let $T:\cA\to\cA$ be a Markov operator.
\begin{enumerate}[label=(\alph*)]
	\item\label{item:skew_rek_charak:equiv} For an orthogonal projection $p\in\cA$ the following statements are equivalent:
		\begin{enumerate}[label=(\roman*)]
			\item\label{item:skew_rek_charak:equiv:defn} $p$ is skew recurrent.
			\item\label{item:skew_rek_charak:equiv:mitppr} 
				$p\wedge\ppr^\bot=0$.
		\end{enumerate}
	\item\label{item:skew_rek_charak:pos=>skew} Every positive recurrent projection is also skew recurrent.
\end{enumerate}
\end{prop}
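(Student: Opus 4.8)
The plan is to rephrase the statement ``$(\psi_i)_{i\in I}$ is faithful on $p\cA p$'' as a condition on the single projection $q:=\bigvee_{i\in I}\supp\psi_i$, and then to deduce everything from the identity $\ppr=\bigvee\Set{\supp\Fi}{\Fi\circ T=\Fi\in\Zst(\cA)}$ recorded after \refdefn{defn:recurrent}. The starting point is the elementary observation that, for a normal state $\psi$ on $\cA$ with support projection $s:=\supp\psi$ and any $x\in\cA$, one has $\psi(x^*x)=0$ if and only if $xs=0$: indeed $\psi(x^*x)=\psi(sx^*xs)=\psi\bigl((xs)^*(xs)\bigr)$ and $\psi$ is faithful on $s\cA s$. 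Applying this to each $\psi_i$, we see that $\psi_i(x^*x)=0$ for every $i\in I$ is equivalent to $x\,\supp\psi_i=0$ for every $i$, hence (since $\ker x$ is closed) to $xq=0$, i.e.\ to $r(x)\le q^\bot$, where $r(x):=\supp(x^*x)\in\cA$ denotes the right support projection of $x$, the smallest projection with $x\,r(x)=x$.

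The key step is then the claim that the family $(\psi_i)_{i\in I}$ of stationary normal states is faithful on $p\cA p$ if and only if $p\wedge q^\bot=0$. For the ``only if'' part I would argue by contraposition: if $e:=p\wedge q^\bot\ne0$, then $e$ is a nonzero projection lying in $p\cA p$ with $eq=0$, so $\psi_i(e^*e)=0$ for all $i$ although $e\ne0$, and faithfulness fails. For the ``if'' part, note that $x\in p\cA p$ forces $x=xp$ and hence $r(x)\le p$; combined with the previous paragraph, any $x\in p\cA p$ with $\psi_i(x^*x)=0$ for all $i$ satisfies $r(x)\le p\wedge q^\bot=0$, so $x=0$. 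I expect this step, namely keeping track of the various support projections and of the order relation, to be the only place demanding a little care.

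Granting the claim, the proof concludes quickly. For part (a): if $p$ is skew recurrent, witnessed by $(\psi_i)_{i\in I}$, then each $\supp\psi_i$ is positive recurrent, whence $q=\bigvee_i\supp\psi_i\le\ppr$ and so $0=p\wedge q^\bot\ge p\wedge\ppr^\bot\ge0$, i.e.\ $p\wedge\ppr^\bot=0$; conversely, if $p\wedge\ppr^\bot=0$, apply the claim to the family of \emph{all} stationary normal states on $\cA$, for which $q=\ppr$, to conclude that this family is faithful on $p\cA p$, so $p$ is skew recurrent. For part (b): if $p$ is positive recurrent then $p\le\ppr$, hence $p\wedge\ppr^\bot\le\ppr\wedge\ppr^\bot=0$, and part (a) gives that $p$ is skew recurrent.
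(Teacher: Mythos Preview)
Your proof is correct and follows essentially the same route as the paper's: both hinge on the observation that a family $(\psi_i)$ of normal states is faithful on $p\cA p$ precisely when $p\wedge\bigl(\bigvee_i\supp\psi_i\bigr)^\bot=0$, and then specialize to the family of all stationary normal states, for which this supremum is $\ppr$. The only organizational difference is that you isolate this equivalence as a standalone claim and derive (b) from (a), whereas the paper argues each implication ad hoc and proves (b) directly from the definition; your version is slightly more explicit (via right support projections) at the step where the paper simply asserts the existence of a nonzero projection $r\le p$ annihilated by every stationary state.
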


\begin{prf}
	\begin{description}[font=\normalfont\mdseries\em]
		\item[\ref{item:skew_rek_charak:equiv:defn}$\folgt$\ref{item:skew_rek_charak:equiv:mitppr}:]
			Let $q:= p\wedge\ppr^\bot$ and $(\psi_i)_{i\in I}\in\Zst(\cA)$ as in \refdefn{defn:recurrent}. Then $q\in p\cA p$ and $\psi_i(q)=\psi_i(\ppr q\ppr)=0$ for all $i\in I$. Since $(\psi_i)_{i\in I}$ is faithful on $p\cA p$, it follows that $q=0$.
		\item[\ref{item:skew_rek_charak:equiv:mitppr}$\folgt$\ref{item:skew_rek_charak:equiv:defn}:]
			Suppose $p$ is not skew recurrent. Then there exists an orthogonal projection $0\neq r\leq p$ such that $\Fi(r)=0$ for all stationary normal states $\Fi\in\Zst(\cA)$, i.e.\ $r\leq(\supp\Fi)^\bot$. Hence $r\leq\ppr^\bot$.
		\item[~\;\ref{item:skew_rek_charak:pos=>skew}]
			Let $p$ be positive recurrent. Then there is a family $(\Fi_i)_{i\in I}\in\Zst(\cA)$ of stationary normal states such that $p\leq\bigvee_{i\in I}(\supp\Fi_i)=:q$. Since $(\Fi_i)_{i\in I}$ is faithful on $q\cA q$, it is faithful on $p\cA p$.\qedhere
	\end{description}
\end{prf}

The next example shows, that the converse of \ref{item:skew_rek_charak:pos=>skew} does not hold: There are skew recurrent projections which are not (positive) recurrent.

\begin{exmp}\label{exmp:pos_rekur}
	Let $\cA:=M_2(\CC)$ and let $(e_{ij})_{i,j\in\set{1,2}}$ be the canonical system of matrix units. Then $\Fi:=\tr(e_{22} \; \cdot\;)$ is a normal state and $T: M_2(\CC)\to M_2(\CC):\,x\mapsto e_{21}^* x e_{21} + e_{22}^* x e_{22} = \Fi(x)\Eins$ is completely positive with $T(\Eins)=\Eins$.
	
	If we set $p:=e_{11}$ then $T(p)=0$ and $\sum_{n=0}^\infty{T^n(p)}=p$. Thus $p$ is transient.
	Furthermore, $\Fi$ is stationary. Hence $\supp\Fi=e_{22}$ is positive recurrent and we have $\ptr=e_{11}$ and $\ppr=\ptr^\bot=e_{22}$.
	
	Now consider the rank one projection $q:=\frac{1}{2}\left(\begin{smallmatrix}1&1\\1&1\end{smallmatrix}\right)$ and let $x=\sum_{i,j}x_{ij}\,e_{ij}\in M_2(\CC)$. Then $qxq=\frac{1}{4}(x_{11}+x_{12}+x_{21}+x_{22})\left(\begin{smallmatrix}1&1\\1&1\end{smallmatrix}\right)$ and if $\Fi(qxq) = \frac{1}{4}(x_{11}+x_{12}+x_{21}+x_{22}) = 0$, it follows that $qxq=0$, too. Therefore, $q$ is skew recurrent. But, clearly, $q\nleq \ptr^\bot = e_{22}$ and thus $q$ is not recurrent.
\end{exmp}

The compact operators $\KH$ on $\hrH$ are a two sided ideal. Therefore, any Markov operator $T$ with a finite Kraus decomposition $T(x)=\sum_{i=1}^m{a_i^* x a_i}$ leaves this ideal invariant.
For such operators we obtain another characterization of skew recurrence, which is equivalent to a well-known classical characterization of positive recurrence. In order to prove this, we need the following result, which also provides a criterion for the existence of a stationary normal state. It was proven by Haag in his diploma theses \cite{flo}.  Since it is not publicly available, we also include its proof.

\begin{thm}\label{thm:FloRecurrent}\emph{\cite[Thm.\,2.1.6]{flo}}\ \
	Let $T:\BH\to\BH$ be a Markov operator such that $T(\KH)\subseteq\KH$ and let $\xi\in\hrH$. Then the following statements are equivalent:
	\begin{enumerate}[label=(\roman*)]
 		\item\label{item:FloRec:Cesaro} $\limsup\limits_{N\to\infty} \displaystyle\frac{1}{N} \displaystyle\sum\limits_{n=0}^{N-1}{\skal{T^n(t_\xi)\xi}{\xi}}\;\gneqq\;0$.
		\item\label{item:FloRec:skew} There exists a stationary normal state $\Fi$ such that $\Fi(t_\xi)\neq 0$.
	\end{enumerate}
\end{thm}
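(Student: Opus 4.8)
The plan is to handle the two implications by separate weak$^{*}$-compactness arguments, in both of which $T(\KH)\subseteq\KH$ is used to keep the relevant mass from escaping into the singular (non-normal) part. Throughout I assume $\norm\xi=1$ (rescale otherwise; $\xi=0$ is trivial), so that $t_\xi$ is the rank one projection onto $\CC\xi$ and $\skal{T^n(t_\xi)\xi}{\xi}=\omega_\xi(T^n(t_\xi))$ with the vector state $\omega_\xi(x)=\skal{x\xi}{\xi}$.

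For \ref{item:FloRec:Cesaro}$\folgt$\ref{item:FloRec:skew} I would introduce the Ces\`aro means $\Psi_N:=\tfrac1N\sum_{n=0}^{N-1}\omega_\xi\circ T^n$, which are normal states on $\BH$ with $\Psi_N(t_\xi)=\tfrac1N\sum_{n=0}^{N-1}\skal{T^n(t_\xi)\xi}{\xi}$. Choosing a subsequence along which $\Psi_N(t_\xi)\to c:=\limsup_N\Psi_N(t_\xi)>0$, and then a weak$^{*}$ cluster point $\Psi$ of it in the (weak$^{*}$ compact) state space of $\BH$, one gets $\Psi\circ T=\Psi$ (from $\norm{\Psi_N\circ T-\Psi_N}\le 2/N$) and $\Psi(t_\xi)=c$. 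Now split $\Psi=\Psi_{\mathrm n}+\Psi_{\mathrm s}$ into its normal and singular parts. The map $\phi\mapsto\phi\circ T$ on $\BH^{*}$ preserves normal functionals (since $T$ is normal) and, because $T(\KH)\subseteq\KH$, also preserves singular functionals, i.e.\ those annihilating $\KH$; by uniqueness of the splitting both $\Psi_{\mathrm n}$ and $\Psi_{\mathrm s}$ are $T$-invariant, and each is positive. Since $t_\xi\in\KH$ we have $\Psi_{\mathrm s}(t_\xi)=0$, hence $\Psi_{\mathrm n}(t_\xi)=c>0$; thus $\Psi_{\mathrm n}\ne 0$ and $\Fi:=\Psi_{\mathrm n}/\Psi_{\mathrm n}(\Eins)$ is a stationary normal state with $\Fi(t_\xi)>0$.

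For \ref{item:FloRec:skew}$\folgt$\ref{item:FloRec:Cesaro}, start from a stationary normal state $\Fi$ with $\Fi(t_\xi)=c'>0$ and put $q:=\supp\Fi$; as noted in \refsec{sec:prelim} this projection is subharmonic, so by \reflem{lem:charakSubharm} the compression $T_q:q\BH q\to q\BH q$, $x\mapsto qT(x)q$, is a Markov operator satisfying $qT^{n}(x)q=T_q^{\,n}(qxq)$ for every $x\in\BH$, and $\Phi:=\Fi|_{q\BH q}$ is a \emph{faithful} normal stationary state for $T_q$. The mean ergodic theorem for Markov operators admitting a faithful normal invariant state then gives that $\tfrac1N\sum_{n=0}^{N-1}T_q^{\,n}(y)$ converges $\sigma$-weakly to $E(y)$, with $E$ the faithful normal $\Phi$-preserving conditional expectation onto the von Neumann subalgebra $\fix T_q$. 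With $\xi':=q\xi$ (nonzero, since $0<c'=\skal{\rho\xi}{\xi}=\skal{\rho\xi'}{\xi'}$ for the density $\rho$ of $\Fi$) and $qt_\xi q=t_{\xi'}$, compressing by $q$ yields $qT^{n}(t_\xi)q=T_q^{\,n}(t_{\xi'})$, hence $\tfrac1N\sum_{n=0}^{N-1}\skal{T^{n}(t_\xi)\xi'}{\xi'}\to\skal{E(t_{\xi'})\xi'}{\xi'}$. Because $E$ is a faithful conditional expectation, the support of $E(t_{\xi'})$ is the least projection in $\fix T_q$ dominating the rank one projection onto $\CC\xi'$, hence dominates it, so $\skal{E(t_{\xi'})\xi'}{\xi'}>0$.

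What remains is to transfer this lower bound for the $q$-compressed averages back to the genuine quantity $\tfrac1N\sum_{n=0}^{N-1}\skal{T^{n}(t_\xi)\xi}{\xi}$, and this is the step I expect to be the main obstacle. Writing $\xi=\xi'+\xi''$ with $\xi''=q^{\bot}\xi$ and $A_N:=\tfrac1N\sum_{n=0}^{N-1}T^{n}(t_\xi)\ge 0$, one has $\skal{A_N\xi}{\xi}^{1/2}=\norm{A_N^{1/2}\xi}\ge\norm{A_N^{1/2}\xi'}-\norm{A_N^{1/2}\xi''}$, so the task reduces to controlling $\skal{A_N\xi''}{\xi''}$, i.e.\ to ruling out that the $q^{\bot}$-part of $\xi$ asymptotically cancels the recurrent contribution coming from $\xi'$. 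Here I would exploit that $q^{\bot}$ is superharmonic, that $q^{\bot}\BH q^{\bot}$ is $T$-invariant with $T$ restricting there to a submarkovian map, and that $\Fi$ puts no mass on $q^{\bot}$; with that estimate in hand the limsup in \ref{item:FloRec:Cesaro} is seen to be at least $\skal{E(t_{\xi'})\xi'}{\xi'}>0$. Everything else is soft: weak$^{*}$ compactness, the mean ergodic theorem, and the observation that $T(\KH)\subseteq\KH$ makes the normal/singular decomposition $T$-invariant.
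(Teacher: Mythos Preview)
Your argument for \ref{item:FloRec:Cesaro}$\Rightarrow$\ref{item:FloRec:skew} is correct and is the dual formulation of the paper's proof: where they pass to a $\sigma(\TH,\KH)$-cluster point of $\frac1N\sum_{n=0}^{N-1}T_*^n(t_\xi)$ in the trace-class unit ball, you pass to a weak$^*$ cluster state on $\BH$ and then strip off the singular part. The key observation is the same in both versions, namely that $T(\KH)\subseteq\KH$ forces the limit object to be normal (respectively, forces the singular part to be separately $T$-invariant and hence irrelevant since $t_\xi\in\KH$).

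For \ref{item:FloRec:skew}$\Rightarrow$\ref{item:FloRec:Cesaro} there is a genuine gap, and it lies exactly where you flag it. Your mean-ergodic argument on $q\BH q$ correctly yields
\[
\lim_{N\to\infty}\frac1N\sum_{n=0}^{N-1}\skal{T^n(t_\xi)\xi'}{\xi'}=\skal{E(t_{\xi'})\xi'}{\xi'}>0,
\]
but the transfer to $\skal{A_N\xi}{\xi}$ is not completed, and the ingredients you list (superharmonicity of $q^\bot$, $T$-invariance of $q^\bot\BH q^\bot$, $\Fi(q^\bot)=0$) do not by themselves control $\skal{A_N\xi''}{\xi''}$ or the cross terms. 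In simple examples $\skal{A_N\xi''}{\xi''}$ need not tend to zero at all, so your triangle-inequality estimate $\norm{A_N^{1/2}\xi}\ge\norm{A_N^{1/2}\xi'}-\norm{A_N^{1/2}\xi''}$ can collapse; you would need an argument tying the \emph{cross terms} to the diagonal $\xi'$-term, not a bound on $\skal{A_N\xi''}{\xi''}$ alone.

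The paper proceeds quite differently for this implication (and, notably, without using $T(\KH)\subseteq\KH$): assuming $\frac1N\sum_{n}\skal{T^n(t_\xi)\xi}{\xi}\to 0$, it writes $\tr(t_\xi\rho)=\tr\bigl(t_\xi\,S_N(\rho)\bigr)$ for the density $\rho$ of $\Fi$, splits $\rho$ along $t_\xi,t_\xi^\bot$, and uses Cauchy--Schwarz to show the cross terms and the $t_\xi$-corner vanish, leaving $\tr(t_\xi\rho)=\lim_N\tr\bigl(t_\xi\,S_N(t_\xi^\bot\rho t_\xi^\bot)\bigr)$. Iterating this produces a sequence $\rho_{i+1}=t_\xi^\bot T_*^{n_i}(\rho_i)t_\xi^\bot$ whose trace norm drops by at least $\tfrac12\tr(t_\xi\rho)>0$ at each step, a contradiction. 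This iterative trace-norm descent is the missing idea; your mean-ergodic route on the support of $\Fi$ does not seem to reach it.
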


\begin{prf}
	Let $\norm{\xi}=1$ and define $\Fi_\xi:=\skal{\,\cdot\,\xi}{\xi}=\tr(t_\xi\,\cdot\,)\in\Zst$. Then for $a\in\BH$ we have $\Fi_\xi(T^n(a))=\skal{T^n(a)\xi}{\xi} = \tr\left(t_\xi\,T^n(a)\right) = \tr\left(T_*^n(t_\xi)\,a\right)$, where $T_*:\TH\to\TH$ is the pre-adjoint of $T$ regarded as a map on the trace class operators.
	Let $S_N:=\frac{1}{N}\sum_{n=0}^{N-1}{T_*^n}$ and $\sigma_N:=S_N(t_\xi)$ for $N\in\NN$. Then $\sigma_N\in\TH$ and $\tr(\sigma_N\,t_\xi)=\frac{1}{N}\sum_{n=0}^{N-1}{\tr\left(T_*^n(t_\xi)\,t_\xi\right)}=\frac{1}{N}\sum_{n=0}^{N-1}{\skal{T^n(t_\xi)\xi}{\xi}}$.
	
	\begin{description}[font=\normalfont\mdseries\em]
		\item[\ref*{item:FloRec:Cesaro} $\folgt$ \ref*{item:FloRec:skew}:]
			We can assume that $(\tr(\sigma_N\,t_\xi))_{N\in\NN}\subseteq\RR$ is convergent (otherwise we could use a convergent subsequence). Hence by assumption we have $\alpha:=\lim_{N\to\infty}\tr(\sigma_N\,t_\xi)>0$.
			
			Since $t_\xi\in\TH_1:=\Set{t\in\TH}{\trnorm{t}\leq 1}$ and $T_*$ is a contraction, we have $\sigma_N=S_N(t_\xi)\in\TH_1$ for all $N\in\NN$. The set $\TH_1$ is $\sigma(\TH,\KH)$-compact.   
			Hence there is a subnet $(\sigma_{N_\lambda})_{\lambda\in\Lambda}$ of $(\sigma_N)_{N\in\NN}\subseteq\TH_1$ and an element $\tilde{\ro}\in\TH_1$ such that $\tr(\tilde{\ro}\,c)=\lim_{\lambda}\tr(\sigma_{N_\lambda}c)$ for all compact operators $c\in\KH$.
			
			For every $\lambda\in\Lambda$ the operator $\sigma_{N_\lambda}$ is positive and by our assumption $\tr(\tilde{\ro}\,t_\xi)=\lim_\lambda\tr(\sigma_{N_\lambda}\,t_\xi)=\alpha>0$. Therefore, we have $\tilde{\ro}\gneqq0$ and hence $\Fi:=\tr(\ro\,\cdot\,)$ with $\ro:=\frac{\tilde{\ro}}{\trnorm{\tilde{\ro}}}$ is a normal state and $\Fi(t_\xi)=\frac{\alpha}{\trnorm{\tilde{\ro}}}>0$. Furthermore, for all $c\in\KH$ we conclude from the compactness of $T(c)$ that
			\begin{align*}
				\tr(T_*(\tilde{\ro})\,c) & = \tr(\tilde{\ro}\;T(c)) =\lim\nolimits_{\lambda}\tr(\sigma_{N_\lambda}\,T(c)) =\lim\nolimits_{\lambda}\tr(T_*(\sigma_{N_\lambda})\;c) \\
				& =\lim\nolimits_{\lambda}\tr\left(\bigl[\sigma_{N_\lambda}+\tfrac{1}{N_\lambda}\bigl(T_*^{N_\lambda}(t_\xi)-t_\xi\bigr)\bigr]\;c \right) = \tr(\tilde{\ro}\,c). \tag*{$(\ast)$} 
			\end{align*}
			This implies $T_*(\ro)=\ro$, since $\KH$ is separating for $\TH$. Hence $\Fi$ is a stationary normal state with $\Fi(t_\xi)\neq0$.
		\item[\ref*{item:FloRec:skew} $\folgt$ \ref*{item:FloRec:Cesaro}:]
			Assume that $\lim_{N\to\infty}\frac{1}{N}\sum_{n=0}^{N-1}{\skal{T^n(t_\xi)\xi}{\xi}}=0$. 
			Let $\Fi=\tr(\ro\,\cdot\,)$ be a stationary normal state with $\Fi(t_\xi)=\tr(\ro\,t_\xi)\neq 0$. Since $T_*(\ro)=\ro$ and $t_\xi$ is an orthogonal projection,
			\begin{align*}
				\tr(t_\xi\,\ro) = \tr(t_\xi\,S_N(\ro))
				=\, & \tr\bigl(t_\xi\,S_N(t_\xi\ro t_\xi)\bigr) + \tr\bigl(t_\xi\,S_N(t_\xi^\bot\ro t_\xi)\bigr)\\
				& + \tr\bigl(t_\xi\,S_N(t_\xi\ro t_\xi^\bot)\bigr) + \tr\bigl(t_\xi\,S_N(t_\xi^\bot\ro t_\xi^\bot)\bigr).
			\end{align*}
			Using the Cauchy-Schwarz inequality we obtain that 
			\[ 
				\bigl|\tr\bigl(t_\xi\,S_N(t_\xi^\bot\ro t_\xi)\bigr)\bigr|^2=\bigl|\Fi\bigl(t_\xi S_N^*(t_\xi)t_\xi^\bot\bigr)\bigr|^2=\bigl|\Fi\bigl(((S_N^*(t_\xi))^\frac12 t_\xi)^* ((S_N^*(t_\xi))^\frac12 t_\xi^\bot)\bigr)\bigr|^2
			\]
			and similarly $\bigl|\tr\bigl(t_\xi\,S_N(t_\xi\ro t_\xi^\bot)\bigr)\bigr|^2$ are both majorized by
			\[
				\bigl|\Fi\bigl(t_\xi\,S_N^*(t_\xi)\,t_\xi\bigr)\cdot \Fi\bigl(t_\xi^\bot\,S_N^*(t_\xi)\,t_\xi^\bot\bigr)\bigr| = \bigl|\tr\bigl(t_\xi\,S_N(t_\xi\ro t_\xi)\bigr)\cdot \tr\bigl(t_\xi\,S_N(t_\xi^\bot\ro t_\xi^\bot)\bigr)\bigr|.
			\]
			Using that $t_\xi x t_\xi=\Fi_\xi(x)t_\xi$ for every $x\in\BH$, we see that $\tr\bigl(t_\xi\,S_N(t_\xi\ro t_\xi)\bigr)=\frac{\Fi_\xi(\ro)}{N}\sum_{n=0}^{N-1}{\skal{T^n(t_\xi)\xi}{\xi}}\TO 0$. Hence $\tr(t_\xi\,\ro) = \lim_{N\to\infty}\tr\bigl(t_\xi\,S_N(t_\xi^\bot\ro t_\xi^\bot)\bigr)$.
			
			Defining $\ro_1:= t_\xi^\bot\ro t_\xi^\bot$ there is an $n_1\in\NN$ such that $\tr\bigl(t_\xi\,T_*^{n_1}(\ro_1)\bigr)\geq \frac12 \tr(t_\xi\,\ro)$ and similarly to Equation $(\ast)$ it follows that
			\[
				\lim_{N\to\infty}\tr\bigl(t_\xi\,S_N(\ro_1)\bigr)= \lim_{N\to\infty}\tr\bigl(t_\xi\,S_N(T_*^{n_1}(\ro_1))\bigr).
			\]
			Hence $\tr(t_\xi \ro)=\lim_{N\to\infty}\tr\bigl(t_\xi\,S_N(T_*^{n_1}(\ro_1))\bigr)$. But now we can repeat the steps above for $T_*^{n_1}(\ro_1)$ to see that $\tr(t_\xi\,\ro) = \lim_{N\to\infty}\tr\bigl(t_\xi\,S_N(t_\xi^\bot T_*^{n_1}(\ro_1) t_\xi^\bot)\bigr)$. If we define $\ro_2:= t_\xi^\bot T_*^{n_1}(\ro_1) t_\xi^\bot$, we obtain an $n_2\in\NN$ such that $\tr\bigl(t_\xi\,T_*^{n_2}(\ro_2)\bigr)\geq \frac12 \tr(t_\xi\,\ro)$. Continuing in this fashion, we obtain a recursively defined sequence $(\ro_i)_{i\in\NN}$ with $\ro_{i+1}:=t_\xi^\bot T_*^{n_i}(\ro_i) t_\xi^\bot$ for all $i\in\NN$ such that $\tr\bigl(t_\xi\,T_*^{n_i}(\ro_{i})\bigr)\geq \frac12 \tr(t_\xi\,\ro)$ and $\tr(t_\xi\,\ro) = \lim_{N\to\infty}\tr\bigl(t_\xi\,S_N(\ro_i)\bigr)$.
			
			But since $T(\Eins)=\Eins$ and $\trnorm{x}= 
			\tr(x)$ for every $0\leq x\in\TH$, we have
			\begin{align*}
				\trnorm{\ro_i} &=\tr(\ro_i) = \tr(\ro_i\; T^{n_i}(\Eins)) = \tr\bigl(T_*^{n_i}(\ro_i)\bigr) =\tr\bigl(t_\xi\, T_*^{n_{i}}(\ro_i)\bigr) + \tr\bigl(t_\xi^\bot\, T_*^{n_{i}}(\ro_i)\bigr)\\
				&\geq \tfrac12 \tr(t_\xi\,\ro) + \tr\bigl(t_\xi^\bot T_*^{n_{i}}(\ro_i)t_\xi^\bot\bigr)=\tfrac12 \tr(t_\xi\,\ro) + \trnorm{\ro_{i+1}}.
			\end{align*}
			Hence $0\leq\trnorm{\ro_{i+1}}\leq \trnorm{\ro_i}-\tfrac12 \tr(t_\xi\,\ro)$ for all $i\in\NN$. But this is not possible.\qedhere
	\end{description}
\end{prf}

We note that we did not make use of the invariance of the set of compact operators in the proof of the implication \ref*{item:FloRec:skew} $\folgt$ \ref*{item:FloRec:Cesaro}.


\begin{cor}\label{cor:equivskewrecurrent}
	Let $T:\BH\to\BH$ be a Markov operator on $\BH$ that satisfies $T(\KH)\subseteq\KH$.
	\begin{enumerate}[label=(\alph*)]
		\item\label{item:equivskewrec:ppr_bot} Let $\xi\in\hrH$. Then $\lim\limits_{N\to\infty} \frac{1}{N}\sum_{n=0}^{N-1}{\skal{T^n(t_\xi)\xi}{\xi}}=0$ if and only if $t_\xi\leq\ppr^\bot$.  
		\item\label{item:equivskewrec:skew} For an orthogonal projection $p\in\BH$ the following statements are equivalent:
		\begin{enumerate}[label=(\roman*)]
			\item\label{item:equivskew:skew} $p$ is skew recurrent, i.e.\ there exists a family of stationary normal states $(\psi_i)_{i\in I}\subseteq\Zst(\cA)$ which is faithful on $p\BH p$.
			\item\label{item:equivskew:Flo} For all $\xi\in p\hrH$ we have: \quad $\limsup\limits_{N\to\infty} \displaystyle\frac{1}{N} \displaystyle\sum\limits_{n=0}^{N-1}{\skal{T^n(t_\xi)\xi}{\xi}}\;\gneqq\;0$.
		\end{enumerate}
	\end{enumerate}
\end{cor}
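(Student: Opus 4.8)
The plan is to obtain both statements from Haag's theorem \refthm{thm:FloRecurrent}, together with the identity $\ppr=\bigvee\Set{\supp\Fi}{\Fi\circ T=\Fi\in\Zst}$ noted just before \refrem{rem:pRsubharm} and with \refpropX{prop:skew_rek_charak}{.\ref*{item:skew_rek_charak:equiv}}. Two small reductions make the argument transparent. First, we take $\xi$ to be a unit vector, so that $t_\xi$ is the rank one orthogonal projection onto $\CC\xi$ (the vector $\xi=0$ being trivial, and rescaling a nonzero $\xi$ changes neither whether the averages $\frac1N\sum_{n=0}^{N-1}\skal{T^n(t_\xi)\xi}{\xi}$ have limit $0$ nor the inclusion $\CC\xi\subseteq\ppr^\bot\hrH$). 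Second, since each $T^n(t_\xi)\geq0$, those averages are nonnegative, so their $\limsup$ equals $0$ if and only if their limit exists and equals $0$; hence the inequality appearing in \refthm{thm:FloRecurrent} is precisely the negation of $\lim_N\frac1N\sum_{n=0}^{N-1}\skal{T^n(t_\xi)\xi}{\xi}=0$.

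For part \ref*{item:equivskewrec:ppr_bot} I would combine these observations: by \refthm{thm:FloRecurrent} the vanishing of $\lim_N\frac1N\sum_{n=0}^{N-1}\skal{T^n(t_\xi)\xi}{\xi}$ is equivalent to the nonexistence of a stationary normal state $\Fi$ with $\Fi(t_\xi)\neq0$, i.e.\ to $\Fi(t_\xi)=0$ for every stationary normal state $\Fi$. Writing $\Fi=\tr(\rho\,\cdot\,)$ one has $\Fi(t_\xi)=\skal{\rho\xi}{\xi}=\norm{\rho^{1/2}\xi}^2$, which vanishes exactly when $\rho\xi=0$, i.e.\ when $(\supp\Fi)\xi=0$, i.e.\ when $t_\xi\leq(\supp\Fi)^\bot$. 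Therefore ``$\Fi(t_\xi)=0$ for all stationary normal $\Fi$'' is the same as $t_\xi\leq\bigwedge_\Fi(\supp\Fi)^\bot=\bigl(\bigvee_\Fi\supp\Fi\bigr)^\bot=\ppr^\bot$. (If no stationary normal state exists, then $\ppr=0$ and both sides hold, consistently with \refthm{thm:FloRecurrent}.)

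For part \ref*{item:equivskewrec:skew} it suffices, by \refpropX{prop:skew_rek_charak}{.\ref*{item:skew_rek_charak:equiv}}, to show that condition \ref*{item:equivskew:Flo} is equivalent to $p\wedge\ppr^\bot=0$. By part \ref*{item:equivskewrec:ppr_bot}, for a unit vector $\xi\in p\hrH$ the inequality in \ref*{item:equivskew:Flo} holds for $\xi$ if and only if $t_\xi\not\leq\ppr^\bot$; and since the rank one projections $t_\xi\leq p$ are exactly those coming from unit vectors $\xi\in p\hrH$, condition \ref*{item:equivskew:Flo} asserts precisely that no rank one projection lies below both $p$ and $\ppr^\bot$. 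If $p\wedge\ppr^\bot=0$ this is clear, since $t_\xi\leq p$ and $t_\xi\leq\ppr^\bot$ would give $0\neq t_\xi\leq p\wedge\ppr^\bot=0$; conversely, if $p\wedge\ppr^\bot\neq0$, a unit vector in its range produces a rank one projection $t_\xi\leq p$ with $t_\xi\leq\ppr^\bot$. Thus \ref*{item:equivskew:Flo} $\Leftrightarrow$ $p\wedge\ppr^\bot=0$, and \refpropX{prop:skew_rek_charak}{.\ref*{item:skew_rek_charak:equiv}} turns the right-hand side into skew recurrence of $p$. (As elsewhere the quantifier ``for all $\xi\in p\hrH$'' in \ref*{item:equivskew:Flo} is understood for $\xi\neq0$.)

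I do not foresee a genuine obstacle here; the real content sits in \refthm{thm:FloRecurrent}, and what remains is bookkeeping. The three points that need a little care are: the reduction of the $\limsup$-condition to an honest limit via positivity of the $T^n(t_\xi)$; the elementary equivalence $\Fi(t_\xi)=0\Leftrightarrow t_\xi\leq(\supp\Fi)^\bot$ followed by the De Morgan step replacing the family of supports by $\ppr$; and the degenerate cases $\xi=0$, $p=0$, and the absence of stationary normal states.
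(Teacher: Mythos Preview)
Your proposal is correct and follows essentially the same route as the paper: both derive everything from \refthm{thm:FloRecurrent} together with \refpropX{prop:skew_rek_charak}{.\ref*{item:skew_rek_charak:equiv}} and the identification of $\ppr$ as the supremum of supports of stationary normal states. The only organizational difference is that you prove part~\ref*{item:equivskewrec:ppr_bot} first and then deduce part~\ref*{item:equivskewrec:skew} from it, whereas the paper argues the two directions of~\ref*{item:equivskewrec:skew} directly (using the faithful family $(\psi_i)$ for \ref*{item:equivskew:skew}$\Rightarrow$\ref*{item:equivskew:Flo} and picking $\xi\in(p\wedge\ppr^\bot)\hrH$ for the contrapositive), leaving~\ref*{item:equivskewrec:ppr_bot} implicit.
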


\begin{prf}
	\begin{description}[font=\normalfont\mdseries\em]
		\item[\ref*{item:equivskew:skew} $\folgt$ \ref*{item:equivskew:Flo}:]
			Let $\xi\in p\hrH$, then $0\leq t_\xi\in p\BH p$. Hence there is an $i\in I$ such that $\psi_i(t_\xi)>0$ and \refthm{thm:FloRecurrent} implies $\limsup_{N\to\infty} \frac{1}{N} \sum_{n=0}^{N-1}{\skal{T^n(t_\xi)\xi}{\xi}}>0$.
		\item[\ref*{item:equivskew:Flo} $\folgt$ \ref*{item:equivskew:skew}:]
			Suppose $p$ is not skew recurrent. Then $r:=p\wedge\ppr^\bot\neq0$ by \refprop{prop:skew_rek_charak}. Let $\xi\in r\hrH$ then $\Fi(t_\xi)\leq\Fi(\ppr^\bot)=0$ for every stationary normal state $\Fi\in\Zst(\cA)$. Applying \refthm{thm:FloRecurrent} again completes the proof.\qedhere
	\end{description}
\end{prf}

We have already seen that skew recurrent projections are not necessarily recurrent. This is different to the classical situation, where \refcorX{cor:equivskewrecurrent}{.\ref*{item:equivskew:Flo}} provides a criterion for positive recurrence.

As an illustration of our notions of positive recurrence and transience we discuss as an example a *-automorphism on $\BH$.

\begin{exmp}\label{exmp:automBH}
	Let $T:\BH\to\BH$ be a *-automorphism. Then there is a unitary $u\in\BH$ such that $T(x)=u^*xu$ for all $x\in\BH$ and we denote by $\sigma(u)\subseteq\TT:=\Set{\zeta\in\CC}{\abs{\zeta}=1}\subseteq\CC$ the spectrum of $u$.
	
	For simplicity we assume that $u$ admits a cyclic vector $\xi\in\hrH$. It induces a spectral measure $\mu$ on $\TT$, which is in fact supported by $\sigma(u)$.
	By the Spectral Theorem we may identify $\hrH$ with $\Lzwei(\TT,\mu)$, $u$ with the multiplication operator $M_{\id} 
	$ which is given by $(M_{\id} f)(z):=z\cdot f(z)$ for $f\in\Lzwei(\TT,\mu)$, $z\in\TT$, and $\xi$ with the constant function $\TT\ni z\mapsto1$ in $\Lzwei(\TT,\mu)$.
	The von Neumann subalgebra of $\BH$ generated by $u$, as well as its commutant, may both be identified with $\Linfty(\TT,\mu)$, where a function $g\in\Linfty(\TT,\mu)$ is identified with the corresponding multiplication operator on $\Lzwei(\TT,\mu)$ (cf.\ also \refexmp{exmp:transCond}).
	
	The spectral measure $\mu$ can be uniquely decomposed into $\mu=\mu_\mathrm{pp}+\mu_\mathrm{ac}+\mu_\mathrm{sc}$, where $\mu_\mathrm{pp}$, $\mu_\mathrm{ac}$, and $\mu_\mathrm{sc}$ denote its pure point, absolutely continuous, and singular continuous part, respectively. Correspondingly, there are three mutually orthogonal projections $p_\mathrm{pp}$, $p_\mathrm{ac}$, and $p_\mathrm{sc}$ in $\Linfty(\TT,\mu)\subseteq\BH$ such that $p_\mathrm{pp}+p_\mathrm{ac}+p_\mathrm{sc}=\Eins$ and $p_\mathrm{pp}u$, $p_\mathrm{ac}u$, and $p_\mathrm{sc}u$ have pure point, absolutely continuous, and singular continuous spectrum, respectively.
	
	Clearly, $T$ admits a stationary normal state $\Fi$ if and only if its corresponding density operator $\rho\in\TH$ commutes with $u$, hence is contained in $\Linfty(\TT,\mu)$. Since $\rho$ is a compact operator, this implies $\rho\in p_\mathrm{pp}\;\Linfty(\TT,\mu)$. It easily follows that the maximal positive recurrent projection $\ppr$ of $T$ is given by $p_\mathrm{pp}$.
	
	On the other hand, we show that whenever $p_\mathrm{ac}=\Eins$ then for the maximal transient projection $\ptr$ we have $\ptr=\Eins$ (hence, in general, we have $p_\mathrm{ac}\leq\ptr$):
	Let $\mu$ be absolutely continuous with respect to the Lebesgue measure $\lambda$ on $\TT$. Then $\mu$ has a density $f\in\Leins(\TT,\lambda)$ with respect to $\lambda$.
	For any $l\in\NN$ let $A_l:=\Set{z\in\TT}{\abs{f(z)}\leq l}$ and $\ChFkt_l:=\ChFkt_{A_l}$.
	If $g\in\Lzwei(\TT,\mu)$ then
	\[
		\int_\TT \abs{\ChFkt_lfg}^2\,\mathrm{d}\lambda = \int_\TT \ChFkt_lf\abs{g}^2f\,\mathrm{d}\lambda\leq l\cdot\int_\TT \abs{g}^2\,\mathrm{d}\mu<\infty,
	\]
	hence $\ChFkt_lfg\in\Lzwei(\TT,\lambda)$.
	For $k\in\ZZ$ we define the function $e_k$ on $\TT$ by $e_k(z):=z^k$ for $z\in\TT$ and we let $e_{k,l}:=\ChFkt_le_k$ ($k\in\ZZ$, $l\in\NN$).
	Then $(M_{\id}^*)^n\,e_k=e_{k-n}$ and for every $g\in\Lzwei(\TT,\mu)$ we have
	\begin{align*}
		\skal{T^n(t_{e_{k,l}})g}{g}_{\Lzwei(\mu)} &= \skal{t_{e_{k,l}} M_{\id}^n\,g}{M_{\id}^n\,g}_{\Lzwei(\mu)} =\abs{\skal{M_{\id}^n\,g}{e_{k,l}}_{\Lzwei(\mu)}}^2\\
		&= \abs{\skal{\ChFkt_l\,g}{e_{k-n}}_{\Lzwei(\mu)}}^2 = \abs{\int_\TT\ChFkt_l\,g\;\overline{e_{k-n}}\,\mathrm{d}\mu}^2 \\
		&=\abs{\int_\TT\ChFkt_l\,g\;\overline{e_{k-n}}\;f\,\mathrm{d}\lambda}^2 = \abs{\skal{\ChFkt_lfg}{e_{k-n}}_{\Lzwei(\lambda)}}^2.
	\end{align*}
	Since $\ChFkt_lfg\in\Lzwei(\TT,\lambda)$ and $\skal{\ChFkt_lfg}{e_m}$ is the $m$th Fourier coefficient of $\ChFkt_lfg$ we obtain for all $k\in\ZZ$, $l\in\NN$ and all $g\in\Lzwei(\TT,\mu)$:
	\[
		\sum_{n=0}^\infty \skal{T^n(t_{e_{k,l}})g}{g}_{\Lzwei(\mu)} = \sum_{n=0}^\infty \abs{\skal{\ChFkt_lfg}{e_{k-n}}_{\Lzwei(\lambda)}}^2 \leq\norm{\ChFkt_l f g}_{\Lzwei(\lambda)}^2 <\infty\;.
	\]
	Hence each $t_{e_{k,l}}$ is $T$-summable by \reflem{lem:SummeSkalarProdEndlForallEta}.
	Finally, since $\lim_{l\to\infty} e_{k,l}=e_k$ for all $k\in\ZZ$, the set $\Set{e_{k,l}}{k\in\ZZ,\;l\in\NN}$ is total in $\hrH$ and \refprop{prop:transBH} implies $\ptr=\Eins$.

\end{exmp}

\section{The Finite Dimensional Case}\label{sec:finite}

In the finite dimensional case the Markov-Kakutani Theorem ensures the existence of a stationary state for any Markov operator. Therefore, we have $\ppr\neq0$, hence $\ptr\neq\Eins$. \reflem{lem:pTr_summierbar_endl} below shows that $\ppr^\bot$ is $T$-summable. This means that $\ptr=\ppr^\bot$ (cf.\ the remark after \refthm{thm:trans}) and thus $\sum_{n=0}^\infty{T^n(\ptr)}\in\cA_+$. This also implies $\pnr=\Eins-(\ptr+\ppr)=0$, i.e.\ there is no null recurrent part.

A version of the next result is also contained in \cite{uma} as Lemma 7. However, the proof there seems to lack the final argument.

\begin{lem}\label{lem:pTr_summierbar_endl}
	Let $\cA\subseteq\BH$ be a finite dimensional von Neumann algebra and $T:\cA\to\cA$ a Markov operator. Then  $\ppr^\bot$ is $T$-summable.
\end{lem}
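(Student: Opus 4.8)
The plan is to exhibit $\ppr^\bot$ as a finite sum of $T$-summable projections, or equivalently to find a single $T$-summable positive element whose support dominates $\ppr^\bot$, and then invoke the characterization of transience. Since $\cA$ is finite dimensional, the lattice of projections is finite, so it suffices to show that every projection $p\leq\ppr^\bot$ is transient; in fact it is enough to produce, for each nonzero $p\leq\ppr^\bot$, a nonzero subprojection that is $T$-summable, and then peel off such subprojections inductively (the process terminates by finite dimensionality). So the heart of the matter is: if $0\neq p\leq\ppr^\bot$, then $p$ is not orthogonal to some $T$-summable projection.

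The key input is the Cesàro-limit criterion. In finite dimensions $T$ automatically satisfies $T(\KH)\subseteq\KH$ (all operators are compact), so \refthm{thm:FloRecurrent} and \refcor{cor:equivskewrecurrent} apply. If $0\neq p\leq\ppr^\bot$, pick any unit vector $\xi\in p\hrH$; then $t_\xi\leq\ppr^\bot$, so by \refcorX{cor:equivskewrecurrent}{.\ref*{item:equivskewrec:ppr_bot}} we have $\lim_{N\to\infty}\frac1N\sum_{n=0}^{N-1}\skal{T^n(t_\xi)\xi}{\xi}=0$. I want to upgrade this Cesàro statement to genuine summability of $\sum_{n=0}^\infty T^n(t_\xi)$. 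Here finite dimensionality does real work: the sequence $a_n:=T^n(t_\xi)\in\cA_+$ lives in the compact set $\{a\in\cA_+:\norm a\leq1\}$, and its Cesàro averages tend to $0$; I claim this forces $T^n(t_\xi)\to0$, hence (again by finite dimensionality, once one knows the tail sums are bounded, which follows from the Riesz decomposition applied to the superharmonic element $\sum_{k=0}^\infty\lambda^k T^k(t_\xi)$ for suitable $\lambda<1$, or more directly from a spectral-radius argument on the operator $T$ restricted to the transient block) that $\sum_n T^n(t_\xi)$ converges. The cleanest route: let $h:=\stlim_n T^n(\cdot)$ type limits do not immediately exist, so instead argue that $\ppr^\bot$ contains no nonzero positive fixed point (since a positive fixed point has a support projection that is subharmonic and supports a stationary state by the Markov–Kakutani theorem applied to $T$ restricted to the corner, hence lies under $\ppr$), conclude via \refpropX{prop:trans_and_superharm_in_sonderfaellen}{.\ref*{item:trans_superharm:ptr=0}}-style reasoning that the superharmonic part of $\cA_{\ppr^\bot}$ has trivial fixed space, and then the Riesz decomposition \refthm{thm:RieszDecompThm} forces every superharmonic element of that corner to be a potential.

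Concretely, I would restrict $T$ to the corner $\cB:=q\cA q$ with $q:=\ppr^\bot$. Since $\ppr$ is subharmonic (\refrem{rem:pRsubharm}), $q$ is superharmonic, so $T_q:=q\,T(\cdot)\,q$ maps $\cB$ to $\cB$ and is submarkovian, with $T_q^n(x)=q\,T^n(x)\,q$ for $x\in\cB$ by \reflem{lem:charakSubharm}\,(b) (applied to the subharmonic projection $q^\bot=\ppr$... more precisely one uses that $q$ is superharmonic directly). The claim reduces to: $T_q$ is \emph{stable}, i.e. $\stlim_n T_q^n(x)=0$ for all $x\in\cB_+$, equivalently $\Eins_\cB=q$ is a potential for $T_q$, equivalently $q$ is $T_q$-summable. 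By \refthmX{thm:RieszDecompThm}{.\ref*{item:RieszDecompThm:zerlegung}} it is enough to show $\fix T_q\cap\cB_+=\{0\}$: any $0\leq h\in\fix T_q$ has support projection $r\leq q$ which is subharmonic for $T_q$; applying Markov–Kakutani to $T_q$ restricted to $r\cB r$ yields a $T_q$-stationary, hence essentially $T$-stationary (after correcting with the lost mass, which vanishes since $T_q(r)=r$), normal state supported under $r$, forcing $r\leq\ppr$ and therefore $r\leq q\wedge\ppr=0$. Thus $h=0$, so by Riesz $q$ is a potential for $T_q$, hence $T_q$-summable, and then $\sum_n T^n(q)=\sum_n T_q^n(q)$ converges in $\cA_+$ — wait, the lift needs care: $q\,T^n(q)\,q$ summable plus $q$ superharmonic gives $T^n(q)=q T^n(q) q + (\text{terms that must also be controlled})$; since $T(q)\leq q$ we actually get $T^n(q)\leq q$ and $T^n(q)=T^n(q)$ is already dominated, and the identity $\sum_N (q-T^{N+1}(q))$ telescopes to show $\sum_n (T^n(q)-T^{n+1}(q))$ is the charge — cleanest is to note $T^n(q)$ is a decreasing sequence of positive contractions (since $T(q)\le q$ implies $T^{n+1}(q)\le T^n(q)$), so $\stlim_n T^n(q)=:h_0$ exists, is a positive fixed point $\le q$, hence $h_0=0$ by the above, hence $q$ is a potential by \refthmX{thm:RieszDecompThm}{.\ref*{item:RieszDecompThm:potential}}. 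So $\ppr^\bot=q$ is itself $T$-summable.

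The main obstacle is the passage from the Cesàro vanishing supplied by \refcor{cor:equivskewrecurrent} (or more structurally from the definition of $\ppr$ together with Markov–Kakutani) to the ordinary convergence $\stlim_n T^n(\ppr^\bot)=0$; the slick way around it, which I would adopt, is to bypass Cesàro limits entirely and argue via the Riesz decomposition that the only positive fixed point under $\ppr^\bot$ is $0$ — the monotone decreasing sequence $T^n(\ppr^\bot)$ then converges to such a fixed point, which is therefore $0$. The one genuinely delicate point to get right is that a $T_q$-stationary normal state on the corner, where $T_q$ is merely submarkovian, lifts to a $T$-stationary normal state on $\cA$ supported under $\ppr^\bot$: this works precisely because the relevant support projection $r$ satisfies $T(r)=r$ (being both $\le q$ superharmonic and, as the support of a $T_q$-fixed point, subharmonic — in finite dimensions a subharmonic projection under a superharmonic one with no mass leak is fixed), so no probability escapes and the extension by zero is genuinely $T$-stationary.
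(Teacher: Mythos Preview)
Your proposal has the right skeleton --- show that the decreasing sequence $T^n(\ppr^\bot)$ tends to zero and then upgrade to summability --- but the final step contains a genuine gap. You conclude that $\ppr^\bot$ is a \emph{potential} (via \refthmX{thm:RieszDecompThm}{.\ref*{item:RieszDecompThm:potential}}, using $T^n(\ppr^\bot)\searrow h_0=0$) and then assert ``so $\ppr^\bot$ is itself $T$-summable''. These are different conditions: a potential $y$ satisfies $T^n(y)\to 0$, whereas $T$-summability asks for $\sum_n T^n(y)\in\cA_+$. The Riesz decomposition gives you only the former. The paper closes this gap with a short geometric-decay argument that genuinely uses finite dimensionality: once $T^n(\ppr^\bot)\to 0$ in norm, choose $n_0$ with $\norm{T^{n_0}(\ppr^\bot)}\leq\tfrac12$; since $T^{n_0}(\ppr^\bot)\leq\ppr^\bot$ by superharmonicity, this yields $T^{n_0}(\ppr^\bot)\leq\tfrac12\,\ppr^\bot$, hence $T^{kn_0}(\ppr^\bot)\leq 2^{-k}\,\ppr^\bot$, and summability follows.

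Your route to $h_0=0$ is also more fragile than it needs to be. The claim that the support $r$ of a positive $T_q$-fixed element is \emph{subharmonic} is not justified --- \refprop{prop:suppEbenfallsSuperharm} only yields \emph{super}harmonicity of $r$ --- and without it the lifting of a $T_q$-stationary state on the corner to a $T$-stationary state on $\cA$ does not go through. The paper sidesteps the whole corner-and-lifting detour: in finite dimensions, for \emph{every} state $\omega$ the Ces\`aro limit $\Fi_\omega:=\lim_{N\to\infty}\tfrac1N\sum_{n=0}^{N-1}\omega\circ T^n$ exists and is $T$-stationary, hence supported under $\ppr$; since $h_0=\lim_n T^n(\ppr^\bot)\in\fix T$ lies under $\ppr^\bot$, one gets $\omega(h_0)=\Fi_\omega(h_0)=0$ for every $\omega$, so $h_0=0$ directly.
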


\begin{prf}
	Since $\ppr$ is subharmonic, we have $T(\ppr^\bot)\leq\ppr^\bot$. Hence $\bigl(T^n(\ppr^\bot)\bigr)_{n\in\NN}$ is a bounded decreasing sequence in $\ppr^\bot\cA \ppr^\bot$ and thus convergent to an element $x_0\in\fix T$.

	Since $T_*$ is a contraction and $\cA$ is finite dimensional, $\Fi_\omega:=\lim_{N\to\infty}\frac{1}{N}\sum_{n=0}^{N-1}{\omega\circ T^n}$ exists for all $\omega\in\Zst(\cA)$ and gives stationary state.
	Since $x_0\in\ppr^\bot\cA\ppr^\bot$, we have $\Fi_\omega(x_0)=0$. Therefore, for all $\omega\in\Zst(\cA)$ we have:
	\[
		\omega(x_0) = 
		\lim_{N\to\infty}\frac{1}{N}\sum_{n=0}^{N-1}{\omega\bigl(T^n(x_0)\bigr)} = \Fi_\omega(x_0) =0.
	\]
	Hence $\bigl(T^n(\ppr^\bot)\bigr)_{n\in\NN}$ is (norm-)convergent to zero and there is an $n_0\in\NN$ such that $\norm{T^{n_0}(\ppr^\bot)} \leq \frac12\norm{\ppr^\bot}$. This implies $T^{n_0}(\ppr^\bot) \leq \frac12\,\ppr^\bot$ and thus $T^{k\cdot n_0}(\ppr^\bot) \leq 2^{-k}\,\ppr^\bot$. Therefore, we have $\sum_{n=k\cdot n_0}^{(k+1)n_0}{T^n(\ppr^\bot)}\leq n_0\,2^{-k}\,\ppr^\bot$ and $\sum_{n=0}^\infty{T^n(\ppr^\bot)}\leq 2n_0\,\ppr^\bot$.
\end{prf}

In particular, in the finite dimensional setting $\lim_{n\to\infty}T^n(\ppr)=\Eins$ always holds.

From the previous lemma we can deduce several equivalent characterizations for transient projections.

\begin{thm}\label{thm:trans_endl}
	Let $\cA\subseteq\BH$ be a finite dimensional von Neumann algebra and $T:\cA\to\cA$ a Markov operator. Then for an orthogonal projection $p\in\cA$ the following statements are equivalent:
	\begin{enumerate}[label=(\arabic*)]
		\item\label{item:trans_endl:trans}
			$p$ is transient.
		\item\label{item:trans_endl:Summierbar}
			$p$ is $T$-summable, i.e. $\sum\limits_{n=0}^\infty{T^n(p)}\in\cA_+$.
		\item\label{item:trans_endl:PotenzenGegenNull}
			$\lim\limits_{n\to\infty}T^n(p)= 0$.
		\item\label{item:trans_endl:KeinInvarianterZustand}
			For every stationary state $\Fi\in\Zst(\cA)$ we have $\Fi(p)=0$.
		\item\label{item:trans_endl:PotentialTraeger}
			There is an element $y\in\cApot$ such that $p\leq \supp y$.
	\end{enumerate}
	
	If $\cA=\BH$ and $(\xi_i)_{i\in I}$ an orthonormal basis for $p\hrH$ then these statements are equivalent to the following ones:
	\begin{enumerate}[label=(\arabic*), resume]
		\item\label{item:trans_endl:t_XiSummierbar}
			For all $i\in I$ we have \ $\sum\limits_{n=0}^\infty{T^n(t_{\xi_i})}\in\cA_+$.
		\item\label{item:trans_endl:FloXiXi}
			For all $i\in I$ we have \ $\sum\limits_{n=0}^\infty{\skal{T^n(t_{\xi_i})\xi_i}{\xi_i}}<\infty$.
		\item\label{item:trans_endl:XiXiGegenNull}
			For all $i\in I$ we have \ $\lim\limits_{n\to\infty}\skal{T^n(t_{\xi_i})\xi_i}{\xi_i}=0$.
		\item\label{item:trans_endl:FloOrginal}
			For all $i\in I$ we have \ $\lim\limits_{N\to\infty}\tfrac{1}{N}\sum\limits_{n=0}^{N-1}{\skal{T^n(t_{\xi_i})\xi_i}{\xi_i}}=0$.
	\end{enumerate}
\end{thm}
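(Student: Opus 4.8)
The plan is to prove the equivalence of \ref*{item:trans_endl:trans}--\ref*{item:trans_endl:PotentialTraeger} by a single implication cycle and then to graft the remaining conditions \ref*{item:trans_endl:t_XiSummierbar}--\ref*{item:trans_endl:FloOrginal} onto it. The decisive input is \reflem{lem:pTr_summierbar_endl}: together with the identity $\ptr=\ppr^\bot$ noted above, it shows that in finite dimensions $\ptr$ is itself a $T$-summable orthogonal projection, so that $y_0:=\sum_{n=0}^\infty T^n(\ptr)$ is a potential with $\supp y_0\geq\ptr$. Apart from that, the argument is bookkeeping around \refthm{thm:trans}, \reflem{lem:SummeSkalarProdEndlForallEta}, and, for the $\BH$-part, \refcor{cor:equivskewrecurrent}.

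First one runs the cycle \ref*{item:trans_endl:trans}$\folgt$\ref*{item:trans_endl:Summierbar}$\folgt$\ref*{item:trans_endl:PotenzenGegenNull}$\folgt$\ref*{item:trans_endl:KeinInvarianterZustand}$\folgt$\ref*{item:trans_endl:PotentialTraeger}$\folgt$\ref*{item:trans_endl:trans}. For \ref*{item:trans_endl:trans}$\folgt$\ref*{item:trans_endl:Summierbar} one uses $p\leq\ptr=\ppr^\bot$ and dominates the increasing partial sums $\sum_{n=0}^N T^n(p)$ by $\sum_{n=0}^\infty T^n(\ppr^\bot)\in\cA_+$. The step \ref*{item:trans_endl:Summierbar}$\folgt$\ref*{item:trans_endl:PotenzenGegenNull} is just that the terms of a convergent series tend to zero (all topologies coincide on the finite dimensional $\cA$), and \ref*{item:trans_endl:PotenzenGegenNull}$\folgt$\ref*{item:trans_endl:KeinInvarianterZustand} holds since any stationary state satisfies $\Fi(p)=\Fi(T^n(p))\to 0$. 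For \ref*{item:trans_endl:KeinInvarianterZustand}$\folgt$\ref*{item:trans_endl:PotentialTraeger} one invokes the elementary equivalence of $\Fi(p)=0$ with $p\leq(\supp\Fi)^\bot$ for a projection $p$, which gives $p\leq\bigwedge_\Fi(\supp\Fi)^\bot=\ppr^\bot=\ptr\leq\supp y_0$. Finally \ref*{item:trans_endl:PotentialTraeger}$\folgt$\ref*{item:trans_endl:trans} is immediate from \refthm{thm:trans} applied to the one-element family $\{y\}$.

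For the $\BH$-part one observes that finite dimensionality makes $\hrH$, hence the index set $I$, finite, so that $p=\sum_{i\in I}t_{\xi_i}$. The equivalence \ref*{item:trans_endl:Summierbar}$\eqvt$\ref*{item:trans_endl:t_XiSummierbar} then follows by dominating $\sum_n T^n(t_{\xi_i})$ by $\sum_n T^n(p)$ (using $t_{\xi_i}\leq p$) in one direction and summing finitely many $T$-summable operators in the other. The chain \ref*{item:trans_endl:t_XiSummierbar}$\folgt$\ref*{item:trans_endl:FloXiXi}$\folgt$\ref*{item:trans_endl:XiXiGegenNull}$\folgt$\ref*{item:trans_endl:FloOrginal} is purely formal: evaluate the operator sum at $\xi_i$, pass from a convergent series to its terms, and take Ces\`aro means. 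The only non-formal step is \ref*{item:trans_endl:FloOrginal}$\folgt$\ref*{item:trans_endl:trans}: since $T(\KH)\subseteq\KH$ holds trivially ($\KH=\BH$ in finite dimensions), \refcorX{cor:equivskewrecurrent}{.\ref*{item:equivskewrec:ppr_bot}} gives $t_{\xi_i}\leq\ppr^\bot$ for every $i\in I$, whence $p=\bigvee_{i\in I}t_{\xi_i}\leq\ppr^\bot=\ptr$, i.e.\ $p$ is transient.

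No genuine obstacle is expected: once \reflem{lem:pTr_summierbar_endl} (for the first block) and \refthm{thm:FloRecurrent}, via \refcor{cor:equivskewrecurrent}, (for the last block) are available, the theorem is essentially a repackaging of already established facts. The points requiring some care are keeping track of which implications genuinely use finite dimensionality -- the collapse $\ptr=\ppr^\bot$ and the finiteness of $I$ -- as opposed to those valid for arbitrary $\cA$; the standing hypothesis $\cA=\BH$ for conditions \ref*{item:trans_endl:t_XiSummierbar}--\ref*{item:trans_endl:FloOrginal}; and the one-line verification that $\Fi(p)=0$ is equivalent to $p\leq(\supp\Fi)^\bot$ for a projection $p$, which rests on faithfulness of $\Fi$ on $(\supp\Fi)\,\cA\,(\supp\Fi)$.
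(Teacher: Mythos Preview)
Your proof is correct and follows essentially the same route as the paper: both rely on \reflem{lem:pTr_summierbar_endl} and the identity $\ptr=\ppr^\bot$ for the first block, and on \refthm{thm:FloRecurrent} (directly or via \refcor{cor:equivskewrecurrent}) to close the $\BH$-cycle. The only cosmetic differences are that the paper exhibits $y:=\sum_n T^n(p)$ rather than your $y_0:=\sum_n T^n(\ptr)$ in the step \ref*{item:trans_endl:KeinInvarianterZustand}$\folgt$\ref*{item:trans_endl:PotentialTraeger}, and that it returns from \ref*{item:trans_endl:FloOrginal} to \ref*{item:trans_endl:KeinInvarianterZustand} instead of to \ref*{item:trans_endl:trans}.
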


\begin{prf}
	\begin{description}[font=\normalfont\mdseries\em]
		\item[\ref*{item:trans_endl:trans}$\eqvt$\ref*{item:trans_endl:PotentialTraeger}:]
			Cf.\ \refrem{rem:trans}.
		\item[\ref*{item:trans_endl:Summierbar}$\folgt$\ref*{item:trans_endl:PotenzenGegenNull}:] Trivial.
		\item[\ref*{item:trans_endl:PotenzenGegenNull}$\folgt$\ref*{item:trans_endl:KeinInvarianterZustand}:]
			Let $\Fi\in\Zst(\cA)$ be a stationary state. Then $\Fi(p)=\Fi(T^n(p))\Geht{n\to\infty}0$.
		\item[\ref*{item:trans_endl:KeinInvarianterZustand}$\folgt$\ref*{item:trans_endl:PotentialTraeger}:]
			By assumption we have $p\leq(\supp\Fi)^\bot$ for all stationary states $\Fi\in\Zst(\cA)$. Hence $p\leq \ppr^\bot$ and applying \reflem{lem:pTr_summierbar_endl} we have $y:=\sum_{n=0}^\infty{T^n(p)}\in\cApot$. But, clearly, $p\leq\supp y$.
		\item[\ref*{item:trans_endl:PotentialTraeger}$\folgt$\ref*{item:trans_endl:Summierbar}:]
			Since $p\leq \ptr$, \reflem{lem:pTr_summierbar_endl} yields  $\sum_{n=0}^\infty{T^n(p)}\in\cA_+$.
		\item[\ref*{item:trans_endl:Summierbar}$\folgt$\ref*{item:trans_endl:t_XiSummierbar}$\folgt$\ref*{item:trans_endl:FloXiXi}$\folgt$\ref*{item:trans_endl:XiXiGegenNull}$\folgt$\ref*{item:trans_endl:FloOrginal}:] Trivial.
		\item[\ref*{item:trans_endl:FloOrginal}$\folgt$\ref*{item:trans_endl:KeinInvarianterZustand}:]
			Let $\Fi\in\Zst$ be a stationary state. By \refthm{thm:FloRecurrent} we have $\Fi(t_{\xi_i})=0$ for all $i\in I$. Since $p=\sum_{i\in I}{t_{\xi_i}}$ this yields $\Fi(p)=\sum_{i\in I}{\Fi(t_{\xi_i})}=0$.\qedhere
	\end{description}
\end{prf}

\section{Idempotent Markov Operators}\label{sec:Projections}

For a general unital completely positive projection $P$ on a C*-Algebra $\cA$, Choi and Effros have shown in \cite[Thm.\,3.1]{ce} that $P(\cA)$ can be turned into a C$^*$-algebra if a new multiplication $a\diamond b:= P(ab)$ is introduced on $P(\cA)$. In general, no further information about the structure of such maps seems to be available in the literature.

For a normal such map, i.e.\ an idempotent Markov operator $P$ on a von Neumann algebra $\cA\subseteq\BH$, we obtain a complete description of its structure. In particular, this will allow us to retrieve the Choi-Effros multiplication for such maps, thereby putting it into concrete terms.

Thus consider an idempotent Markov operator $P:\cA\to\cA$. Denote as above by $\ppr$ and $\ptr$ the maximal positive recurrent projection and the maximal transient projection for $P$, respectively, and set $\cApr:=\ppr\cA\ppr$ and $\cAtr:=\ptr\cA\ptr$.

If $\ppr=\Eins$ or, equivalently, if there is a faithful family of stationary normal states then it is well-known (cf.\ \cite[Thm.\,2.4]{kn}) that $P(\cA)$ is a von Neumann subalgebra and $P$ is a faithful normal conditional expectation from $\cA$ onto $P(\cA)$. If $\ppr\neq\Eins$ this does no longer need to be the case. As an example, let $\Fi\in\Zst(\cA)$ be some normal state on $\cA$ with support projection $p\neq\Eins$ and set $P(x):=pxp+\Fi(x)\,p^\bot$.

Remember that a conditional expectation $Q$ is called \emph{faithful} if $Q(x^*x)=0$ implies $x=0$.

\begin{thm}\label{thm:strukturVonProj}
	Let $P:\cA\to\cA$ be a Markov operator on a von Neumann algebra $\cA\subseteq\BH$ with $\ppr\neq\Eins$. Then the following conditions are equivalent:
	\begin{enumerate}[label=(\roman*)]
		\item\label{item:strukProj:idem} $P$ is idempotent, i.e.\ $P^2=P$.
		\item\label{item:strukProj:BedErw} There is a faithful normal conditional expectation $Q:\cApr\to\cApr$ and a completely positive unital normal map $S:Q(\cApr)\to\cAtr$ such that
		\[
			P(x)=Q(\ppr x\ppr)+S(Q(\ppr x\ppr)).
		\]
	\end{enumerate}
	\vspace{-2ex}
	In  this case $Q(x)=\ppr P(x)\ppr$ for $x\in\cApr$, hence $Q(\cApr)=\ppr P(\cA)\ppr$.
	
	This establishes a biunique correspondence between idempotent Markov operators on $\cA$ with maximal positive recurrent projection $\ppr\neq\Eins$ and pairs $(Q,S)$ where $Q$ is a faithful normal conditional expectation on $\cApr$ and $S:Q(\cApr)\to\cAtr$ a completely positive unital normal map. Moreover, $P$ is a conditional expectation if and only if $S$ is a *-homomorphism.
\end{thm}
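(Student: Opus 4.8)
The plan is to squeeze two structural facts out of idempotency — namely $\ptr=\ppr^{\bot}$ and $P(x)=P(\ppr x\ppr)$ — and then to recover $Q$ and $S$ as the two diagonal corners of $P$ with respect to $\ppr$.

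\emph{Step 1 (key lemma: $P(\ppr^{\bot})=0$).} Since $\ppr$ is subharmonic (\refrem{rem:pRsubharm}), $\ppr^{\bot}$ is superharmonic, so $h:=P(\ppr^{\bot})$ satisfies $0\le h\le\ppr^{\bot}$ and $h\in\fix P$. If $h\ne0$, choose a normal state $\omega$ with $\omega(h)>0$; then $\Fi:=\omega\circ P$ is a \emph{stationary} normal state, because $\Fi\circ P=\omega\circ P^{2}=\omega\circ P=\Fi$, and $\Fi(h)=\omega(P(h))=\omega(h)>0$. But every stationary normal state is supported under $\ppr$, and $\ppr h\ppr\le\ppr\ppr^{\bot}\ppr=0$, so $\Fi(h)=\Fi(\ppr h\ppr)=0$ — a contradiction. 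Hence $P(\ppr)=\Eins$, and by \refthmX{thm:RieszDecompThm}{\ref*{item:RieszDecompThm:potential}} the superharmonic projection $\ppr^{\bot}$ is a potential; with \refpropX{prop:uma05_rec_orth_trans}{\ref*{item:uma05_r_t:ppr_rec}} this forces $\ptr=\ppr^{\bot}$, so $\cAtr=\ppr^{\bot}\cA\ppr^{\bot}$. Finally, applying $\id_{2}\otimes P$ to the positive element $\bigl(\begin{smallmatrix}\ppr^{\bot}&\ppr^{\bot}x\\x^{*}\ppr^{\bot}&x^{*}x\end{smallmatrix}\bigr)$ of $M_{2}(\cA)$ and using $P(\ppr^{\bot})=0$ forces $P(\ppr^{\bot}x)=0$ (a positive $2\times2$ operator matrix with vanishing $(1,1)$-corner has vanishing off-diagonal); symmetrically $P(x\ppr^{\bot})=0$, hence $P(x)=P(\ppr x\ppr)$ for every $x\in\cA$.

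\emph{Step 2 ((i)$\folgt$(ii) and uniqueness).} Put $Q:=P_{\ppr}\colon\cApr\to\cApr$, $x\mapsto\ppr P(x)\ppr$; by \reflem{lem:charakSubharm} this is well defined and, since $P^{2}=P$ gives $Q^{2}(x)=\ppr P^{2}(x)\ppr=Q(x)$, it is an idempotent Markov operator on $\cApr$. Restricting to $\cApr$ a family of stationary normal states for $P$ whose supports have supremum $\ppr$ yields a family of stationary normal states for $Q$ that is faithful on $\cApr$; hence by \cite[Thm.\,2.4]{kn}, $Q(\cApr)$ is a von Neumann subalgebra of $\cApr$ and $Q\colon\cApr\to Q(\cApr)$ is a faithful normal conditional expectation. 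Idempotency of $P$ and Step 1 give $P(\ppr P(b)\ppr)=P^{2}(b)=P(b)$, i.e.\ $P(x)=P\bigl(Q(\ppr x\ppr)\bigr)$. For $a\in Q(\cApr)$ one has $Q(a)=a$ and $Q(a^{*}a)=a^{*}a$, so compressing the Kadison--Schwarz inequality $P(a)^{*}P(a)\le P(a^{*}a)$ by $\ppr$ yields $a^{*}a+(\ppr^{\bot}P(a)\ppr)^{*}(\ppr^{\bot}P(a)\ppr)\le a^{*}a$, whence $\ppr^{\bot}P(a)\ppr=0$ and, taking adjoints, $\ppr P(a)\ppr^{\bot}=0$; thus $P(a)=a+\ppr^{\bot}P(a)\ppr^{\bot}$. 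Defining $S(a):=P(a)-a=\ppr^{\bot}P(a)\ppr^{\bot}$ on $Q(\cApr)$ produces a normal completely positive unital map $S\colon Q(\cApr)\to\cAtr$ (unitality: $S(\ppr)=\ppr^{\bot}$, the unit of $\cAtr$, since $\ppr$ is the unit of $Q(\cApr)$), and $P(x)=P(Q(\ppr x\ppr))=Q(\ppr x\ppr)+S(Q(\ppr x\ppr))$, which is (ii). Compressing this identity by $\ppr$ gives back $\ppr P(x)\ppr=Q(\ppr x\ppr)$, so $Q$ (hence $Q(\cApr)=\ppr P(\cA)\ppr$) and then $S$ are uniquely determined.

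\emph{Step 3 ((ii)$\folgt$(i), the correspondence, and the last assertion).} Assuming the formula of (ii) and writing $a:=Q(\ppr x\ppr)\in Q(\cApr)$, one computes, using $Q(a)=a$, $\ppr S(\,\cdot\,)\ppr=0$ and $S(0)=0$, that $P^{2}(x)=P(a)+P(S(a))=(a+S(a))+0=P(x)$, so $P$ is idempotent. Conversely, starting from a faithful normal conditional expectation $Q$ on $\cApr$ and a normal completely positive unital $S\colon Q(\cApr)\to\cAtr$, define $P$ by the same formula; then $P$ is normal and unital, and completely positive since it is the composition of the completely positive maps $x\mapsto\ppr x\ppr$, $Q$, and $a\mapsto a+S(a)$ on the von Neumann subalgebra $Q(\cApr)$. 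The computation just given shows $P^{2}=P$; since $P(\ppr^{\bot})=0$ every stationary normal state of $P$ is supported under $\ppr$, while the stationary states $\omega\circ Q\circ(\ppr\,\cdot\,\ppr)$, for $\omega$ in a faithful family of normal states on $\cApr$, have supports with supremum $\ppr$; thus $\ppr(P)=\ppr$ and, by Step 1, $\ptr(P)=\ppr^{\bot}$, so this construction inverts that of Step 2 and the correspondence is biunique. Finally $\fix P=\Set{b+S(b)}{b\in Q(\cApr)}$, and because $\ppr\cA\ppr$ and $\ppr^{\bot}\cA\ppr^{\bot}$ are complementary corners, $(b_{1}+S(b_{1}))(b_{2}+S(b_{2}))=b_{1}b_{2}+S(b_{1})S(b_{2})$ lies in $\fix P$ if and only if $S(b_{1}b_{2})=S(b_{1})S(b_{2})$; since $S$ is automatically $*$-preserving, $\fix P$ is a $*$-subalgebra iff $S$ is a $*$-homomorphism, in which case Tomiyama's theorem (a norm-one projection onto a C$^{*}$-subalgebra is a conditional expectation) shows $P$ is a conditional expectation, the converse being immediate. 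The real work is in Step 1 — establishing $P(\ppr^{\bot})=0$, equivalently $\ptr=\ppr^{\bot}$ (so an idempotent Markov operator has no surviving null recurrent part), and deducing $P(x)=P(\ppr x\ppr)$; once these are in hand, everything else is bookkeeping with the two corners of $P$ and a single use of Kadison--Schwarz.
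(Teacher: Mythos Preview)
Your proof is correct and follows the same overall architecture as the paper's: establish $P(\ppr^{\bot})=0$ (hence $\ptr=\ppr^{\bot}$) via the stationary states $\omega\circ P$, deduce $P(x)=P(\ppr x\ppr)$, define $Q$ and $S$ as the two diagonal corners, and verify the decomposition and its converse. The arguments for (ii)$\folgt$(i), for the biuniqueness, and for the final assertion are essentially identical to the paper's (you invoke Tomiyama where the paper says ``$P$ is a conditional expectation iff $P(\cA)$ is multiplicatively closed'', which is the same thing).

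The one genuine technical difference is how you obtain the block-diagonal form $P(a)=\ppr P(a)\ppr+\ppr^{\bot}P(a)\ppr^{\bot}$ for $a\in Q(\cApr)$. The paper quotes an external result of Effros--St{\o}rmer \cite[Lem.\,1.2.(2)]{es} to conclude that $\ppr$ commutes with \emph{all} of $P(\cA)$; you instead compress the Kadison--Schwarz inequality by $\ppr$, using that $Q(a^{*}a)=a^{*}a$ on the subalgebra $Q(\cApr)$, to kill the off-diagonal corner $\ppr^{\bot}P(a)\ppr$. Your argument is more self-contained and uses only tools already present in the paper, at the cost of establishing block-diagonality only on $Q(\cApr)$ rather than on all of $P(\cA)$ --- which, as you correctly observe, is all that is needed once you have $P(x)=P(Q(\ppr x\ppr))$. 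Similarly, your $2\times2$-matrix derivation of $P(\ppr^{\bot}x)=0$ is a standard alternative to the paper's separating-family argument; both are routine once $P(\ppr^{\bot})=0$ is in hand.
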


For a normal state $\psi\in\Zst(\cA)$ with $\psi(\ppr)=1$ we denote its restriction $\psi\vert_{\cApr}\in\Zst(\cApr)$ by $\psi$ again. 

\begin{prf}
	\begin{description}[font=\normalfont\mdseries\em]
		\item[\ref*{item:strukProj:idem}$\folgt$\ref*{item:strukProj:BedErw}:]
			Since $\psi\circ P$ is a stationary normal state for every $\psi\in\Zst(\cA)$, we may write
			\[
				\ppr=\bigvee\Set{\,\supp(\psi\circ P)}{\psi\in\Zst(\cA)}.
			\]
			Hence $\psi(P(x))=\psi(P(\ppr x\ppr))$ for all $x\in\cA$ and $\psi\in\Zst(\cA)$. This implies $P(x)=P(\ppr x\ppr)$
			for every $x\in\cA$ and $P(x^*x)=0$ if and only if $\ppr x^*x\ppr=0$, since $(\psi\circ P)_{\psi\in\Zst(\cA)}$ is faithful on $\cApr$. Thus $\ppr$ is the \emph{support projection of $P$} as in \cite{es}.
			It follows that $P(\ptr)\leq P(\ppr^\bot)=0$, hence $P(\ppr)=\Eins$ and $\ppr^\bot$ is $P$-summable. Therefore, we have $\ppr+\ptr=\Eins$.

			The support projection of a normal unital positive projection on a JW-algebra has already been examined by Effros and St\o{}rmer: \cite[Lem.\,1.2.(2)]{es} implies that $\ppr$ commutes with all self adjoint elements in the range of $P$ and thus with all elements in $P(\cA)$. It follows that for all $x\in\cA$ we have
			\[
				P(x)=\ppr P(x)\ppr + \ptr P(x)\ptr.
			\]
			Let $Q:\,\cApr \to \cApr:\,x \mapsto \ppr P(x) \ppr$. Then $Q$ is normal, completely positive, and unital. Furthermore, $Q$ is idempotent, since $\ppr$ is the support projection of $P$ and hence for every $x\in\cApr$ we obtain
			\[
				Q^2(x) = \ppr P(\ppr P(x)\ppr)\ppr = \ppr P^2(x)\ppr = Q(x).
			\]
			Finally, if $\Fi\in\Zst(\cA)$ is stationary for $P$ then $\Fi$ is stationary for $Q$, too, since $\supp\Fi\leq\ppr$ and for every $x\in\cApr$ we have
			\[
				\Fi(Q(x))=\Fi(\ppr P(x)\ppr) = \Fi(P(x))=\Fi(x).
			\]
			Hence $Q$ has a faithful family of stationary normal states. Thus we infer from \cite[Thm.\,2.4]{kn} that $Q$ is a faithful normal conditional expectation onto the von Neumann subalgebra $Q(\cApr)=\ppr P(\cA)\ppr\subseteq\cApr$.
			
			Define $S:Q(\cApr)\ni x\mapsto \ptr P(x)\ptr\in\cAtr$. Then $S$ is normal,  completely positive, and unital. Summing up, for $x\in\cA$ we have:
			\begin{align*}
				P(x) & = P(\ppr x\ppr)=\ppr P(\ppr x\ppr)\ppr+\ptr P(\ppr x\ppr)\ptr \\
				& =\ppr P(\ppr x\ppr)\ppr+\ptr P(\ppr P(\ppr x\ppr)\ppr)\ptr \\
				& = Q(\ppr x\ppr)+\ptr P(Q(\ppr x\ppr))\ptr= Q(\ppr x\ppr)+ S(Q(\ppr x\ppr)).
			\end{align*}
		\item[\ref*{item:strukProj:BedErw}$\folgt$\ref*{item:strukProj:idem}:]
			We have to check that $P$ is idempotent. Since $\ppr S(Q(\ppr x\ppr))\ppr=0$, we obtain for $x\in\cA$:
			\begin{align*}
				P^2(x) & = P\bigl(Q(\ppr x\ppr)+S(Q(\ppr x\ppr))\bigr)\\
				&= Q(\ppr Q(\ppr x\ppr)\ppr)+S(Q(\ppr Q(\ppr x\ppr)\ppr))\\
				& = Q^2(\ppr x\ppr)+S(Q^2(\ppr x\ppr)) = P(x).\\
			\end{align*}
	\end{description}
	\vspace{-5ex}
	Clearly, this correspondence is biunique.
	
	It remains to prove the last assertion. An idempotent Markov operator $P$ is a conditional expectation if and only if $P(\cA)$ is multiplicatively closed. For each $x\in\cA$ set $x_Q:=Q(\ppr x\ppr)=\ppr P(\ppr x\ppr)\ppr=\ppr P(x)\ppr$. Then every $x\in P(\cA)$ is of the form $x=x_Q+S(x_Q)=\ppr x\ppr+S(\ppr x\ppr)$, since $x_Q=\ppr P(x)\ppr=\ppr x\ppr$. 

	If $S$ is a *-homomorphism then for $x,y\in\cA$ we have
	\begin{align*}
		P(x)P(y) &= (x_Q+S(x_Q))(y_Q+S(y_Q))=x_Q y_Q +S(x_Q)S(y_Q)\\
		& = x_Q y_Q + S(x_Q y_Q)\in P(\cA).
	\end{align*}
	On the other hand, if $S$ is not multiplicative then there are $x_Q,y_Q\in Q(\cApr)\subseteq\cA$ such that $S(x_Q)S(y_Q)\neq S(x_Q y_Q)$. Then $P(x_Q)=Q(\ppr x_Q\ppr)+S(Q(\ppr x_Q\ppr))=x_Q+S(x_Q)$ and
	\[
		\tilde x:=P(x_Q)P(y_Q)=x_Q y_Q +S(x_Q)S(y_Q)\neq x_Q y_Q +S(x_Q y_Q)=P(\tilde x),
	\]
	i.e.\ $P(x_Q)P(y_Q)=\tilde x\notin P(\cA)$. Hence $P(\cA)$ is not multiplicatively closed.
\end{prf}

\begin{cor}\emph{\textbf{(Choi-Effros product)}}\label{cor:ChoiEffrosProduct}
	Let $P:\cA\to\cA$ be an idempotent Markov operator on a von Neumann algebra $\cA\subseteq\BH$. Then $P(\cA)$ becomes an abstract von Neumann algebra with the new product $x\diamond y:=P(xy)$ for $x,y\in P(\cA)$ while the involution and Banach space structure are inherited from $\cA$.\\
	Moreover, the map $P\vert_{Q(\cApr)}$ is a  *-isomorphism from $\ppr P(\cA)\ppr=Q(\cApr)$ to $(P(\cA),\diamond)$.
\end{cor}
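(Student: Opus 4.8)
The plan is to reduce everything to the structure theorem \refthm{thm:strukturVonProj}, after disposing of the easy case. If $\ppr=\Eins$ then, as recalled just before \refthm{thm:strukturVonProj}, $P$ is a faithful normal conditional expectation of $\cA$ onto the von Neumann subalgebra $P(\cA)$; the module property then gives $x\diamond y=P(xy)=xy$ for $x,y\in P(\cA)$, so $\diamond$ is the ambient product, $(P(\cA),\diamond)$ is already a von Neumann algebra, $Q(\cApr)=\ppr P(\cA)\ppr=P(\cA)$, and $P|_{Q(\cApr)}=\id_{P(\cA)}$ is trivially the asserted $*$-isomorphism. So I assume $\ppr\neq\Eins$ from now on.

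In this case \refthm{thm:strukturVonProj} supplies $\ppr+\ptr=\Eins$, a faithful normal conditional expectation $Q$ onto the von Neumann subalgebra $Q(\cApr)=\ppr P(\cA)\ppr\subseteq\cApr$, a normal unital completely positive map $S\colon Q(\cApr)\to\cAtr$ with $P(x)=Q(\ppr x\ppr)+S(Q(\ppr x\ppr))$, and the fact that $\ppr$ commutes with every element of $P(\cA)$. I would then prove that $\Phi:=P|_{Q(\cApr)}\colon Q(\cApr)\to P(\cA)$ is an isometric, $*$-preserving algebra isomorphism from the (genuine) von Neumann algebra $Q(\cApr)$ onto $(P(\cA),\diamond)$; transporting structure through $\Phi$ then gives the corollary.

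The verification of the properties of $\Phi$ proceeds as follows. For $a\in Q(\cApr)$ one has $Q(\ppr a\ppr)=Q(a)=a$, hence $\Phi(a)=a+S(a)$; since $a\in\cApr$ and $S(a)\in\cAtr$ and $\ppr+\ptr=\Eins$ with $\ppr\ptr=0$, this is ``block diagonal'', so $\ppr\Phi(a)\ppr=a$ --- giving injectivity --- and, for any $x\in P(\cA)$, the structure formula applied to $x=P(z)$ yields $\ppr x\ppr=Q(\ppr z\ppr)\in Q(\cApr)$ and $x=\ppr x\ppr+S(\ppr x\ppr)=\Phi(\ppr x\ppr)$, giving surjectivity onto $P(\cA)$. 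Moreover $\|\Phi(a)\|=\max\{\|a\|,\|S(a)\|\}=\|a\|$ because $S$ is unital and positive, so $\Phi$ is isometric, and $*$-preservation and linearity are immediate from positivity of $P$. The crucial point is multiplicativity: for $a,b\in Q(\cApr)$ the computation carried out in the proof of \refthm{thm:strukturVonProj} gives $P(a)P(b)=Q(\ppr a\ppr)Q(\ppr b\ppr)+S(Q(\ppr a\ppr))S(Q(\ppr b\ppr))=ab+S(a)S(b)$ (the $\ppr$-$\ptr$ cross terms vanish since $\ppr\ptr=0$); applying $P$ annihilates $S(a)S(b)\in\cAtr$, while $ab\in Q(\cApr)$ gives $P(ab)=ab+S(ab)=\Phi(ab)$, so $\Phi(a)\diamond\Phi(b)=P\bigl(P(a)P(b)\bigr)=\Phi(ab)$.

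Finally, since $\Phi$ is an isometric $*$-algebra isomorphism onto $P(\cA)$ (which is norm-closed, being $\ker(\id-P)$, hence complete), the C$^*$-identity and the predual of the von Neumann algebra $Q(\cApr)$ are carried over to $(P(\cA),\diamond)$: it is an abstract von Neumann algebra with involution and norm inherited from $\cA$, and $\Phi=P|_{Q(\cApr)}$ is the claimed $*$-isomorphism onto it. I expect the multiplicativity identity to be the only genuine difficulty, and it boils down to the single structural observation that the two summand ranges $Q(\cApr)\subseteq\cApr$ and $S(Q(\cApr))\subseteq\cAtr$ live in complementary corners of $\cA$, so every mixed product collapses.
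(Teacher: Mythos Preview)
Your proof is correct and follows essentially the same route as the paper: identify $P(\cA)$ with $Q(\cApr)$ via $\Phi(a)=a+S(a)$ and verify that this carries the ordinary product on $Q(\cApr)$ to the Choi--Effros product, the key computation being $P\bigl((a+S(a))(b+S(b))\bigr)=P(ab)=ab+S(ab)$. The only cosmetic difference is at the very end: the paper checks the C$^*$-identity directly via Kadison--Schwarz and obtains the predual from weak* closedness of $P(\cA)$ together with Sakai's theorem, whereas you transport both the C$^*$-identity and the predual through the isometric $*$-isomorphism $\Phi$ from the genuine von Neumann algebra $Q(\cApr)$; both arguments are valid and equally short.
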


\begin{prf}
	As in the proof of \refthm{thm:strukturVonProj} set $x_Q:=Q(\ppr x\ppr)$ for $x\in\cA$. Then $x_Q$ and $S(x_Q)$ have orthogonal support projections and, as a vector space, $P(\cA)=\Set{x_Q+S(x_Q)}{x\in\cA}$ is isomorphic to the graph of $S$ and thus to $Q(\cApr)$. Employing our structure theorem for $P$, we obtain for $x,y\in P(\cA)$ more explicitly:
	\begin{align*}
		x\diamond y & = P\bigl((x_Q+S(x_Q))(y_Q+S(y_Q))\bigr) = P\bigl(x_Qy_Q+S(x_Q)S(y_Q)\bigr) \\
		& = P\bigl(x_Qy_Q\bigr)+P\bigl(\ppr S(x_Q)S(y_Q)\ppr\bigr) = P(x_Qy_Q) = x_Qy_Q+S(x_Qy_Q). 
	\end{align*}
	Hence the Choi-Effros product corresponds to the original one on $Q(\cApr)$. Therefore, it is associative. As in \cite{ce} we easily see that the norm is submultiplicative for this product and satisfies the C$^*$-property by using the corresponding properties for the original product and the Kadison-Schwarz inequality.
	Since $P$ is idempotent and normal, its range $(P(\cA),\diamond)$ is a weak*-closed *-algebra. This implies that $P(\cA)$ has a predual and, therefore, becomes an abstract von Neumann algebra by a well-known theorem of Sakai.
\end{prf}

The main work of Choi and Effros in \cite{ce} goes into showing that this product is associative. In our setting this follows more easily, since we have identified the Choi-Effros multiplication with the usual multiplication on $Q(\cApr)=\ppr P(\cA)\ppr$.
As pointed out to us by Izumi, another characterization of the Choi-Effros product was given by Arveson: He interpreted the Choi-Effros product as the usual multiplication on the fixed point algebra of a minimal dilation of $T$ (cf.\ \cite{izudial}).

For convenience we collect some consequences which follow immediately from the proof of \refthm{thm:strukturVonProj}.

\begin{cor}\label{cor:idempotentMarkovOp}
	Let $P^2=P:\cA\to\cA$ be a Markov operator on some von Neumann algebra $\cA\subseteq\BH$ with a decomposition as in  \refthm{thm:strukturVonProj}.
	\begin{enumerate}[label=(\alph*)]
		\item There exists no null recurrent part, i.e.\ $\ppr+\ptr=\Eins$.
		\item Any transient projection is mapped to zero. In particular, $P(\ptr)=0$.
		\item The maximal positive recurrent projection $\ppr$ is the \emph{support projection} $(\supp P)$ of $P$, i.e.\ $P(x)=P(\ppr x\ppr)=P(\ppr\,x\ppr\,\ppr)=P(x\ppr)=P(\ppr x)$ for all $x\in\cA$, and $P(x^*x)=0$ if and only if $\ppr x^*x\ppr=0$.
		\item The range of $P$ is a sum of the transient part and the positive recurrent part: $P(x)=\ppr P(x)\ppr+\ptr P(x)\ptr$ for every $x\in\cA$.
		\item For $x\in Q(\cApr)=\ppr P(\cA)\ppr$ we have $P(x)=x+S(x)\in P(\cA)$.
	\end{enumerate}
\end{cor}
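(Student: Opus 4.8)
The plan is to harvest each of (a)--(e) from the proof of \refthm{thm:strukturVonProj}, where all the substantive work has already been done. The single identity that drives everything is the one established at the outset of that proof: writing $\ppr=\bigvee\Set{\supp(\psi\circ P)}{\psi\in\Zst(\cA)}$ and using that the family $(\psi\circ P)_{\psi\in\Zst(\cA)}$ is faithful on $\cApr$, one gets $P(x)=P(\ppr x\ppr)$ for every $x\in\cA$, together with the fact that $\ppr$ is the support projection of $P$ in the sense of \cite{es}. First I would read off (c): substituting $x\ppr$, respectively $\ppr x$, for $x$ in $P(\,\cdot\,)=P(\ppr\,\cdot\,\ppr)$ yields $P(x\ppr)=P(\ppr x\ppr\ppr)=P(\ppr x\ppr)=P(x)$ and likewise $P(\ppr x)=P(x)$, while the kernel characterization $P(x^*x)=0\iff\ppr x^*x\ppr=0$ is precisely what faithfulness of $(\psi\circ P)_\psi$ on $\cApr$ provides.

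Next I would deduce (b) and (a). Taking $x=\ppr^\bot$ in $P(x)=P(\ppr x\ppr)$ gives $P(\ppr^\bot)=0$, hence $P(\ppr)=\Eins$. By \refpropX{prop:uma05_rec_orth_trans}{.\ref*{item:uma05_r_t:ppr_rec}} we have $\ptr\leq\ppr^\bot$, so positivity of $P$ forces $0\leq P(\ptr)\leq P(\ppr^\bot)=0$, i.e.\ $P(\ptr)=0$; any transient projection $p$ satisfies $p\leq\ptr$ and hence $P(p)=0$ as well. For (a) idempotency enters: $P^n(\ppr^\bot)=P(\ppr^\bot)=0$ for all $n\geq1$, so $\sum_{n=0}^\infty P^n(\ppr^\bot)=\ppr^\bot\in\cA_+$, i.e.\ $\ppr^\bot$ is $P$-summable and therefore transient, whence $\ppr^\bot\leq\ptr$. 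Together with $\ptr\leq\ppr^\bot$ this gives $\ptr=\ppr^\bot$, so $\ppr+\ptr=\Eins$ and consequently $\pnr=\Eins-(\ptr+\ppr)=0$.

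Finally (d) and (e). In the proof of \refthm{thm:strukturVonProj} it was shown, via \cite[Lem.\,1.2.(2)]{es}, that $\ppr$ commutes with every element of $P(\cA)$; hence for $x\in\cA$ one has $P(x)=\ppr P(x)\ppr+\ppr^\bot P(x)\ppr^\bot$, and replacing $\ppr^\bot$ by $\ptr$ (using (a)) gives (d). For (e), let $x\in Q(\cApr)=\ppr P(\cA)\ppr$; then $\ppr x\ppr=x$, and since $Q$ is idempotent with range $Q(\cApr)$ we have $Q(x)=x$, so the structure formula $P(x)=Q(\ppr x\ppr)+S(Q(\ppr x\ppr))$ of \refthm{thm:strukturVonProj} collapses to $P(x)=x+S(x)$, which lies in $P(\cA)$ by definition.

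I expect no genuine obstacle here beyond bookkeeping; the one step that deserves a moment's care is the seemingly circular passage in (a), where $\ptr=\ppr^\bot$ is obtained by combining the inclusion $\ptr\leq\ppr^\bot$ --- valid for any Markov operator by \refprop{prop:uma05_rec_orth_trans} --- with the reverse inclusion $\ppr^\bot\leq\ptr$, which is special to the idempotent case and uses nothing beyond $P^2=P$.
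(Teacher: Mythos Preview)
Your proposal is correct and follows exactly the paper's approach: the paper gives no separate proof for this corollary, stating only that these are ``consequences which follow immediately from the proof of \refthm{thm:strukturVonProj}'', and your extraction of each item from that proof matches line by line the arguments already present there (including the $P$-summability of $\ppr^\bot$, the Effros--St{\o}rmer commutation, and the collapse of the structure formula on $Q(\cApr)$).
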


As an illustration of such a decomposition, we include the following classical example (cf.\ also \refexmp{exmp:eb}).

\begin{exmp}\label{exmp:klassischeProjektion}
	Let $\Omega$ be a state space with three points and let $P:=\left(\begin{smallmatrix}1&0&\,0\\0&1&\,0\\ \!\nicefrac13&\nicefrac23&\,0\end{smallmatrix}\right)$ be the transition matrix of a classical Markov chain on $\Omega$. As in \refsec{sec:klassisch} the map $P=P^2$ can be regarded as a Markov operator on the algebra $\cA:=\CC^3$ of functions on $\Omega$. It is easily seen that $P(\cA) =\Span\set{\left(\begin{smallmatrix}1\\0\\\nicefrac13\end{smallmatrix}\right),\left(\begin{smallmatrix}0\\1\\\nicefrac23\end{smallmatrix}\right)}$ and $\ppr=\left(\begin{smallmatrix}1\\1\\0\end{smallmatrix}\right)$. Hence $\cApr=\CC^2\oplus0$, $\cAtr=0\oplus\CC$, and the decomposition according to \refthm{thm:strukturVonProj} is given by
	\[
		Q:\,\cApr\to\cApr:\;x\mapsto x\qquad\mbox{and}\qquad S:\,\cApr\to\cAtr:\;\left(\begin{smallmatrix}a\vphantom{\frac13}\\b\vphantom{\frac13}\\0\vphantom{\frac13}\end{smallmatrix}\right)\mapsto\left(\begin{smallmatrix}0\vphantom{\frac13}\\0\vphantom{\frac13}\\\frac13a+\frac23b\end{smallmatrix}\right).
	\]
\end{exmp}

\section{Non-Commutative Poisson Boundaries and Poisson Integrals}\label{sec:poisson}

It is natural to ask to what extent the results of the previous section can be carried over from the range of a projection to the fixed space $\fix T$ of a general Markov operator $T$. The main result in this section shows that a Choi-Effros product on $\fix T$ allows an identification as in the last statement of \refcor{cor:ChoiEffrosProduct} if and only if $T$ is weak* mean ergodic. In this case the identification may be viewed as an abstract Poisson integral.

If $T:\cA\to\cA$ is any Markov operator on a von Neumann algebra $\cA\subseteq\BH$ then there is a (not necessarily normal) projection $P$ in the pointwise weak*-closure of the convex hull $\co\Set{T^n}{n\in\NN}$ satisfying $TP=PT=P=P^2$ (see, e.g., \cite{es}). This implies $P(\cA)=\fix T$ and following \cite{luc} we call such $P$ an \emph{ergodic projection}. Even though $P$ may not be normal we define $\ppr(P)$ as in \refdefn{defn:recurrent}.

\begin{lem}\label{lem:pprT=pprP}
	Let $T:\cA\to\cA$ be a Markov operator on a von Neumann algebra $\cA\subseteq\BH$ and $P$ an ergodic projection for $T$. Then the maximal positive recurrent projections for $T$ and $P$ coincide, i.e.\ $\ppr(T)=\ppr(P)$.
\end{lem}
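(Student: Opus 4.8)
The plan is to show that $T$ and $P$ have \emph{exactly the same} stationary normal states; once this is established, the equality $\ppr(T)=\ppr(P)$ is immediate, since by definition each of these projections is the supremum of the support projections of the respective stationary normal states, and $\supp\Fi$ is a well-defined projection in $\cA$ for any normal state $\Fi$ (so that $\ppr(P)$ makes sense even though $P$ need not be normal).

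First I would prove that every $T$-stationary normal state is $P$-stationary, hence $\ppr(T)\le\ppr(P)$. If $\Fi\in\Zst(\cA)$ satisfies $\Fi\circ T=\Fi$, then $\Fi\circ T^n=\Fi$ for all $n\in\NN$, and consequently $\Fi\circ S=\Fi$ for every $S$ in the convex hull $\co\Set{T^n}{n\in\NN}$. Since $P$ lies in the pointwise weak*-closure of this convex hull, for each $x\in\cA$ there is a net $(S_\alpha)$ in $\co\Set{T^n}{n\in\NN}$ with $S_\alpha(x)\to P(x)$ in the weak* topology; as $\Fi$ is normal, it is weak*-continuous, so $\Fi(P(x))=\lim_\alpha\Fi(S_\alpha(x))=\Fi(x)$. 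Thus $\Fi\circ P=\Fi$ and $\supp\Fi\le\ppr(P)$, and taking the supremum over all such $\Fi$ gives $\ppr(T)\le\ppr(P)$.

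Conversely I would use the relation $PT=P$ to show that every $P$-stationary normal state is $T$-stationary: if $\psi\in\Zst(\cA)$ satisfies $\psi\circ P=\psi$, then for every $x\in\cA$,
\[
	\psi(T(x))=\psi\bigl(P(T(x))\bigr)=\psi\bigl(P(x)\bigr)=\psi(x),
\]
so $\psi\circ T=\psi$ and $\supp\psi\le\ppr(T)$; hence $\ppr(P)\le\ppr(T)$. Combining the two inequalities yields $\ppr(T)=\ppr(P)$.

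There is no serious obstacle here: the only step requiring any care is the passage $\Fi(S_\alpha(x))\to\Fi(P(x))$, which relies precisely on $\Fi$ being a \emph{normal} functional and on the closure in the definition of an ergodic projection being taken in the pointwise weak* topology — exactly the hypotheses recalled just before the statement. Everything else is formal manipulation with the identities $TP=PT=P=P^2$.
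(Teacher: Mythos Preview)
Your proof is correct and is essentially identical to the paper's own argument: both show that $T$ and $P$ have exactly the same stationary normal states, one direction via the weak*-continuity of a normal $\Fi$ applied to a net in $\co\Set{T^n}{n\in\NN}$ converging pointwise-weak* to $P$, and the converse via the identity $PT=P$.
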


\begin{prf}
	Let $(\Phi_\alpha)$ be a net in $\co\Set{T^n}{n\in\NN}$ converging in the pointwise weak*-topo\-logy to an ergodic projection $P$ and let $\Fi\in\Zst(\cA)$. If $\Fi=\Fi\circ T$ then $\Fi=\Fi\circ \Phi_\alpha$, hence $\Fi=\Fi\circ P$.

	Conversely, if $\Fi=\Fi\circ P$ then $\Fi(T(x))=\Fi\circ P(T(x))=\Fi\circ P(x)=\Fi(x)$ for every $x\in\cA$. 
\end{prf}

If there exists a \emph{normal} ergodic projection for $T$ then this is the only ergodic projection and $T$ is called \emph{weak* mean ergodic} (cf.\ \cite{kn}). As in \refsec{sec:Projections} we abbreviate $\ppr\cA\ppr$ by $\cApr$.

\begin{prop}\label{prop:posRecFixRaumvNA}\emph{\cite{luc}}\ \
	Let $T:\cA\to\cA$ be a Markov operator on a von Neumann algebra $\cA\subseteq\BH$.
	\begin{enumerate}[label=(\alph*)]
		\item The map $\Tpr:\cApr\to\cApr:x\mapsto\ppr T(x)\ppr$ admits a faithful family of stationary normal states. In particular, $\Tpr$ is weak* mean ergodic.
		\item $\ppr\fix T\ppr=\fix\Tpr$ is a von Neumann algebra acting on the Hilbert space $\ppr\hrH$.
	\end{enumerate}
\end{prop}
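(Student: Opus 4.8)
The plan is to establish that $\Tpr$ is a Markov operator on the corner algebra $\cApr=\ppr\cA\ppr$ carrying a faithful family of normal stationary states, and to derive both assertions from this. First I would do the bookkeeping: since $\ppr$ is subharmonic by \refrem{rem:pRsubharm}, \reflem{lem:charakSubharm} applies with $p=\ppr$, so $\Tpr$ maps $\cApr$ into itself, $\Tpr(\ppr)=\ppr T(\ppr)\ppr=\ppr$ because $T(\ppr)\geq\ppr$, and $\Tpr^n(x)=\ppr T^n(x)\ppr$ for every $x\in\cApr$ and $n\in\NN$ (\reflemX{lem:charakSubharm}{(b)}); normality and complete positivity pass from $T$ to $\Tpr$. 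Thus $\Tpr$ is a Markov operator on the von Neumann algebra $\cApr$, which acts on $\ppr\hrH$ with unit $\ppr$.

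For (a), let $\Fi$ be any stationary normal state of $T$. By the definition of $\ppr$ as the supremum of all supports of such states, $\supp\Fi\leq\ppr$, hence $\Fi(\ppr)=1$ and $\Fi(\ppr x\ppr)=\Fi(x)$ for all $x\in\cA$. Therefore the restriction of $\Fi$ to $\cApr$ is a normal state with $\Fi(\Tpr(x))=\Fi(\ppr T(x)\ppr)=\Fi(T(x))=\Fi(x)$ for $x\in\cApr$, i.e.\ a stationary normal state for $\Tpr$. The family of all these restrictions is faithful on $\cApr$: if $x\in\cApr$ satisfies $\Fi(x^*x)=0$ for every stationary normal state $\Fi$ of $T$, then $\supp(x^*x)\leq\bigwedge_\Fi(\supp\Fi)^\bot=\ppr^\bot$ while also $\supp(x^*x)\leq\ppr$, forcing $x^*x=0$. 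This proves the first part of (a); the ``in particular'' then follows, since a Markov operator admitting a faithful family of normal stationary states is weak* mean ergodic (cf.\ \cite{kn}; this is also part of \cite{luc}).

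For (b) I would first treat the inclusion $\ppr\fix T\ppr\subseteq\fix\Tpr$: if $a\in\fix T$ then \reflemX{lem:charakSubharm}{(a)} (with $x=a$) gives $\ppr T(a)\ppr=\ppr T(\ppr a\ppr)\ppr$, so $\Tpr(\ppr a\ppr)=\ppr T(\ppr a\ppr)\ppr=\ppr T(a)\ppr=\ppr a\ppr$. For the reverse inclusion I would bring in an ergodic projection $P$ for $T$; such a $P$ exists in the pointwise weak*-closure of $\co\Set{T^n}{n\in\NN}$ and satisfies $P(\cA)=\fix T$ (see the discussion preceding this proposition and \cite{es}). For $b\in\fix\Tpr$, every $\Phi=\sum_k c_k T^{n_k}$ in that convex hull satisfies $\ppr\Phi(b)\ppr=\sum_k c_k\,\Tpr^{n_k}(b)=b$ by the power formula and $b\in\fix\Tpr$; passing to the weak*-limit yields $\ppr P(b)\ppr=b$, and $P(b)\in\fix T$. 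Hence $b\in\ppr\fix T\ppr$, and $\ppr\fix T\ppr=\fix\Tpr$.

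It remains to see that $\fix\Tpr$ is a von Neumann algebra on $\ppr\hrH$. It is a weak*-closed, self-adjoint, unital subspace of the von Neumann algebra $\cApr$, so only multiplicative closure needs checking. For $b\in\fix\Tpr$ the Kadison--Schwarz inequality gives $\Tpr(b^*b)\geq\Tpr(b)^*\Tpr(b)=b^*b$, and since every member $\psi$ of the faithful family from (a) is stationary, $\psi(\Tpr(b^*b)-b^*b)=0$; faithfulness of the family then forces $\Tpr(b^*b)=b^*b$, and likewise $\Tpr(bb^*)=bb^*$, so $b$ lies in the multiplicative domain of $\Tpr$. Consequently $\Tpr(bc)=\Tpr(b)\Tpr(c)=bc$ for all $b,c\in\fix\Tpr$, so $\fix\Tpr$ is a von Neumann subalgebra of $\cApr$. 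I expect the only genuinely delicate point to be the weak* mean ergodicity asserted in (a) when $\cApr$ fails to be $\sigma$-finite, so that one has a faithful family of normal stationary states rather than a single faithful one; once that family is available, the remaining steps are routine.
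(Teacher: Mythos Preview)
Your proof is correct and follows essentially the same route as the paper: both verify that $\Tpr$ is a Markov operator on $\cApr$ via \reflem{lem:charakSubharm}, restrict stationary normal states of $T$ to obtain a faithful stationary family for $\Tpr$, invoke \cite{kn} for weak* mean ergodicity, and prove the equality $\ppr\fix T\ppr=\fix\Tpr$ using \reflem{lem:charakSubharm} for one inclusion and an ergodic projection (weak*-limit of convex combinations of $T^n$) for the other. The only difference is cosmetic: the paper obtains both weak* mean ergodicity and the von Neumann algebra property of $\fix\Tpr$ from a single citation of \cite[Thm.\,2.4]{kn}, whereas you unpack the latter via the standard Kadison--Schwarz/multiplicative-domain argument, which is precisely what underlies that theorem.
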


This result is already contained in \cite[Thm.\,3,\:Cor.\,4]{luc} for arbitrary semigroups of positive (but not necessarily completely positive) contractions. For convenience we will give a proof which is adapted to our situation.

\begin{prf}
	The map $\Tpr$ is normal and completely positive and it is unital, since
	\[
		\Tpr(\ppr)=\ppr T(\ppr)\ppr = \ppr T(\ppr \Eins\ppr)\ppr \stackrel{\refKlammern{lem:charakSubharm}}{=} \ppr T(\Eins)\ppr=\ppr\Eins\ppr=\ppr. 
	\]
	Let $(\Fi_i)_{i\in I}\subseteq\Zst(\cA)$ be a family of stationary normal states for $T$ such that $\ppr=\bigvee_{i\in I}\supp\Fi_i$. Then $(\Fi_i)_{i\in I}$ is faithful on $\cApr$ and each $\Fi_i$ is stationary for $\Tpr$, because $\Fi_i(\Tpr(x))=\Fi_i(\ppr T(x)\ppr)=\Fi_i(T(x))=\Fi_i(x)$ for all $x\in\cApr$. This allows us to apply \cite[Thm.\,2.4]{kn} to obtain that $\Tpr$ is weak* mean ergodic and that $\fix{\Tpr}=\Set{x\in\cApr}{\Tpr(x)=x}$ is a von Neumann subalgebra of $\cApr$.

	From $\Tpr(\ppr x\ppr) 
	=\ppr T(x)\ppr=\ppr x\ppr$ for $x\in\fix T$ it follows that $\ppr\fix T\ppr\subseteq\fix\Tpr$.
	Conversely, let $y\in\fix{\Tpr}\subseteq\cA$. Then $y=\Tpr^n(y)\stackrel{\refKlammern{lem:charakSubharm}}{=}\ppr T^n(y)\ppr$ and thus $y=\ppr\Phi(y)\ppr$ for all $\Phi\in\co\Set{T^n}{n\in\NN}$. Let $P:\cA\to\cA$ be an ergodic projection for $T$ and $(\Phi_\alpha)$ a net in $\co\Set{T^n}{n\in\NN}$ converging to $P$ in the pointwise weak*-topology. Defining $x:=P(y)$ we have $T(x)=x$ and
	\[
		y = \wslim\nolimits_\alpha \ppr\Phi_\alpha(y)\ppr = \ppr\bigl( \wslim\nolimits_\alpha \Phi_\alpha(y) \bigr)\ppr = \ppr x\ppr \;\in\;\ppr\fix T\ppr.
	\]
	Hence $\ppr\fix T\ppr=\fix{\Tpr}$ is a von Neumann subalgebra of $\cApr$.
\end{prf}

The set of fixed points of a Markov operator $T$ can be turned into an abstract von Neumann algebra by using the Choi-Effros product w.r.t.\ an ergodic projection $P$ (cf.\ \refsec{sec:Projections}). This result was already proven by Effros and St{\o}rmer in the context of positive operators on JW-algebras \cite[Cor.\,1.6]{es}. More recently, this structure was studied and identified as \emph{non-commutative Poisson boundary} in \cite{izugrp} and \cite{izu} (cf.\ also \cite{arv04}).

\begin{thm}\label{thm:WeakStarMeanErgodic}
	Let $T:\cA\to\cA$ be a Markov operator on a von Neumann algebra $\cA\subseteq\BH$ and let $\ppr$ be its maximal positive recurrent projection. Then the following statements are equivalent:
	\begin{enumerate}[label=(\roman*)]
		\item\label{item:wme:wme} $T$ is weak* mean ergodic.
		\item\label{item:wme:ptr_to_eins} There is an ergodic projection $P$ such that $P(\ppr)=\Eins$.
		\item\label{item:wme:injective} The map $\fix T\ni x\mapsto\ppr x\ppr\in\ppr\fix T\ppr$ is injective (hence bijective).
		\item\label{item:wme:isom_effrosProd} There is an ergodic projection $P$ such that the map
		\[
			J_T:\,\ppr\fix T\ppr\to (\fix T,\diamond):\;x\mapsto P(x)
		\]
		is a *-isomorphism, where $a\diamond b:=P(ab)$ denotes the corresponding Choi-Effros product.
	\end{enumerate}
\end{thm}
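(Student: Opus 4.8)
The plan is to establish the cyclic chain (i)\,$\folgt$\,(iv)\,$\folgt$\,(iii)\,$\folgt$\,(ii)\,$\folgt$\,(i), using the structure theory of \refsec{sec:Projections} for the first step, (the proof of) \refprop{prop:posRecFixRaumvNA} together with \refrem{rem:pRsubharm} for the two middle steps, and a multiplicative-domain argument for the last step, which carries the real content.

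\emph{(i)\,$\folgt$\,(iv).} If $T$ is weak* mean ergodic, its unique ergodic projection $P$ is normal, hence an idempotent Markov operator with $P(\cA)=\fix T$, and $\ppr(P)=\ppr(T)=\ppr$ by \reflem{lem:pprT=pprP}. When $\ppr\neq\Eins$ this is exactly the situation of \refthm{thm:strukturVonProj}; since $Q(\cApr)=\ppr P(\cA)\ppr=\ppr\fix T\ppr$, the last assertion of \refcor{cor:ChoiEffrosProduct} says precisely that $y\mapsto P(y)$ is a $*$-isomorphism of $\ppr\fix T\ppr$ onto $(\fix T,\diamond)$, which is (iv). The boundary case $\ppr=\Eins$ is covered by \cite[Thm.\,2.4]{kn}: there $P$ is a faithful normal conditional expectation onto the von Neumann subalgebra $\fix T$, so $\diamond$ is the original product and $J_T=\id$.

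\emph{(iv)\,$\folgt$\,(iii), and (iii)\,$\folgt$\,(ii).} For any ergodic projection $P$ the proof of \refprop{prop:posRecFixRaumvNA} gives $\ppr P(y)\ppr=y$ for every $y\in\ppr\fix T\ppr=\fix\Tpr$. Hence, under (iv), the map $x\mapsto\ppr x\ppr$ restricted to $\fix T$ is a left inverse of the bijection $J_T$, so it equals $J_T^{-1}$ and is itself bijective; in particular it is injective, which is (iii). For (iii)\,$\folgt$\,(ii) fix any ergodic projection $P$. Since $\ppr$ is subharmonic for $T$ (\refrem{rem:pRsubharm}), $T^n(\ppr)\geq\ppr$ for all $n$, and passing to the pointwise weak*-limit of convex combinations of the $T^n$ gives $P(\ppr)\geq\ppr$, i.e.\ $P(\ppr^\bot)\leq\ppr^\bot$. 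Thus $\ppr P(\ppr^\bot)\ppr=0$, and since $P(\ppr^\bot)\in\fix T$, injectivity forces $P(\ppr^\bot)=0$, that is $P(\ppr)=\Eins$.

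\emph{(ii)\,$\folgt$\,(i).} This is the heart of the matter: from an ergodic projection $P$ with $P(\ppr^\bot)=0$ one must construct a \emph{normal} one. As $P$ is unital completely positive and $P(\ppr^\bot)=0$, the self-adjoint element $\ppr^\bot$ satisfies $P\bigl((\ppr^\bot)^{2}\bigr)=P(\ppr^\bot)=0=P(\ppr^\bot)^{2}$ and hence lies in the multiplicative domain of $P$; therefore $P(b\,\ppr^\bot)=P(\ppr^\bot b)=0$ for all $b\in\cA$ and so $P(x)=P(\ppr x\ppr)$ for every $x\in\cA$. In particular $x\in\fix T$ with $\ppr x\ppr=0$ forces $x=P(x)=0$, so $x\mapsto\ppr x\ppr$ is a positive linear bijection of $\fix T$ onto $\ppr\fix T\ppr=\fix\Tpr$ with positive inverse $y\mapsto P(y)$; since $T$ is normal, $\fix T$ is closed under suprema of bounded increasing nets, as is $\fix\Tpr$ (\refprop{prop:posRecFixRaumvNA}), so this normal bijection has normal inverse $J_T$. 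Let $\bar P$ be the unique (normal) ergodic projection of $\Tpr$, which is weak* mean ergodic by \refprop{prop:posRecFixRaumvNA}. Using \reflem{lem:charakSubharm} and $PT=P$ one checks that $a\mapsto\ppr P(a)\ppr$ is an ergodic projection for $\Tpr$, hence equals $\bar P$ by uniqueness, i.e.\ $\ppr P(x)\ppr=\bar P(\ppr x\ppr)$ for all $x$. Since an element of $\fix T$ is determined by its $\ppr$-corner, it follows that $P=J_T\circ\bar P\circ(\ppr\,\cdot\,\ppr)$, a composition of normal maps, so $P$ is normal and $T$ is weak* mean ergodic. The delicate points I anticipate are the normality of the inverse $J_T$ (a bijective normal positive map between spaces closed under monotone suprema has normal inverse) and the identification $\ppr P(\,\cdot\,)\ppr=\bar P$ through uniqueness of the ergodic projection of $\Tpr$.
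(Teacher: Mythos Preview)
Your proof is correct, but it arranges the cycle differently from the paper and differs substantively in how the loop is closed. The paper proves (i)$\Rightarrow$(ii)$\Rightarrow$(iii)$\Rightarrow$(i) together with the side branch (i)$\Rightarrow$(iv)$\Rightarrow$(ii); its step (ii)$\Rightarrow$(iii) is exactly your Kadison--Schwarz/multiplicative-domain computation $P(x)=P(\ppr x\ppr)$, and its (iv)$\Rightarrow$(ii) is the one-line observation $\Eins=J_T(\ppr)=P(\ppr)$. The real divergence is in returning to (i): the paper's (iii)$\Rightarrow$(i) invokes the characterization from \cite[Thm.\,1.2]{kn} that $T$ is weak* mean ergodic iff the stationary normal functionals separate $\fix T$, and simply lifts a separating stationary functional for $\Tpr$ on $\cApr$ to one for $T$ on $\cA$ via $\Fi:=\tilde\Fi(\ppr\cdot\ppr)$. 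You instead prove (ii)$\Rightarrow$(i) constructively, identifying $\ppr P(\cdot)\ppr$ with the unique ergodic projection $\bar P$ of $\Tpr$ and writing $P=J_T\circ\bar P\circ(\ppr\cdot\ppr)$ as a composition of normal maps. This is more hands-on and avoids quoting the separation criterion, at the cost of the order-isomorphism argument for the normality of $J_T$ (which is fine: a positive bijection with positive inverse between operator systems that are closed under bounded monotone suprema preserves such suprema both ways). Your step (iii)$\Rightarrow$(ii), using subharmonicity of $\ppr$ to get $P(\ppr^\bot)\leq\ppr^\bot$ and then injectivity to kill $P(\ppr^\bot)$, is a pleasant argument that does not appear in the paper. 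Both approaches ultimately rest on the weak* mean ergodicity of $\Tpr$ established in \refprop{prop:posRecFixRaumvNA}.
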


In particular, the Poisson boundary $(\fix T,\diamond)$ of a weak* mean ergodic Markov operator $T$ can be faithfully represented on the  Hilbert space $\ppr\hrH$. More precisely, the von Neumann algebra $\ppr\fix T\ppr$ is a \emph{concrete realization} of $(\fix T,\diamond)$, as Izumi calls it (cf.\ \cite[Def.\,3.4]{izu}), if and only if $T$ is weak* mean ergodic.

\begin{prf}
	\begin{description}[font=\normalfont\mdseries\em]
		\item[\ref{item:wme:wme}$\folgt$\ref{item:wme:ptr_to_eins}:]
			Let $P$ be the (unique) normal ergodic projection for $T$. Then $\Fi\circ P$ is a stationary normal state (for $T$) for every $\Fi\in\Zst(\cA)$ and hence $\supp(\Fi\circ P)\leq\ppr$. Therefore, we have $\Fi(P(\ppr))=\Fi(P(\Eins))$ for all $\Fi\in\Zst(\cA)$, which implies $P(\ppr)=P(\Eins)=\Eins$.
		\item[\ref{item:wme:ptr_to_eins}$\folgt$\ref{item:wme:injective}:]
			By assumption we have $P(\ppr^\bot)=0$. Let $a\in\cA$ then the Kadison-Schwarz inequality implies
			\[
				0\leq P(a\,\ppr^\bot)^*P(a\,\ppr^\bot)\leq P(\ppr^\bot a^*a\,\ppr^\bot)\leq \norm{a}^2\,P(\ppr^\bot)=0.
			\]
			Hence $P(a\,\ppr^\bot)=0$ and, analogously, $P(\ppr^\bot a)=0$. It follows that for $x\in\fix T$
			\[
				x=P(x)=P\bigl(\ppr x\ppr + (\ppr x)\ppr^\bot + \ppr^\bot x\bigr)=P(\ppr x\ppr)
			\]
			and, therefore, the linear map $\fix T\ni x\mapsto\ppr x\ppr\in\ppr\fix T\ppr$ is injective.
		\item[\ref{item:wme:injective}$\folgt$\ref{item:wme:wme}:]
			We show that the stationary normal linear functionals separate the points of $\fix T$ which is equivalent to Condition \ref{item:wme:wme} by \cite[Thm.\,1.2]{kn}. Let $0\neq x\in\fix T$ then $0\neq\ppr x\ppr\in\fix{\Tpr}$ by assumption. Since $\Tpr$ is weak* mean ergodic, there is a normal linear functional $\tilde\Fi\in(\cApr)_*$ with $\tilde\Fi\circ\Tpr=\tilde\Fi$ and $\tilde\Fi(\ppr x\ppr)\neq0$.
			Let $\Fi:=\tilde\Fi(\ppr\,\cdot\,\ppr)\in\cA_*$ then
			\[
				\Fi(T(a))=\tilde\Fi(\ppr T(a)\ppr)\stackrel{\refKlammern{lem:charakSubharm}}{=}\tilde\Fi(\ppr T(\ppr a\,\ppr)\ppr)
				=\tilde\Fi(\ppr a\,\ppr)=\Fi(a)
			\]
			for all $a\in\cA$. Hence $\Fi$ is a stationary normal linear functional for $T$ and $\Fi(x)=\tilde\Fi(\ppr x\ppr)\neq0$.
		\item[\ref{item:wme:wme}$\folgt$\ref{item:wme:isom_effrosProd}:]
			If $T$ is weak* mean ergodic then there is a unique normal ergodic projection $P$. By \refcor{cor:ChoiEffrosProduct} it follows that $J_T:x\mapsto P(x)$ is a *-isomorphism from $\ppr\fix T\ppr$ to $(\fix T,\diamond)$.
		\item[\ref{item:wme:isom_effrosProd}$\folgt$\ref{item:wme:ptr_to_eins}:]
			Since $J_T$ is a *-isomorphism, it follows that $\Eins=J_T(\ppr)=P(\ppr)$.\qedhere
	\end{description}
\end{prf}

{\L}uczak has already proven the equivalence of \ref{item:wme:wme} and \ref{item:wme:ptr_to_eins} in \cite[Thm.\,5]{luc}, where he gave a different proof. The equivalence of \ref{item:wme:wme} and \ref{item:wme:injective} is implicitly contained in \cite{fv}.

As in the classical situation the operator $J_T$ defined in Condition \ref{item:wme:isom_effrosProd} extends elements of the Poisson boundary uniquely to an element of the fixed space of the Markov operator. Therefore, for a weak* mean ergodic Markov operator $T$ we may view
\[
	J_T:\, \ppr\fix T\ppr\to\fix T:\; x\mapsto P(x)=x+S(x)
\]
as a \emph{non-commutative} or \emph{abstract Poisson Integral}, where $S$ comes from the decomposition of the normal ergodic projection $P$ (cf.\ \refthm{thm:strukturVonProj}).

The following example shows that it is not enough to require the existence of an arbitrary *-iso\-mor\-phism from $\ppr\fix T\ppr$ to $(\fix T,\diamond)$ to ensure that a Markov operator $T$ is weak* mean ergodic, i.e.\ \refXX{Condition}{thm:WeakStarMeanErgodic}{.\ref*{item:wme:isom_effrosProd}} cannot be weakened in this way.

\begin{exmp}\label{exmp:IsomPoissonBoundaryReichtNicht}
	Let $\cA:=\ell^\infty(\ZZ)=\ell^\infty(-\NN)\oplus\ell^\infty(\NN_0)$ and $T_0$ the left shift on $\ell^\infty(\NN_0)$, i.e. $T_0(f)(n)=f(n+1)$ for $f\in\ell^\infty(\NN_0)$. For $x=x^-\oplus x^+\in\cA=\ell^\infty(-\NN)\oplus\ell^\infty(\NN_0)$ we define a Markov operator $T:\cA\to\cA$ by
	\[
		T(x)=T(x^-\oplus x^+)=x^-\oplus T_0(x^+).
	\]
	Then $\ppr=\ChFkt_{(-\NN)}$, since there are no normal stationary states for the shift $T_0$ (cf.\ \refrem{rem:shift_auf_ell_infty}). Furthermore, we have $\fix T=\ell^\infty(-\NN)\oplus\CC\cdot\Eins_{\ell^\infty(\NN_0)}$. Since $\fix T$ already is a subalgebra, the Choi-Effros product coincides with the usual one.
	
	Clearly, $\fix T$ is *-isomorphic to $\ell^\infty(-\NN)=\ppr\fix T\ppr$.
	But $T$ is not weak* mean ergodic, since $\varphi(0\oplus\Eins_{\ell^\infty(\NN_0)})=0$ for all stationary normal states $\varphi=\varphi\circ T$.
\end{exmp}

\begin{prop}\label{prop:ptr_potential_falls_T_wme}
	If $T:\cA\to\cA$ is a weak$^*$ mean ergodic Markov operator on a von Neumann algebra $\cA\subseteq\BH$ then $\pnr=0$, i.e. there is no null recurrent part.\\
	Furthermore, the maximal transient projection for $T$ is a potential, i.e.\ $\ptr\in\cApot$.
\end{prop}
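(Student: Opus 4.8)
The plan is to combine the normal ergodic projection supplied by weak$^*$ mean ergodicity with the Riesz decomposition theorem, applied to the superharmonic element $\ppr^\bot$, in order to show that $\ppr^\bot$ is itself a potential. Both assertions then follow at once.

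First I would fix the unique normal ergodic projection $P$ for $T$. Since $T$ is weak$^*$ mean ergodic, \refthm{thm:WeakStarMeanErgodic} (the implication \ref*{item:wme:wme}$\folgt$\ref*{item:wme:ptr_to_eins}) gives $P(\ppr)=\Eins$, hence $P(\ppr^\bot)=P(\Eins)-P(\ppr)=0$; and exactly as in the proof of the implication \ref*{item:wme:ptr_to_eins}$\folgt$\ref*{item:wme:injective} of that theorem, the Kadison--Schwarz inequality yields $P(\ppr^\bot a)=P(a\,\ppr^\bot)=0$ for every $a\in\cA$. Recall also that $P$ restricts to the identity on $\fix T=P(\cA)$.

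Since $\ppr$ is subharmonic (\refrem{rem:pRsubharm}), the projection $\ppr^\bot$ is superharmonic, so by the Riesz decomposition theorem \refthmX{thm:RieszDecompThm}{.\ref*{item:RieszDecompThm:zerlegung}} there are $y\in\cApot$ and $0\leq h\in\fix T$ with $\ppr^\bot=y+h$. The crucial point is that $h=0$: from $y\geq0$ we obtain $0\leq h\leq\ppr^\bot$, hence $\supp h\leq\ppr^\bot$ and therefore $h=\ppr^\bot h\,\ppr^\bot$; since $h$ is a fixed point of $T$, applying $P$ and using the previous paragraph gives $h=P(h)=P(\ppr^\bot h\,\ppr^\bot)=0$. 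Thus $\ppr^\bot=y$ is a potential, and since $\ptr$ is the supremum of the support projections of all potentials while $\supp\ppr^\bot=\ppr^\bot$, we get $\ppr^\bot\leq\ptr$. Together with $\ptr\leq\ppr^\bot$ from \refpropX{prop:uma05_rec_orth_trans}{.\ref*{item:uma05_r_t:ppr_rec}}, this yields $\ptr=\ppr^\bot\in\cApot$ --- the second assertion --- and consequently $\pnr=\Eins-(\ptr+\ppr)=\ppr^\bot-\ptr=0$.

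I expect the only delicate step to be the vanishing of the fixed part $h$; everything else is bookkeeping with already-established facts. It is precisely here that weak$^*$ mean ergodicity is used essentially: through $P(\ppr)=\Eins$ (equivalently, normality of the ergodic projection) the map $P$ annihilates every element sandwiched by $\ppr^\bot$, in particular the $T$-fixed element $h\leq\ppr^\bot$.
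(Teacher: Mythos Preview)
Your proof is correct and follows essentially the same approach as the paper: apply the Riesz decomposition to the superharmonic element $\ppr^\bot$ and use $P(\ppr^\bot)=0$ to kill the fixed part $h$. The paper's argument for $h=0$ is slightly more direct---from $0=P(\ppr^\bot)=P(y)+P(h)=P(y)+h$ with $P(y)\geq0$ and $h\geq0$ one immediately gets $h=0$---so your detour through $h=\ppr^\bot h\,\ppr^\bot$ and Kadison--Schwarz is unnecessary, but the route is the same.
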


\begin{prf}
	Since $\ppr^\bot$ is superharmonic (cf.\ \refthm{thm:superharm_complete_lattice}), there are elements $y\in\cApot$ and $0\leq h\in\fix T$ such that $\ppr^\bot=y+h$ by the Riesz decomposition theorem \refKlammern{thm:RieszDecompThm}.

	Let $P:\cA\to\cA$ be the normal ergodic projection for $T$. Then $P(\ppr)=\Eins$ by \refthmX{thm:WeakStarMeanErgodic}{.\ref*{item:wme:ptr_to_eins}} and hence
	\[
		0=P(\ppr^\bot)=P(y+h)=P(y)+h\geq 0.
	\]
	This yields $h=0$ and thus $\ppr^\bot=y\in\cApot$. Therefore, $\ppr^\bot$ is transient, hence $\pnr=0$ and $\ptr=\ppr^\bot$.
\end{prf}

Using the Riesz decomposition theorem \refKlammern{thm:RieszDecompThm} and \refcor{cor:potential_dominierter_fixpkt_verschwindet} we conclude that the support projection of every potential for a weak$^*$ mean ergodic Markov operator is a potential, too.

\section{Entanglement Breaking Channels}\label{sec:eb}

As another application of our theory we consider entanglement breaking channels (see \cite{hol}, \cite{hsr}, \cite{rus}, or \cite{arv}).

\begin{defn}
	Let $\cA\subseteq\BH$ be a von Neumann algebra and $T:\cA\to\cA$ a Markov operator. If there are $\Fi_1,\ldots,\Fi_n\in\Zst(\cA)$ and $a_1,\ldots,a_n\in\cA_+$ with $\sum_{i=1}^n{a_i}=\Eins$ such that for all $x\in\cA$
	\begin{align*}
		T(x)=\sum\nolimits_{i=1}^n{\Fi_i(x)a_i}  \TAG{eqn:HolevoRep}
	\end{align*}
	then we call \refKlammern{eqn:HolevoRep} a \emph{Holevo representation} for $T$. 
\end{defn}

Horodecki, Shor, and Ruskai showed in \cite{hsr} that if $\hrH$ is finite dimensional and $T:\BH\to\BH$ a Markov operator then $T$ has a Holevo representation if and only if $\rho\circ (T\otimes\id)$ is a \emph{separable} state (i.e.\ a convex combination of product states) for every state $\rho$ on $\hrB(\hrH\otimes\hrH)$. For this reason such maps are also called entanglement breaking.

Note that being of finite rank, a Markov operator $T:\cA\to\cA$ with Holevo representation is automatically weak* mean ergodic.

\begin{thm}\label{thm:EB:projImFixraumKomm}
	Let $T:\cA\to\cA$ be a Markov operator with Holevo representation. Then any two orthogonal projections in the fixed space $\fix T$ commute.\\
	Moreover, $\ppr\fix T\ppr$ is a finite dimensional commutative von Neumann subalgebra of $\cApr=\ppr\cA\ppr$.
\end{thm}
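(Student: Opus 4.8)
The plan is to prove the ``moreover'' part first---that $\ppr\fix T\ppr$ is a finite dimensional commutative von Neumann algebra---and then to deduce the commutation of two orthogonal projections in $\fix T$ from it together with the structure theorem \refthm{thm:strukturVonProj}. The easy observations are quickly recorded: the Holevo representation $T(x)=\sum_{i=1}^{n}\Fi_i(x)a_i$ exhibits $T$ as a finite rank operator, so $\fix T\subseteq\Span\set{a_1,\dots,a_n}$ is finite dimensional and, as remarked before the theorem, $T$ is weak* mean ergodic; by \refprop{prop:posRecFixRaumvNA} the compression $\Tpr\colon\cApr\to\cApr,\ x\mapsto\ppr T(x)\ppr$, is again weak* mean ergodic and $\ppr\fix T\ppr=\fix\Tpr$ is a finite dimensional von Neumann subalgebra of $\cApr$. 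Everything then hinges on its commutativity. For this I would first note that $\Tpr$ itself has a Holevo representation: for $x\in\cApr$ one has $\Tpr(x)=\ppr T(x)\ppr=\sum_{i}\Fi_i(x)\,\ppr a_i\ppr$, and since $\Fi_i(\ppr)=0$ forces $\Fi_i|_{\cApr}=0$ by Cauchy--Schwarz, normalising the surviving terms gives a Holevo representation $\Tpr(x)=\sum_j\omega_j(x)c_j$ with $\omega_j\in\Zst(\cApr)$, $c_j\in\cApr_+$, $\sum_jc_j=\ppr$. Hence each $\Tpr^{n}$ ($n\ge1$) and each Ces\`aro average $\tfrac1N\sum_{n=1}^{N}\Tpr^{n}$ carries a Holevo representation built from the \emph{same} states $\omega_j$, with uniformly norm-bounded prepared operators. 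Letting $E$ be the normal ergodic projection of $\Tpr$ (the pointwise weak*-limit of these averages), a weak*-cluster point of the accompanying operators---the ball of $\cApr$ being weak*-compact and only finitely many $j$ occurring---produces a Holevo representation $E(x)=\sum_j\omega_j(x)d_j$; replacing $d_j$ by $E(d_j)\in E(\cApr)=\fix\Tpr$ we may assume $d_j\in\cB:=\fix\Tpr$, so that $b=E(b)=\sum_j\omega_j(b)\,d_j$ for every $b\in\cB$ with all $d_j\in\cB$.

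Now I would argue by contradiction that $\cB$ is commutative. If not, being finite dimensional it contains a projection $r$ with $r\cB r\cong M_2(\CC)$, and compressing the reproducing identity by $r$ gives $b=\sum_j\omega_j(b)\,rd_jr$ for all $b$ in $\cM:=r\cB r$, with $rd_jr\ge0$ in $\cM$ and $\omega_j|_{\cM}$ positive. For a rank one projection $P_v\in\cM$ (onto a unit vector $v$), the relation $P_v=\sum_j\omega_j(P_v)\,rd_jr$ forces $\omega_j(P_v)\,rd_jr\le P_v$, so $rd_jr$ is a non-negative multiple of $P_v$ whenever $\omega_j(P_v)>0$; as distinct rank one projections in $M_2(\CC)$ are never proportional, for each $j$ with $rd_jr\ne0$ there is at most one rank one projection $P_{w_j}$ with $\omega_j(P_{w_j})>0$. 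Then $\omega_j$ annihilates the rank one projection orthogonal to $P_{w_j}$, hence $\omega_j=\lambda_j\skal{\,\cdot\,w_j}{w_j}$ for some $\lambda_j\ge0$, and testing against any rank one projection different from $P_{w_j}$ forces $\lambda_j=0$. So each term $\omega_j(b)\,rd_jr$ vanishes, whence $b=0$ for all $b\in\cM$, contradicting $r\ne0$. Therefore $\ppr\fix T\ppr$ is commutative, which settles the ``moreover'' assertion.

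To deduce that two orthogonal projections $p,q\in\fix T$ commute, I would pass to the normal ergodic projection $P$ of $T$; this is an idempotent Markov operator with $\ppr(P)=\ppr(T)$ by \reflem{lem:pprT=pprP}, so \refthm{thm:strukturVonProj} and \refcor{cor:idempotentMarkovOp} apply. Writing $P(x)=Q(\ppr x\ppr)+S(Q(\ppr x\ppr))$ with $S\colon\ppr\fix T\ppr\to\cAtr$ unital completely positive, and using $P(p)=p$ and $\ppr+\ptr=\Eins$, one obtains $p=\ppr p\ppr+\ptr p\ptr$ where both summands are honest orthogonal projections and $\ptr p\ptr=S(\ppr p\ppr)$, and similarly for $q$. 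The projections $\ppr p\ppr$ and $\ppr q\ppr$ commute by the first part; moreover, since $S(\ppr p\ppr)$ and $S(\ppr q\ppr)$ are again projections, $\ppr p\ppr$ and $\ppr q\ppr$ lie in the multiplicative domain of the $2$-positive map $S$, so $S(\ppr p\ppr)S(\ppr q\ppr)=S(\ppr p\ppr\,\ppr q\ppr)=S(\ppr q\ppr\,\ppr p\ppr)=S(\ppr q\ppr)S(\ppr p\ppr)$. Adding the two mutually orthogonal blocks gives $pq=\ppr p\ppr\,\ppr q\ppr+S(\ppr p\ppr)S(\ppr q\ppr)=\ppr q\ppr\,\ppr p\ppr+S(\ppr q\ppr)S(\ppr p\ppr)=qp$. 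The main obstacle throughout is the commutativity of $\ppr\fix T\ppr$: one must upgrade the Holevo representation of $T$ (via $\Tpr$) to one of the \emph{normal ergodic projection} whose prepared operators already lie in $\fix\Tpr$, and then rule out a $2\times2$ matrix block by the elementary but slightly delicate fact that the identity map of $M_2(\CC)$ admits no ``measure-and-prepare'' presentation; the rest is bookkeeping on top of the Riesz decomposition and structure theorems established earlier.
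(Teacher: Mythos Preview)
Your proof is correct, but it takes a substantially more elaborate route than the paper's. The paper argues in the opposite order and far more directly: given a projection $p\in\fix T$, from $p=T(p)=\sum_{i\in J_p}\Fi_i(p)a_i$ with $J_p:=\{i:\Fi_i(p)\neq0\}$ one gets $\supp a_i\le p$, hence $pa_i=a_i=a_ip$ for $i\in J_p$; then $\sum_{i\in J_p}a_i\le p=\sum_{i\in J_p}\Fi_i(p)a_i$ forces $\Fi_i(p)=1$, so $p=\sum_{i\in J_p}a_i$, and the same for $p^\bot$ shows $p$ commutes with \emph{every} $a_i$. Thus any two fixed projections commute immediately, and the ``moreover'' part follows because $\Tpr$ inherits a Holevo representation and a von Neumann algebra is generated by its projections. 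Your approach instead builds a Holevo representation for the normal ergodic projection of $\Tpr$, rules out a $2\times2$ block via the (true and nicely argued) fact that the identity on $M_2(\CC)$ has no measure--and--prepare form, and then invokes \refthm{thm:strukturVonProj} together with the multiplicative-domain trick to transport commutativity from $\ppr\fix T\ppr$ back to projections in $\fix T$. This is conceptually appealing---it makes explicit the link to ``entanglement breaking'' of the ergodic projection---but it uses much heavier machinery and loses the structural bonus the paper obtains for free, namely that every fixed projection is literally a partial sum $\sum_{i\in J_p}a_i$ of the Holevo operators.
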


Arbitrary elements of the fixed space $\fix T$ do not need to commute (see \refexmp{exmp:eb} below) even though $\fix T$ is a commutative von Neumann algebra w.r.t.\ the Choi-Effros product, being isomorphic to $\ppr\fix T\ppr$ (cf.\ \refthm{thm:WeakStarMeanErgodic}).

\begin{prf}
	As first step we show that for $p^*=p^2=p\in\fix T$ there is a subset $J_p\subseteq\set{1,\ldots,n}$ such that $p=\sum\nolimits_{i\in J_p}{a_i}$ and $pa_i=a_ip$ for all $1\leq i\leq n$. The first assertion is an immediate consequence of this fact.

	Define $J_p:=\Set{1\leq i\leq n}{\Fi_i(p)\neq 0}$ then $p=T(p)=\sum_{i\in J_p}{\Fi_i(p)a_i}$. Hence $a_i\leq \supp a_i\leq p$ and thus $pa_i=a_i=a_ip$ for $i\in J_p$. Together with $\sum_{i=1}^n a_i=\Eins$ this implies $\sum_{i\in J_p}{a_i}\leq p = \sum_{i\in J_p}{\Fi_i(p)a_i}$ from which $\Fi_i(p)=1$ for $i\in J_p$ follows, i.e. $p= \sum_{i\in J_p}{a_i}$. 
	Setting $J_{p^\bot}:=\Set{1\leq i\leq n}{\Fi_i(p)= 0}$ it follows that $p^\bot= \sum_{i\in J_{p^\bot}}{a_i}$. Hence $p$ also commutes with all $a_i$, $i\in J_{p^\bot}$.

	Let $\Tpr:\cApr\to\cApr:x\mapsto\ppr T(x)\ppr$ 
	as in \refprop{prop:posRecFixRaumvNA}. Then $\Tpr(x)=\ppr T(x)\ppr = \sum\nolimits_{i=1}^n{\Fi_i(x)\ppr a_i\ppr}$ for $x\in\cApr$. Hence $\Tpr$ has a Holevo representation, too, and any two orthogonal projections of $\fix{\Tpr}$ commute. Since $\fix{\Tpr}=\ppr\fix T\ppr$ is a von Neumann subalgebra of $\cApr$ and since a von Neumann algebra is generated by its projections, this completes the proof.
\end{prf}

\begin{exmp}\label{exmp:eb}
	Let $\cA:=M_5(\CC)$ and let  $(e_{ij})_{1\leq i,j\leq5}$ be the canonical system of matrix units. Set $\Fi_1:=\tr(e_{11}\,\cdot\,)$, $\Fi_2:=\tr(e_{22}\,\cdot\,)$, and $\Fi_3:=\tr(e_{33}\,\cdot\,)$, as well as
	\[
		a_1:=\frac14\mbox{\begin{footnotesize}$\left(\begin{matrix} 4 & 0 & 0 & 0 & 0\\0 & 0 & 0 & 0 & 0\\0 & 0 & 0 & 0 & 0 \\0 & 0 & 0 & 2 & 1\\0 & 0 & 0 & 1 & 1\end{matrix}\right)$,\end{footnotesize}}\quad
		a_2:=\frac14\mbox{\begin{footnotesize}$\left(\begin{matrix} 0 & 0 & 0 & 0 & 0\\0 & 4 & 0 & 0 & 0\\0 & 0 & 0 & 0 & 0\\0 & 0 & 0 & 2 & -1\\0 & 0 & 0 & -1 & 1\end{matrix}\right)$,\end{footnotesize}}\quad
		a_3:=\frac14\mbox{\begin{footnotesize}$\left(\begin{matrix} 0 & 0 & 0 & 0 & 0\\0 & 0 & 0 & 0 & 0\\0 & 0 & 4 & 0 & 0\\0 & 0 & 0 & 0 & 0\\0 & 0 & 0 & 0 & 2 \end{matrix}\right)$\end{footnotesize}}.
	\]
	Then $P:M_5(\CC)\to M_5(\CC):\,x\mapsto \Fi_1(x)a_1+\Fi_2(x)a_2+\Fi_3(x)a_3$ is an idempotent Markov operator in Holevo representation. Note that $\fix P=P(\cA)=\Span\set{a_1,a_2,a_3}$ contains no orthogonal projections except for $0$ and $\Eins$. Obviously, the states $\Fi_1$, $\Fi_2$, and $\Fi_3$ are stationary and $\ppr=\bigvee_{i=1}^3\supp\Fi_i=e_{11}+e_{22}+e_{33}$. Hence as an algebra $\ppr\fix P\ppr\cong\CC^3$ but neither $a_1$ and $a_2$, nor $a_1$ and $a_3$, nor $a_2$ and $a_3$ do commute. However, the Choi-Effros product turns the elements $a_1$, $a_2$, and $a_3$ into orthogonal projections which are mutually orthogonal.
\end{exmp}

For a Markov operator which is idempotent we have a converse of \refthm{thm:EB:projImFixraumKomm}.

\begin{thm}\label{thm:EB:PmitKommutativerUnteralg}
	Let $\cA\subseteq\BH$ be a von Neumann algebra and $P^2=P:\cA\to\cA$ a Markov operator such that $\ppr P(\cA)\ppr$ is finite dimensional and commutative.\\
	Then $P$ has a Holevo representation and $a_1,\ldots,a_n\in\cA_+$ and $\Fi_1,\ldots,\Fi_n\in\Zst(\cA)$ can be chosen linearly independent.
\end{thm}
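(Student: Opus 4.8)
The plan is to feed the structure theorem for idempotent Markov operators (\refthm{thm:strukturVonProj} together with \refcor{cor:idempotentMarkovOp}) into the hypothesis that $\ppr P(\cA)\ppr$ is finite dimensional and commutative. Observe first that $\ppr\neq0$, since $\psi\circ P$ is a stationary normal state for any $\psi\in\Zst(\cA)$. If $\ppr=\Eins$ then $P$ is a faithful normal conditional expectation onto $P(\cA)=\ppr P(\cA)\ppr$ and the argument below applies with the map $S$ absent; so assume $\ppr\neq\Eins$ and write $P(x)=Q(\ppr x\ppr)+S(Q(\ppr x\ppr))$ as in \refthm{thm:strukturVonProj}, where $Q:\cApr\to\cApr$ is a faithful normal conditional expectation onto $Q(\cApr)=\ppr P(\cA)\ppr$, the map $S:Q(\cApr)\to\cAtr$ is unital, normal and completely positive, and $\ppr+\ptr=\Eins$.

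The first step is to diagonalize $Q(\cApr)$: being finite dimensional and commutative it has minimal projections $q_1,\dots,q_n$ with $q_iq_j=\delta_{ij}q_i$ and $\sum_{i=1}^n q_i=\ppr$. Under the identification $Q(\cApr)\cong\CC^n$ the $i$-th coordinate state $\omega_i$ satisfies $q_iz=\omega_i(z)\,q_i$ for all $z\in Q(\cApr)$, and $z=\sum_i\omega_i(z)\,q_i$. I then set
\[
	\Fi_i:=\omega_i\circ Q(\ppr\,\cdot\,\ppr),\qquad a_i:=q_i+S(q_i)=P(q_i),
\]
where the last equality is part~(e) of \refcor{cor:idempotentMarkovOp}. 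Here $\Fi_i\in\Zst(\cA)$ because $Q(\ppr\,\cdot\,\ppr)$ is normal, positive and unital with $Q(\ppr)=\ppr$ and $\omega_i(\ppr)=1$, while $a_i\in\cA_+$ because it is the block-diagonal sum of the positive elements $q_i\in\ppr\cA\ppr$ and $S(q_i)\in\ptr\cA\ptr$. Substituting $Q(\ppr x\ppr)=\sum_i\Fi_i(x)\,q_i$ into the structure formula and using linearity, normality and unitality of $S$ together with $\ppr+\ptr=\Eins$ gives
\[
	P(x)=\sum_{i=1}^n\Fi_i(x)\,q_i+S\Bigl(\sum_{i=1}^n\Fi_i(x)\,q_i\Bigr)=\sum_{i=1}^n\Fi_i(x)\,a_i,\qquad\sum_{i=1}^n a_i=\ppr+S(\ppr)=\Eins,
\]
which is a Holevo representation of $P$.

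For the linear independence I evaluate both families on the $q_j$. Since $Q$ restricts to the identity on $Q(\cApr)$ and $q_j\leq\ppr$, one has $\Fi_i(q_j)=\omega_i(Q(q_j))=\omega_i(q_j)=\delta_{ij}$, so the $\Fi_i$ are linearly independent. Likewise $\ppr a_i\ppr=q_i$ and the mutual orthogonality of the $q_i$ show that $\sum_i c_i a_i=0$ implies $\sum_i c_i q_i=0$, hence $c_i=0$ for all $i$; thus the $a_i$ are linearly independent as well.

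I do not anticipate a genuine obstacle: the real work is already done in \refthm{thm:strukturVonProj}, and what remains is bookkeeping. The only points deserving a little care are the verification that $\Fi_i$ is a normal state, the positivity of $a_i$ (immediate from the block structure relative to $\ppr,\ptr$), and the separate treatment of the degenerate case $\ppr=\Eins$, where $P=Q$ and one simply takes $a_i=q_i$.
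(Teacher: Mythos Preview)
Your proof is correct and follows essentially the same route as the paper: both diagonalize the finite dimensional commutative algebra $Q(\cApr)=\ppr P(\cA)\ppr$ via its minimal projections, define the states $\Fi_i$ through the coordinate functionals composed with $Q(\ppr\,\cdot\,\ppr)$ (the paper writes this as $p_iP(x)p_i=\Fi_i(x)p_i$, which amounts to the same thing), and set $a_i=P(q_i)=q_i+S(q_i)$. Your linear independence argument via $\Fi_i(q_j)=\delta_{ij}$ is a minor variant of the paper's observation that $\supp\Fi_i\leq p_i$, and your explicit treatment of the case $\ppr=\Eins$ is a small bonus.
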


\begin{prf}
	Let $P$ have a decomposition as in \refthm{thm:strukturVonProj} and let $p_1,\ldots,p_n$ be a family of minimal mutually orthogonal projections generating $\ppr P(\cA)\ppr=Q(\cApr)$. Then for each $1\leq i\leq n$ there is is normal state $\Fi_i\in\Zst(\cA)$ such that $p_i Q(\ppr x\ppr)p_i=p_i P(x)p_i= \Fi_i(x)p_i$ for all $x\in\cA$.
	
	Moreover, $\Fi_i(x)p_i=p_i Q(\ppr x\ppr)p_i=p_i Q(p_i x p_i)p_i=\Fi_i(p_i x p_i)p_i$ for all $x\in\cA$, because $Q$ is a conditional expectation. Hence $\supp\Fi_i\leq p_i\leq\ppr$, in particular, $\Fi_1,\ldots,\Fi_n$ are linearly independent. Furthermore, for $x\in\cApr$ we have $Q(x)=\ppr Q(x)=\sum_{i=1}^n{p_i Q(x)}=\sum_{i=1}^n{p_i Q(x)p_i}=\sum_{i=1}^n{\Fi_i(x)p_i}$.
	
	Let $b_i:=S(Q(p_i))=S(p_i)$, where $S$ comes from the decomposition of $P$, and set $a_i:=p_i+b_i=P(p_i)$. Obviously, $a_1,\ldots,a_n$ are linearly independent and
	\begin{align*}
		P(x) & = Q(\ppr x\ppr)+ S(Q(\ppr x\ppr))= \sum_{i=1}^n{\Fi_i(\ppr x\ppr)p_i}+ S\biggl(\sum_{i=1}^n{\Fi_i(\ppr x\ppr)p_i}\biggr) \\
		& = \sum_{i=1}^n{\Fi_i(x)p_i}+ \sum_{i=1}^n{\Fi_i(x)S(p_i)}= \sum_{i=1}^n{\Fi_i(x)(p_i+b_i)}= \sum_{i=1}^n{\Fi_i(x)a_i}.\qedhere
	\end{align*}
\end{prf}

If $P$ is already a conditional expectation such that $P(\cA)$ is commutative then $P\otimes\id$ projects onto $P(\cA)\otimes M_n$. Since every state on this algebra is separable, such a $P$ is entanglement breaking and thus has a Holevo representation. In general, however, $P(\cA)$ is not necessarily contained in a commutative subalgebra as \refexmp{exmp:eb} shows.

The naturally arising question is whether such a representation is unique. The following proposition is answering this question affirmatively.

\begin{prop}
	Let $P^2=P:\cA\to\cA$ be a Markov operator which admits a Holevo representation $P(x)=\sum\nolimits_{i=1}^n{\Fi_i(x)a_i}$ with  linearly independent normal states $\Fi_i$ and linearly independent positive elements $a_i$ $(1\leq i\leq n)$. Then $a_1,\ldots,a_n\in\cA_+$ and $\Fi_1,\ldots,\Fi_n\in\Zst(\cA)$ are uniquely determined up to permutation.
\end{prop}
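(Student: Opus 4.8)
The plan is to use the structure theorem for idempotent Markov operators to transport the two representations to the finite-dimensional commutative von Neumann algebra $\ppr P(\cA)\ppr$, and there to invoke the classical fact that an entrywise nonnegative matrix whose inverse is again nonnegative must be a monomial matrix.

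First I would record what a Holevo representation $P(x)=\sum_{i=1}^n\Fi_i(x)a_i$ with linearly independent data forces. Since the $\Fi_i$ are linearly independent there are $x_1,\dots,x_n\in\cA$ with $\Fi_j(x_i)=\delta_{ij}$, so $a_i=P(x_i)\in P(\cA)=\fix P$; writing $(\ell_i)$ for the basis of $(\fix P)^*$ dual to $(a_j)$ we get $\Fi_i=\ell_i\circ P$, hence $\Fi_i(a_j)=\ell_i(a_j)=\delta_{ij}$ and $\Fi_i\circ P=\Fi_i$. Thus each $\Fi_i$ is a stationary normal state with $\supp\Fi_i\leq\ppr$, and $(a_i)$ is a linear basis of $\fix P$; in particular $n=\dim\fix P$ is the same for every such representation, and once the $a_i$ are pinned down up to permutation so are the $\Fi_i=\ell_i\circ P$. (If $\ppr=\Eins$ then $P$ is a faithful normal conditional expectation and $\cAtr=0$; below this is the degenerate case $S=0$, $Q=P$, $Q(\cApr)=\fix P$.)

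Next, by \refthm{thm:strukturVonProj} and \refcor{cor:idempotentMarkovOp} we have $\ppr+\ptr=\Eins$, the map $\fix P\to Q(\cApr)=\ppr P(\cA)\ppr,\ y\mapsto\ppr y\ppr$, is a linear bijection with inverse $z\mapsto z+S(z)$, and $S(z)=\ptr P(z)\ptr$. Put $q_i:=\ppr a_i\ppr\in Q(\cApr)$. Then $\sum_i q_i=\ppr$, each $q_i\geq0$, and $a_i=P(q_i)=q_i+S(q_i)$, so $(q_i)$ is a basis of $Q(\cApr)$. The restrictions $\lambda_i:=\Fi_i|_{Q(\cApr)}$ are states on $Q(\cApr)$, since $\lambda_i(\ppr)=\Fi_i(\ppr)=1$ (because $\supp\Fi_i\leq\ppr$); moreover $\lambda_i(q_j)=\Fi_i(\ppr a_j\ppr)=\Fi_i(a_j)=\delta_{ij}$, so $(\lambda_i)$ is the basis of $Q(\cApr)^*$ dual to $(q_i)$. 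By \refthm{thm:EB:projImFixraumKomm}, $Q(\cApr)$ is finite-dimensional and commutative, hence $*$-isomorphic to $\CC^n$ with minimal projections $p_1,\dots,p_n$.

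The core step is then purely combinatorial. Write $q_j=\sum_k A_{jk}p_k$ with $A_{jk}\geq0$ and $\sum_j A_{jk}=1$ (from $\sum_j q_j=\ppr$), and write the state $\lambda_i$ as $z=\sum_k z_kp_k\mapsto\sum_k B_{ik}z_k$ with $B_{ik}\geq0$ and $\sum_k B_{ik}=1$. The duality $\lambda_i(q_j)=\delta_{ij}$ reads $BA^{\mathrm T}=I$, so $A^{\mathrm T}$ is invertible with $A^{\mathrm T}$ and its inverse $B$ both entrywise nonnegative; hence $A^{\mathrm T}$ maps the cone $\RR^n_{\geq0}$ onto itself, permutes its extreme rays, and is therefore monomial, and together with $\sum_j A_{jk}=1$ this forces $A$ to be a permutation matrix. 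Consequently $q_i=p_{\sigma(i)}$ for some permutation $\sigma$. Since the minimal projections of $Q(\cApr)=\ppr P(\cA)\ppr$, the map $S$, and $P$ itself are all intrinsic to $P$, it follows that $a_i=q_i+S(q_i)=p_{\sigma(i)}+S(p_{\sigma(i)})$ and $\Fi_i=\ell_i\circ P$ are determined by $P$ up to the common permutation $\sigma$. I expect the only non-routine point to be recognising that linear independence of both families $(a_i)$ and $(\Fi_i)$ is exactly what makes $B$ invertible with nonnegative inverse, so that the monomial-matrix alternative applies; everything else is bookkeeping with the structure theorem.
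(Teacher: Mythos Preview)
Your argument is correct, but it takes a different route from the paper's. Both proofs first establish $\Fi_i(a_j)=\delta_{ij}$; you do this via a dual-basis argument, the paper by expanding $P=P^2$ against the linear independence of the $a_i$ and then of the $\Fi_i$. From that point the strategies diverge. The paper works directly with the support projections: from $a_i\leq\Eins$ and $\Fi_i(a_i)=1$ one gets $p_i:=\supp\Fi_i\leq a_i$, hence $\Fi_j(p_i)\leq\Fi_j(a_i)=0$ for $j\neq i$, so the $p_i$ are mutually orthogonal and sum to $\ppr$; then $\ppr a_i\ppr=p_i$ and $a_i=p_i+S(p_i)$ follow at once. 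In particular the paper never invokes \refthm{thm:EB:projImFixraumKomm} or any external combinatorial lemma. You instead push everything through the structure theorem to $Q(\cApr)\cong\CC^n$ and then appeal to the classical fact that a nonnegative matrix with nonnegative inverse is monomial (together with column-stochasticity) to force $q_i=p_{\sigma(i)}$. This works, and it is a nice illustration of how the structure theory packages the problem, but it is heavier machinery than needed: the paper's three-line support-projection argument replaces both the commutativity input from \refthm{thm:EB:projImFixraumKomm} and the monomial-matrix step.
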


\begin{prf}
For all $x\in\cA$ we have
\begin{align*}
	\sum_{i=1}^n{\Fi_i(x)a_i} & = P(x) = P^2(x) = \sum_{i=1}^n{\Fi_i(P(x))a_i} \\
	& = \sum_{i=1}^n{\Fi_i\biggl(\sum_{j=1}^n{\Fi_j(x)a_j}\biggr)a_i} = \sum_{i=1}^n{\biggl(\sum_{j=1}^n{\Fi_j(x)\Fi_i(a_j)}\biggr)a_i}.
\end{align*}

Since $a_1,\ldots,a_n$ are linearly independent, for every $i\leq n$ and $x\in\cA$ this yields  $\Fi_i(x)=\sum_{j=1}^n{\Fi_j(x)\Fi_i(a_j)}$, or, equivalently,
\[
	\left(\Fi_i(a_i)-1\right)\Fi_i(x)+\sum_{j\neq i}{\Fi_i(a_j)\Fi_j(x)} = 0.
\]
Now the linear independence of $\Fi_1,\ldots,\Fi_n$ implies $\Fi_i(a_j)=\delta_{ij}$ (Kronecker delta). It follows easily that $P(a_i)=a_i$ and $\Fi_i\circ P=\Fi_i$ for every $1\leq i\leq n$.
On the other hand, every stationary state is a linear combination of $\Fi_1,\ldots,\Fi_n$. Therefore, we have $\ppr=\bigvee_{i=1}^n\supp\Fi_i$.

Let $p_i:=\supp\Fi_i$, then $p_i\leq a_i$ and thus $\Fi_j(p_i)\leq\Fi_j(a_i)=0$ for $i\neq j$. Therefore, we have $p_i\leq a_i\leq p_j^\bot$ for all $1\leq i,j\leq n$ with $i\neq j$, i.e.\ the support projections of $\Fi_1,\ldots,\Fi_n$ are mutually orthogonal and $\ppr=\sum_{j=1}^n p_j$.

Clearly, $p_1,\ldots,p_n$ commute and linearly span $\ppr P(\cA)\ppr$. Hence we have shown that $p_i=\ppr a_i\ppr$ and thus $a_i=P(a_i)=Q(\ppr a_i\ppr)+S(Q(\ppr a_i\ppr))=p_i+S(p_i)$ for all $i\leq n$. So $a_i$ has the same form as in the proof of \refthm{thm:EB:PmitKommutativerUnteralg} which completes the proof.\qedhere
\end{prf}

\section*{Acknowledgment}\vspace*{-2ex}
We gratefully acknowledge a discussion with Masaki Izumi, who kindly sent us a preprint version of his review on $E_0$-semigroups dedicated to the memory of W. Arveson \cite{izudial}, and pointed to us Arveson's interpretation of the Choi-Effros product (cf.\ \refcor{cor:ChoiEffrosProduct}). We are also indebted to Florian Haag, who agreed that we included \refthm{thm:FloRecurrent}, a result he obtained in his Master studies supervised by the second author. Finally, our thanks go to Hans Maassen for his careful reading of a version of this paper.

\label{LastPage}

\end{document}